\newcommand{\N}{\mathbb{N}}
\newcommand{\Z}{\mathbb{Z}}
\newcommand{\R}{\mathbb{R}}
\newcommand{\E}{\mathbb{E}}
\newcommand{\dd}{\,\mathrm{d}}
\newcommand{\e}{{\mathrm{e}}}
\newcommand{\PP}{\mathbb{P}}
\newcommand{\F}{\mathcal{F}}
\newcommand{\re}{\!\restriction}
\newcommand{\Ev}{\mathcal{E}}
\DeclareMathOperator{\supp}{supp}
\numberwithin{equation}{section}
\numberwithin{figure}{section}
\newtheorem{theorem}{Theorem}[section]
\newtheorem{corollary}[theorem]{Corollary}
\newtheorem{lemma}[theorem]{Lemma}
\newtheorem{proposition}[theorem]{Proposition}
\theoremstyle{remark}
\newtheorem{remark}[theorem]{Remark}
\theoremstyle{definition}
\begin{document}

\title{Scaling limits for fractional polyharmonic Gaussian fields}

\author[N. De Nitti]{Nicola De Nitti}
\address[N. De Nitti]{EPFL, Institut de Mathématiques, Station 8, 1015 Lausanne, Switzerland.
 \emph{Email address: \texttt{nicola.denitti@epfl.ch}}.}

\author[F. Schweiger]{Florian Schweiger}
\address[F. Schweiger]{Weizmann Institute of Science, Department of Mathematics, 7610001 Rehovot, Israel.\\
Current address: Université de Genève, Section de mathématiques, Rue du Conseil 
G\'{e}n\'{e}ral 7--9, 1205 Gen\`{e}ve, Switzerland.\\
\emph{Email address: \texttt{florian.schweiger@unige.ch}}.}

%\date{\today}

\begin{abstract}
This work is concerned with fractional Gaussian fields, i.e. Gaussian fields whose covariance operator is given by the inverse fractional Laplacian $(-\Delta)^{-s}$ (where, in particular, we include the case $s >1$). We define a lattice discretization of these fields and show that their scaling limits -- with respect to the optimal Besov space topology (up to an endpoint case) -- are the original continuous fields. As a byproduct, in dimension $d<2s$, we prove the convergence in distribution of the maximum of the fields. A key tool in the proof is a sharp error estimate for the natural finite difference scheme for $(-\Delta)^s$ under minimal regularity assumptions, which is also of independent interest.
\end{abstract}

\subjclass[2020]{60G15, 35R11, 31B30, 65N06, 60G60}
\keywords{Polyharmonic fractional Laplacian, Gaussian interface model, scaling limit, finite difference scheme, Besov spaces}

\maketitle
\tableofcontents
\section{Introduction}
\label{sec:intro}
\begin{figure}
		\subfloat[$s=0$ (white noise)]{\includegraphics[width = 0.3\textwidth]{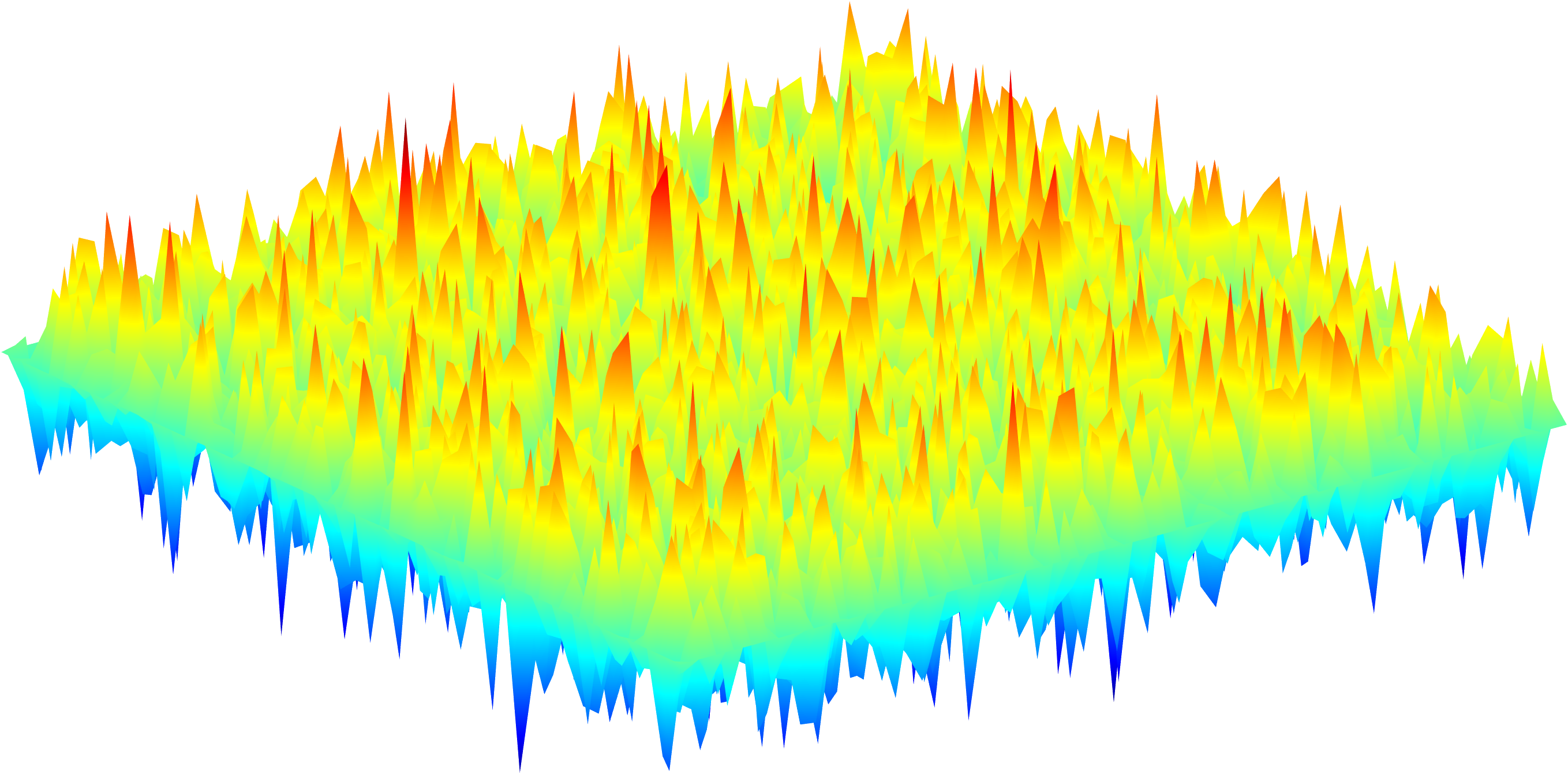}} 	\hfill
	\subfloat[$s=0.5$]{\includegraphics[width = 0.3\textwidth]{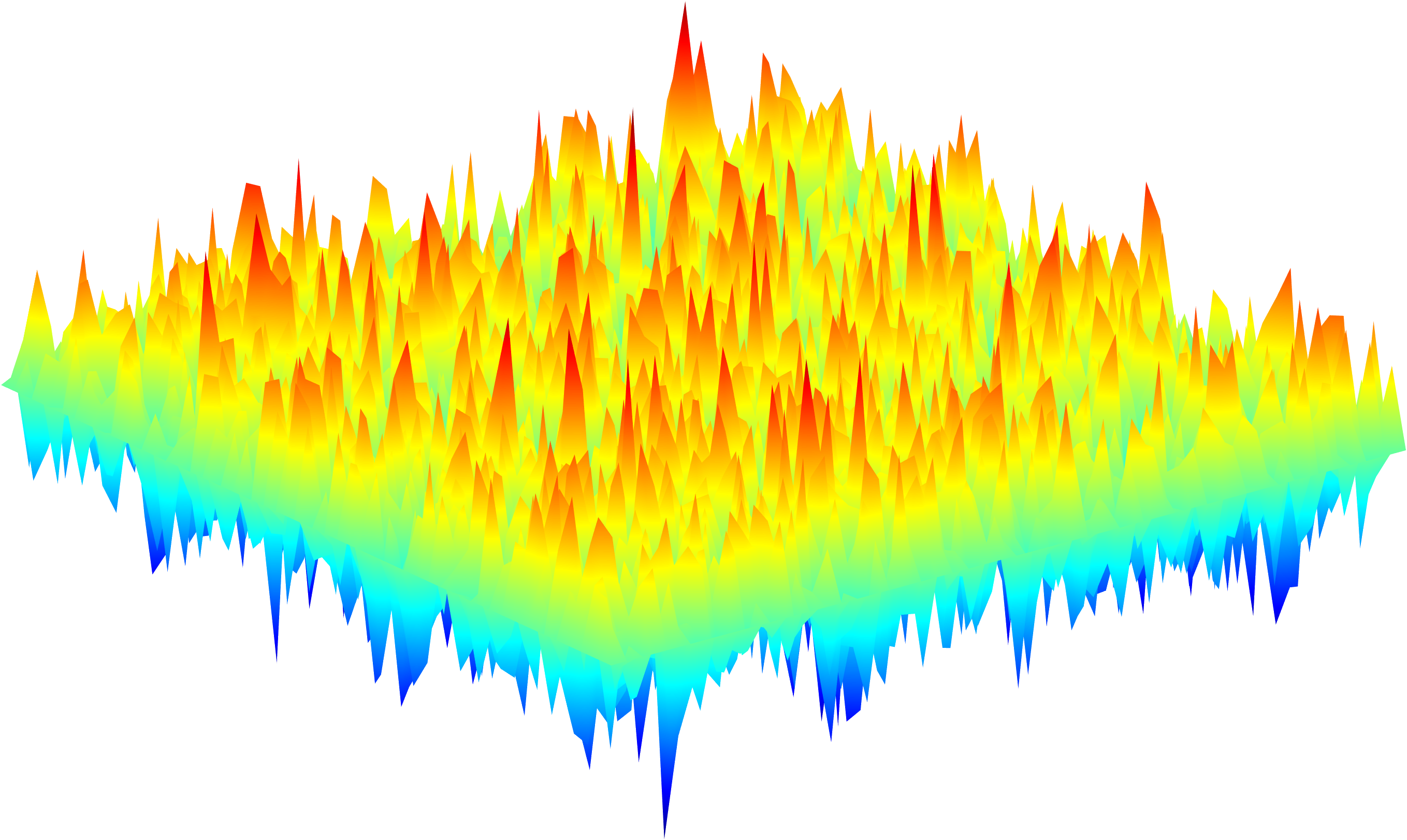}} 
	\hfill 
	\subfloat[$s=1$ (Gaussian free field)]{\includegraphics[width = 0.3\textwidth]{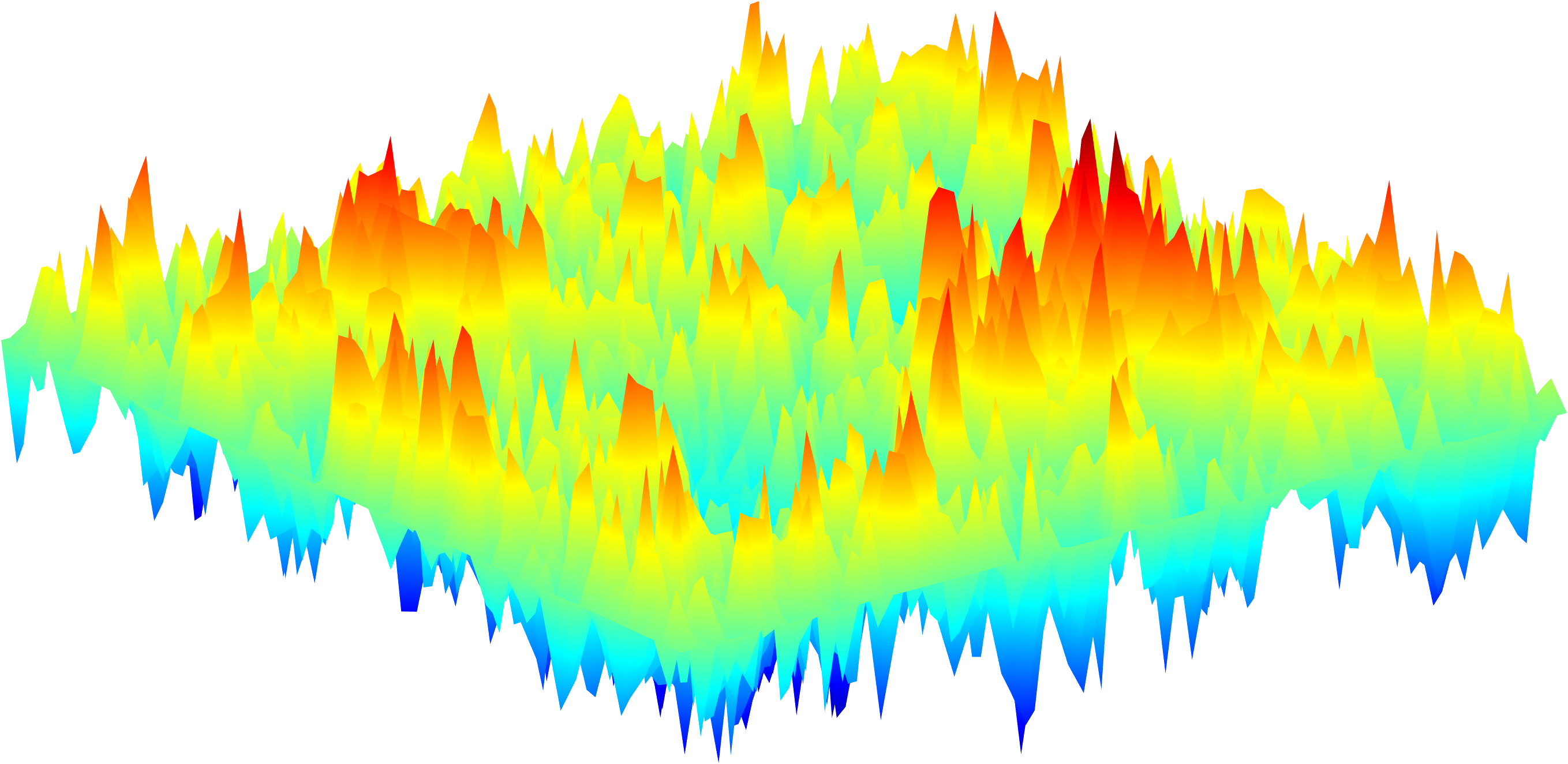}}
\\
	\subfloat[$s=1.5$]{\includegraphics[width =  0.3\textwidth]{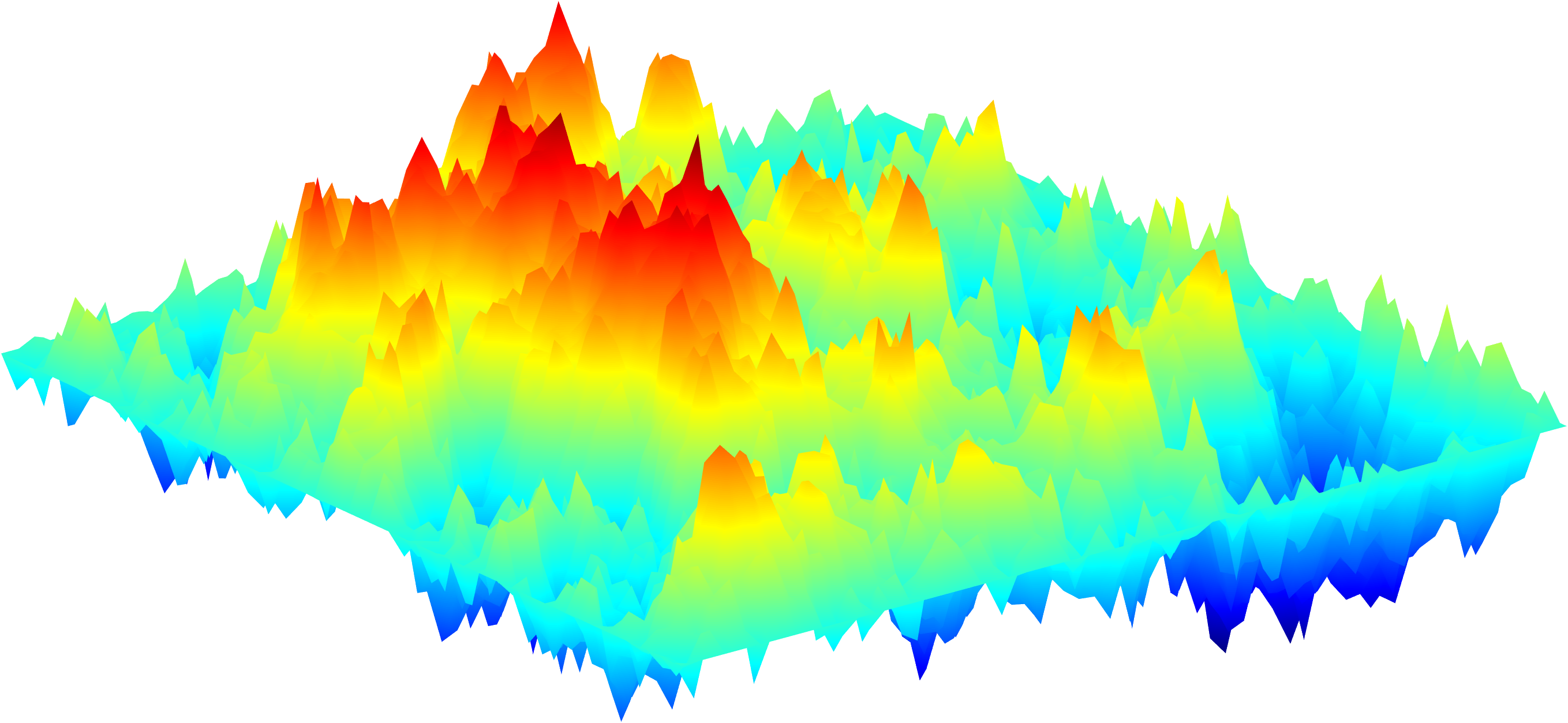}} 
	\hfill 
	\subfloat[$s=2$ (membrane model)]{\includegraphics[width = 0.3\textwidth]{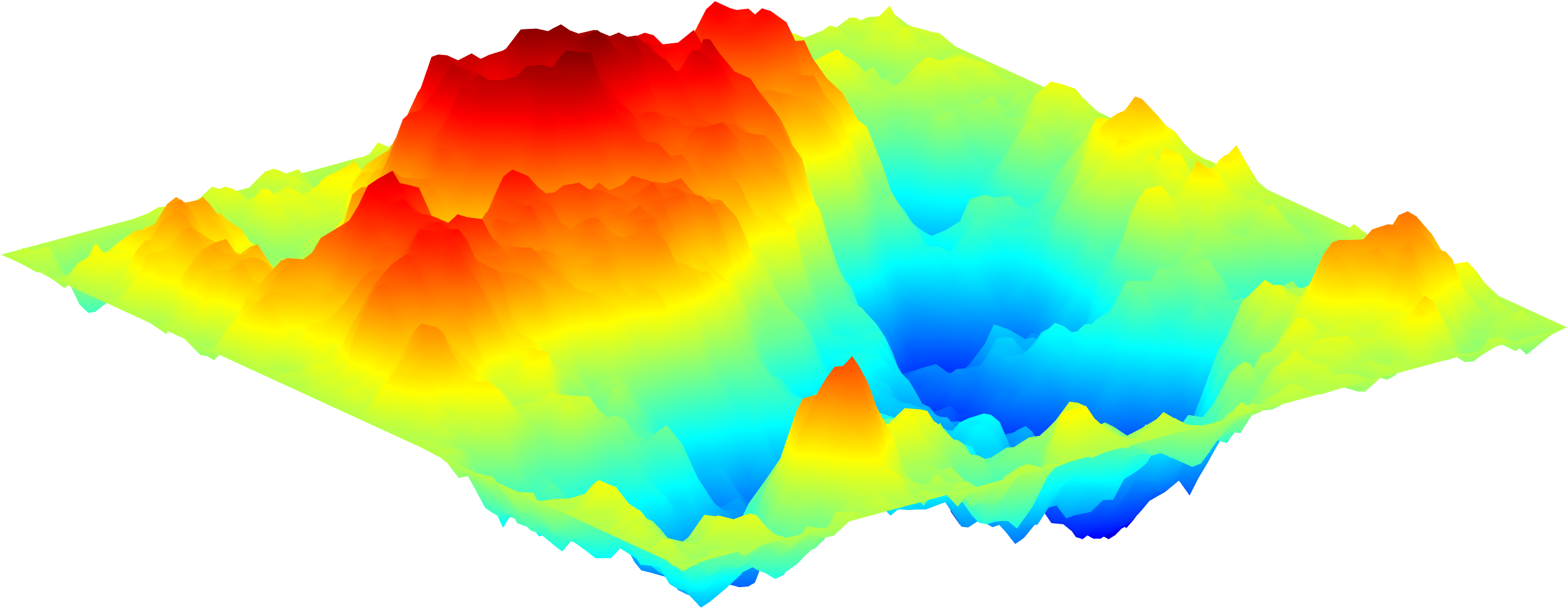}}
	\hfill 
	\subfloat[$s=2.5$]{\includegraphics[width = 0.3\textwidth]{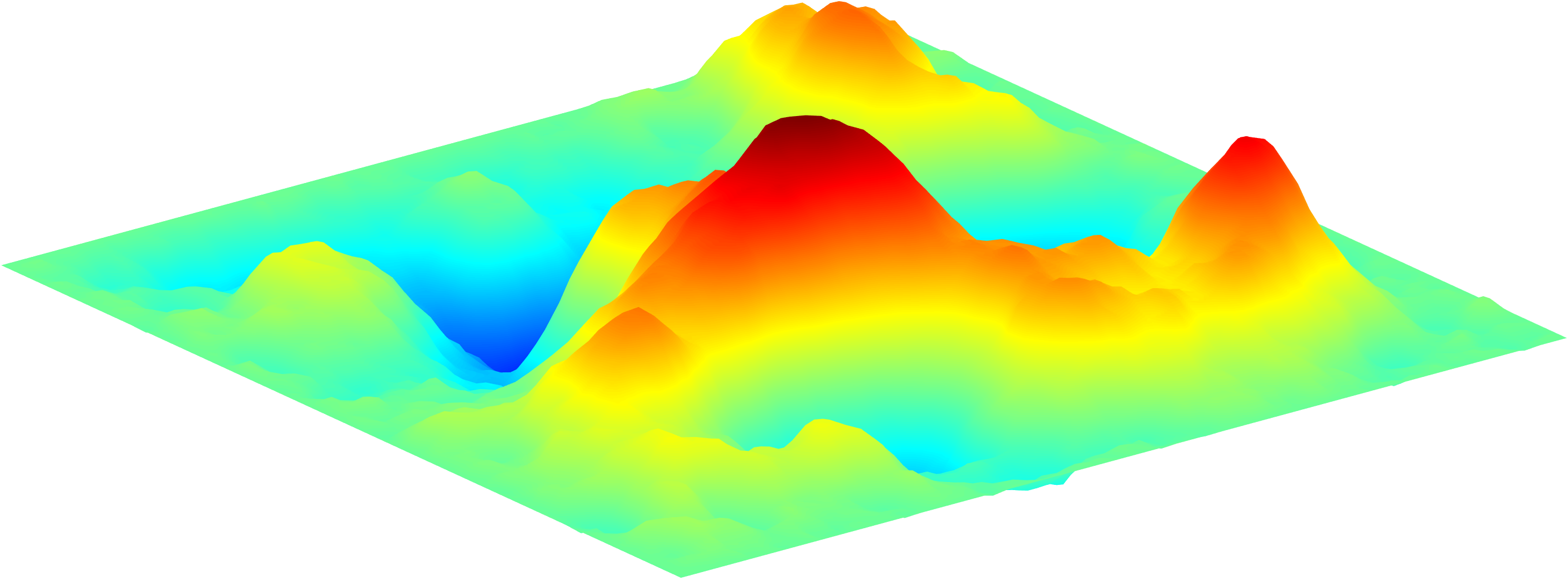}} 
\\
	\subfloat[$s=3$]{\includegraphics[width = 0.3\textwidth]{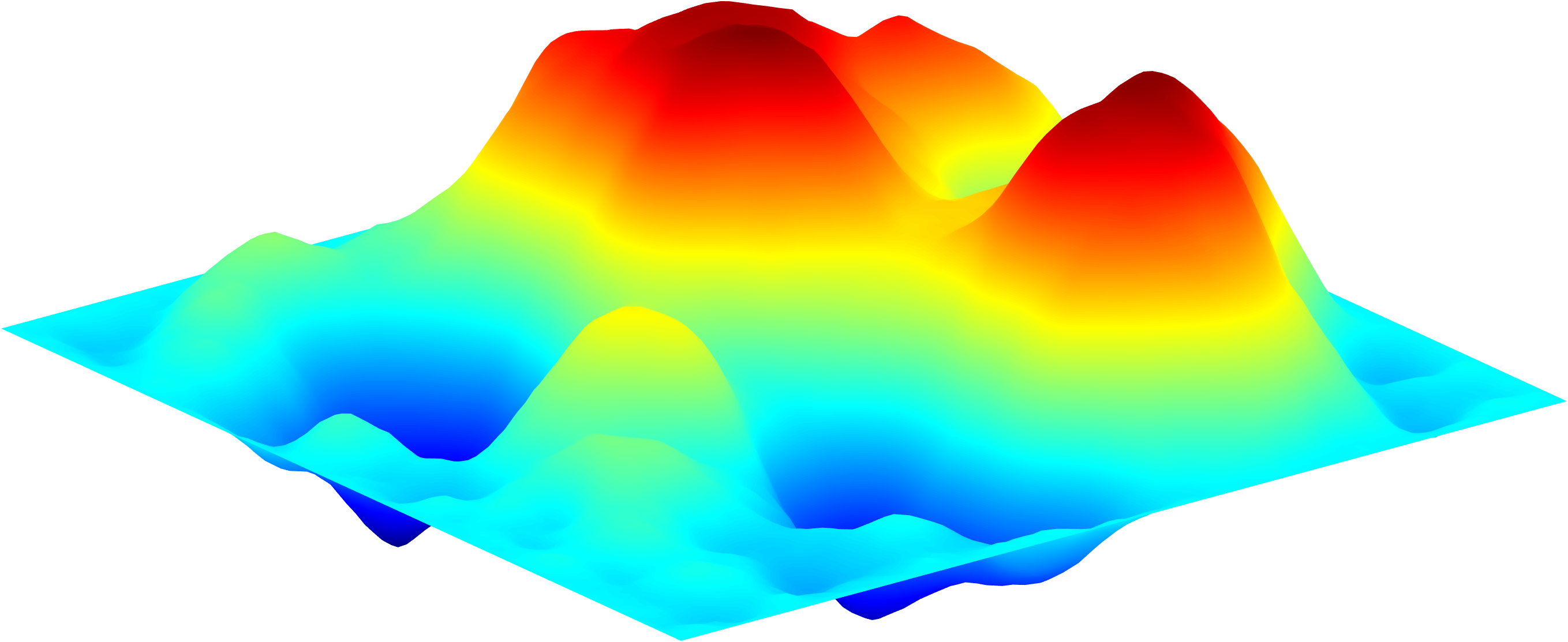}} 	\hfill
	\subfloat[$s=3.5$]{\includegraphics[width = 0.3\textwidth]{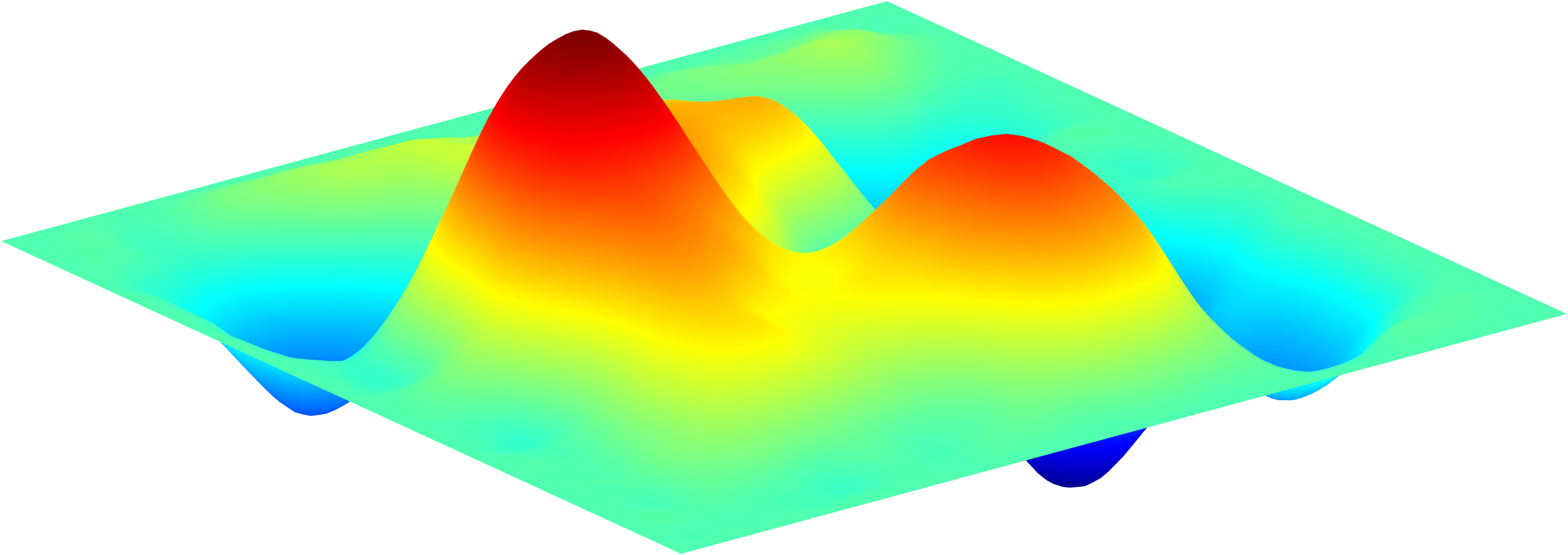}} 
	\hfill
	\subfloat[$s=4$]{\includegraphics[width = 0.3\textwidth]{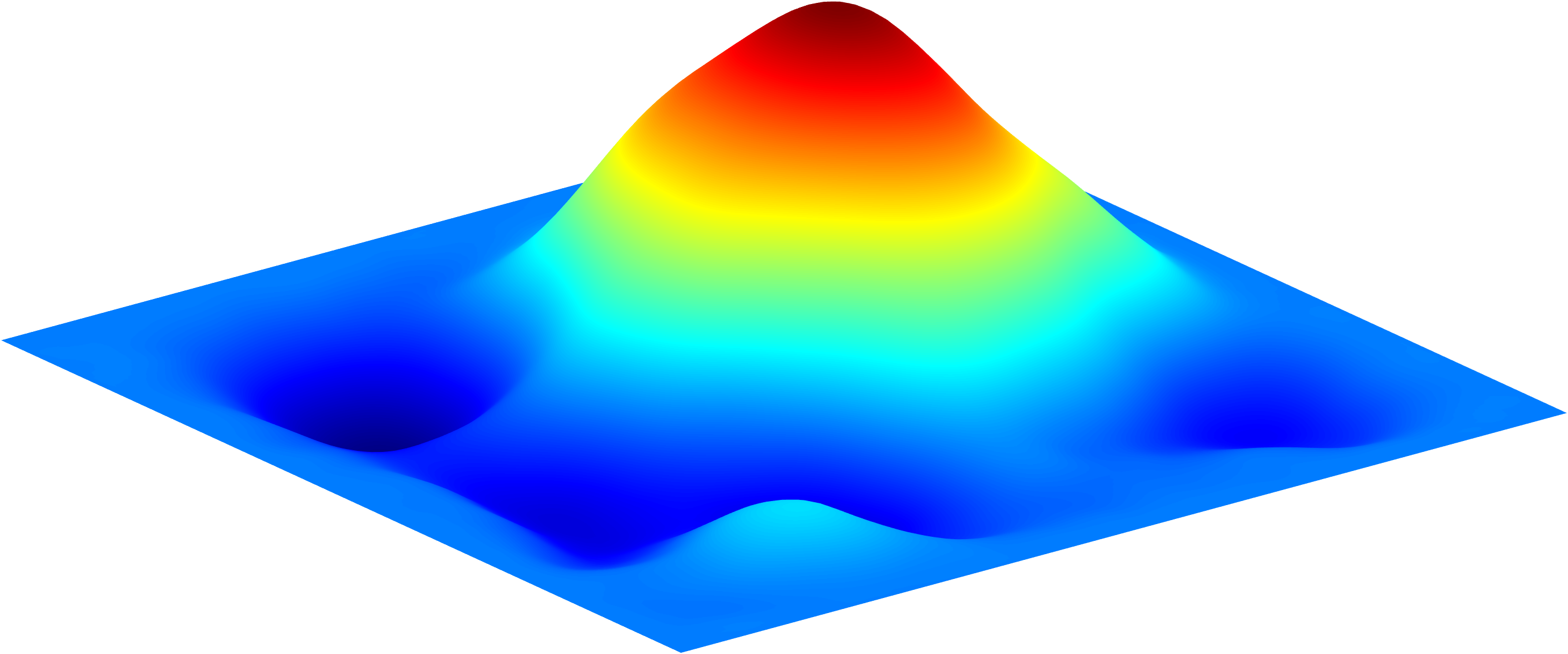}} 
	
	\caption{Surface plots of discrete fractional polyharmonic Gaussian fields on $\Omega \coloneqq  (0, 1)^2 \cap\frac{1}{100}\Z^2$ with zero boundary conditions. These discrete random functions are linearly interpolated. See also \cite[Fig. 1.1]{LSSW16} for further numerical experiments. }
	\label{fig:ex}
\end{figure}
\subsection{Fractional Gaussian Fields}\label{s:introFGF}
Fractional Gaussian fields (in short FGFs) form a natural one-parameter family of Gaussian interface models. For a fixed parameter $s\ge0$, the $s$-fractional Gaussian field is the Gaussian field whose covariance operator is $(-\Delta)^{-s}$, the inverse of the  fractional Laplacian of order $s$. We emphasize right away that we do not assume $s\in[0,1]$, and in fact our main interest is in the polyharmonic case $s>1$. A purely formal and non-rigorous way to define the $s$-fractional Gaussian field on some domain $\Omega\subset\R^d$ is to set
\begin{align}\label{eq:fgf}
\PP(\mathrm{d}\varphi)=\frac{1}{Z}\exp\left(-\frac12\int_\Omega\varphi(x)((-\Delta)^s\varphi)(x)\dd x\right)\dd\varphi.
\end{align}
This cannot be taken as a rigorous definition, as $\dd\varphi$ refers to the Lebesgue measure on the infinite-dimensional space $\R^\Omega$, which does not exist. 

There are also other issues with \eqref{eq:fgf}: namely, one needs to decide how to define $(-\Delta)^s$ for functions $\varphi\colon\Omega\to\R$ and (closely related to that issue) one needs to decide on boundary values of $\varphi$. For these questions, we have a clear answer, though. We take $0$ boundary values (i.e., we take $\varphi$ to be extended by $0$ to the whole $\R^d$), and we let $(-\Delta)^s$ be the fractional Laplacian on the full space $\R^d$, which is defined by using the Fourier transform\footnote{~An equivalent hypersingular integral formulation of the polyharmonic fractional operator of order $s  \in (0,m)$, for any \(m \in \mathbb{N}\),  is given by 
	\begin{align*}
	(-\Delta)^{s} u(x)\coloneqq C_{d, m, s} \int_{\mathbb{R}^{d}} \frac{\sum_{j=-m}^{m}(-1)^{j}\begin{pmatrix} 2 m \\ m-j\end{pmatrix} u(x+j y)}{|y|^{d+2 s}} \dd y,
	\end{align*}
	where
	\begin{align*}
	C_{d, m, s}\coloneqq  \begin{cases}\displaystyle \frac{2^{2 s} \Gamma(d / 2+s)}{\pi^{d / 2} \Gamma(-s)}\left(\sum_{j=1}^{m}(-1)^{j}\begin{pmatrix}
	2 m \\
	m-j
	\end{pmatrix} j^{2 s}\right)^{-1} & \text { if } s \in(0, m) \backslash \mathbb{N}, \\
	\displaystyle \frac{2^{2 s} \Gamma(d / 2+s) s !}{2 \pi^{d / 2}}\left(\sum_{j=2}^{m}(-1)^{j}\begin{pmatrix}
	2 m \\
	m-j
	\end{pmatrix} j^{2 s} \ln j\right)^{-1} & \text { if } s \in(0, m) \cap \mathbb{N} .\end{cases}
	\end{align*}
	
	We refer to \cite{Abatangelo_2021}  and references therein for further information on the theory of higher-order fractional Laplacians.}. That is, for any \(\xi \in \mathbb{R}^{d}\), 
\begin{align*}
\F\left[(-\Delta)^{s} u\right](\xi)=|\xi|^{2 s} \F[u](\xi)
\end{align*}
with \[\F[u](\xi)\coloneqq \int_{\R^d}\e^{-i\xi\cdot x}u(x)\dd x.\]
These choices are natural from a probabilistic point of view, as we will explain in Remark \ref{rk:prob}, and they can be implemented to provide a rigorous meaning to \eqref{eq:fgf}, for example, as a probability measure on the space of tempered distributions. This is discussed in detail in the excellent survey \cite{LSSW16} and in Section \ref{ssec:cont-FGF} we recall the points that are important for us.% FGFs are important in a wide variety of contexts in mathematics and physics (cf. the introduction of \cite{LSSW16}).

When studying a continuum random field, it is useful to define a lattice-regularized version of it. This is particularly relevant in probabilistic approaches to quantum field theory, where often it is extremely hard to define the continuum random field in question. One possible strategy is to define the corresponding field on a lattice first (which is usually much easier), and then try to prove that one can take a scaling limit.

In the setting of FGFs, of course, it is known how to construct a continuum FGF. Nonetheless it is still natural to wonder how one should define a discrete version of the FGF and whether one can recover the continuum FGF as a scaling limit. We want to address this question in a unified way for all $s\ge0$.

Thus, the main goal of the present work is to define a discrete version $\varphi_h$ of the $s$-fractional Gaussian field $\varphi$ on a lattice $\Omega_h\coloneqq \Omega\cap h\Z^d$ and to show that in the limit $h\to0$ the discrete $s$-fractional Gaussian field converges in law, with respect to a suitable topology, to the (continuous) $s$-fractional Gaussian field. Our main result is that, with our definition of a $s$-fractional Gaussian field, this convergence holds in a rather strong sense, namely in law with respect to the topology of Besov spaces for the optimal range of parameters.

Similar problems have been studied before for specific values of $s$. If $s=1$, the field is the Gaussian free field and the convergence of the discrete Gaussian free field to its continuous variant is folklore (see \cite[Section 4]{S07} for related results). The proof relies on the fact that covariances of the discrete Gaussian free field can be represented using simple random walk, which in the scaling limit becomes Brownian motion. The case $0\le s<1$ is addressed in \cite[Section 12]{LSSW16}\footnote{~There, a definition of the discrete FGF that is slightly different from ours is used; the proof, however, should apply to all reasonable discretizations including ours. Moreover, the scaling in \cite[Section 12.2]{LSSW16} is incorrect, as we explain in Footnote \ref{fn:LSSW}.}, and the proof of the scaling limit follows a similar strategy as for the case $s=1$, just with the $2s$-stable L\'evy process taking the place of Brownian motion.

These results for $s\le1$ all rely on some form of a random walk representation. For $s>1$ and for our choice of boundary values, there is no such random walk representation and so proofs become much more difficult. However, if one  one uses another definition of the FGF in terms of spectral powers of the ordinary Laplacian (the so-called eigenfunction FGF from \cite[Section 9]{LSSW16}), one retains a random walk representations and it is comparably easy to establish a scaling limit. On the torus, the eigenfunction FGF agrees with the ordinary FGF, and for the discrete FGF on the torus results similar to ours have been shown in \cite{MR4282689, MR4166192} and very recently and independently in \cite{2302.02963}. One can also study the eigenfunction FGF on domains with boundary where it is genuinely different from the ordinary FGF. In fact, in \cite{BC22}, this is done not in the lattice case, but in the more complicated case of a Sierpinski gasket.

Let us emphasize again, though, that in our setting in the regime $s>1$ the presence of the zero boundary values adds genuine new difficulties. 
The only existing result in this regime is for $s=2$, where the $s$-fractional Gaussian field is the so-called membrane model. In \cite{CDH19}, it was proven that this field is the scaling limit of its discrete version. The main ingredient in the proof were estimates for finite difference schemes for $(-\Delta)^2$ from \cite{T64}, and estimates for its Green's function from \cite{MS19}.

Thus, previous work was restricted to $s\in[0,1]\cup\{2\}$, while our results cover the entire range $s\in[0,\infty)$. Even in the case $s\in[0,1]\cup\{2\}$, our results improve upon the previous work. Namely, the convergence in \cite[Section 12]{LSSW16} is with respect to the topology of distributions and the convergence in \cite{CDH19} is with respect to the topology of some negative Sobolev space (for non-optimal parameters). As an easy corollary of our result with respect to the Besov-space topology, one obtains convergence with respect to the Sobolev-topology and also with respect to the H\"older topology (both with the optimal range of parameters).

Our method of proof uses estimates for finite difference schemes like \cite{CDH19}, but of a different flavor. Instead of the estimate from \cite{T64} used in \cite{CDH19} (that needs $C^k$-regularity of the function to be approximated by the scheme), we establish an estimate that needs only minimal regularity assumptions (essentially just $H^{s+\varepsilon}$-regularity for some $\varepsilon>0$). This is the main technical result of the paper and we will discuss it and its context next.

\subsection{Finite difference schemes for fractional operators}\label{s:findifffrac}
There is a close relation between discrete versions of Gaussian fields and finite difference approximations of the corresponding operator. Indeed, if we want to define a lattice version of \eqref{eq:fgf} that is suitably close to \eqref{eq:fgf} itself, then we need a lattice approximation of $(-\Delta)^s$; the better this approximation, the closer the resulting lattice field will be to its continuous version.

Before discussing finite difference schemes, let us mention that there has been work on finite element approaches to the fractional Laplacian (at least for $s\le1$). We cannot cover the whole body of relevant literature here, but we refer to the very recent survey \cite{BLN22}.

Let us now turn to finite difference schemes. The subject of finite difference schemes for the fractional Laplacian $(-\Delta)^s$ has been studied before and various schemes have been proposed. However, the main focus has been on the case $s<1$ and often also $d=1$. We refer to the survey \cite{HO16} and the references therein for an overview. The main challenge when constructing a finite difference scheme for the fractional Laplacian is that it is given by convolution with a singular integral kernel, and a naive discretization of this kernel might not capture its behavior near the singularity.

Our preferred way to construct a finite difference scheme arises naturally when working in Fourier space. Let us consider first the usual Laplacian, i.e., the case $s=1$. Its symbol\footnote{~The symbol is defined as the Fourier multiplier corresponding to the operator in real space. Here, $|\xi|^2\F[u](\xi)=\F[(-\Delta) u](\xi)$.} is $|\xi|^2$ and its standard finite difference approximation (given by $-\Delta_hu_h=-\sum_{j=1}^d\frac{1}{h^2}\left(u_h(x+he_j)+u_h(x-he_j)-2u_h(x)\right)$ in dimension $d$) has symbol 
\begin{equation}\label{e:symbolLap}
M_h(\xi)^2\coloneqq \sum_{j=1}^d\frac{4}{h^2}\sin^2\left(\frac{\xi_j h}{2}\right).
\end{equation}
So, for the fractional Laplacian $(-\Delta)^s$ with symbol $|\xi|^{2s}$, a natural way to define a finite difference scheme is to take the finite difference operator with symbol $M_h(\xi)^{2s}$.
That is, we define 
\[
\F_h\left[(-\Delta_h)^{s} u_h\right](\xi)=M_h(\xi)^{2s}\F_h[u_h](\xi),
\]
with 
\[\F_h[u_h](\xi)\coloneqq h^d\sum_{x\in h\Z^d}\e^{-i\xi\cdot x}u_h(x),\]
for $u_h: h\Z^d \to \R$. We can also use $M_h(\xi)^{2s}$ as a continuous Fourier multiplier and thereby understand $(-\Delta_h)^s$ also as a continuous operator, defined such that $\F\left[(-\Delta_h)^{s} u\right](\xi)=M_h(\xi)^{2s}\F[u](\xi)$. This is consistent with the previous definitions, as pointed out in Lemma \ref{l:commute}.

The symbol $M_h(\xi)^{2s}$ is a second-order approximation for $|\xi|^{2s}$, as by Taylor expansion one has
\begin{equation}\label{e:secorderapprox}
M_h(\xi)^{2s}=|\xi|^{2s}\left(1+O(h^2|\xi|^2)\right).
\end{equation}
So one can hope that the finite difference given by $(-\Delta_h)^{s}$ has accuracy $h^2$.

The scheme for $s\le1$ (but general $d$) has already been studied in \cite{HZD21} (and the special case $d=1$ already in \cite[Section 4.2]{HO16} and, in more detail, in \cite{CRSTV18}) and has many desirable properties. First of all, for $s\in\N$, we recover the standard schemes for polyharmonic Laplacians. We also have the property that $(-\Delta_h)^s(-\Delta_h)^{s'}=(-\Delta_h)^{s+s'}$. Moreover, while for other schemes the accuracy often degenerates as $s\nearrow1$, our scheme has accuracy $h^2$ uniformly in $s$ (as follows from Theorem \ref{t:error_estimate}). In Remark \ref{r:findiffscheme} below, we comment on how this scheme might work in practice.

Now that we have chosen our scheme, let us discuss other rigorous estimates for its approximation quality. In the literature on finite difference schemes, it is common to derive pointwise estimates on the error under a strong regularity assumption ($C^k$ or $C^{k,\,\alpha}$ for a large enough $k$). In fact, for $d\in\{1,2\}$ and $s\le1$, there are two such results in the literature: in \cite{CRSTV18}, pointwise estimates for the approximation error for functions in Hölder spaces (at least $C^{0,\,2s+\varepsilon}$) are shown and, in \cite{HZD21}, such pointwise estimates are shown under the assumption that the $(2s+\varepsilon)$-th derivative has integrable Fourier transform (which, roughly speaking, again corresponds to $C^{0,\,2s+\varepsilon}$).

However, as already mentioned in the previous subsection, our interest is more in estimates under low-regularity assumptions, i.e., in a Sobolev scale. To the best of our knowledge, such estimates are new (even in the case $d=1$, $s< 1$). For the case of the Laplacian or Bilaplacian, though, such results are classical (see the textbook \cite[Chapter 2]{JS14} or a recent refinement for the Bilaplacian in \cite{S20}), and our result is inspired by the latter. However, the proof is quite different. Namely,  the proof of \cite[Theorem 2.3]{S20} relied on the Bramble--Hilbert lemma and thereby used that $s\in\N$. In our general setting, we use a different approach, based on the Poisson summation formula and a lengthy estimate of various error terms in Fourier space.

\subsection{Main results}
\label{sec:main}

Let us now state our main results more precisely. We consider the discrete FGF $\varphi_h$, formally defined as 
\begin{align*}
\PP_h(\mathrm{d}\varphi_h)=\frac{1}{Z_h}\exp\left(-\frac12\sum_{x\in\Omega_h}h^d\varphi_h(x)((-\Delta_h)^s\varphi_h)(x)\right)\dd\varphi_h, 
\end{align*}
 and the continuous FGF $\varphi$, as introduced in Section \ref{s:introFGF}. As the rigorous definitions are quite technical, we postpone them to Sections \ref{ssec:discrete-FGF} and \ref{ssec:cont-FGF}, respectively. 

We claim that the scaling limit of $\varphi_h$ in an appropriate sense is $\varphi$. However, $\varphi_h$ is defined only on $h\Z^d$, so we need to interpolate it to a function on $\R^d$ first. For that purpose, we fix a compactly supported function $\Theta\in\mathcal{S}(\R^d)$ with $\int_{\R^d}\Theta(x)\dd x=1$ and define $\Theta_h(x)\coloneqq \frac{1}{h^d}\Theta\left(\frac{x}{h}\right)$.

Using $\Theta_h$, we can define the interpolated field
\[I_h\varphi_h(x)\coloneqq \sum_{y\in h\Z^d}h^d\varphi_h(y)\Theta_h(x-y)\]
as a random element of $\mathcal{S}'(\R^d)$. 

Some of the results below also hold if $\Theta$ is just a tempered distribution (and, in fact, in \cite{CDH19}, only the choice $\Theta=\delta_0$ was used). However, if we hope to find a scaling limit in some Banach space of optimal regularity, we need to consider more regular $\Theta$ (as otherwise $I_h\varphi_h$ might not even be an element of the Banach space in question); so, to avoid unneccessarily complicated notations, we directly assume that $\Theta$ is a measurable function.

As a first result, we claim that, for any $\Theta$ chosen as above, the interpolated fields $I_h\varphi_h$ converge in the sense of distributions. We note that our definition of $\varphi_h$ is made in such a way that we do not need to rescale it with some power of $h$ to obtain a scaling limit. Indeed, we have the following result.

\begin{theorem}[Scaling limit in the space of distributions]\label{t:scalinglimit_distributions}
	Let $\Omega\subset\R^d$ be a bounded domain with Lipschitz boundary, and let $s\ge0$.

	Let $\Theta$ be a compactly supported function with integral 1. Then $I_h\varphi_h$ converges in law with respect to the topology of $\mathcal{S}'(\R^d)$ to $\varphi$. That is, for any $f\in\mathcal{S}(\R^d)$, the random variable $(I_{h}\varphi _{h},f)_{L^{2}(\mathbb{R}^{d})} = \int _{\mathbb{R}^{d}}
I_{h}\varphi _{h} (x) f(x) \,\mathrm{d}x$ converges in law to
$(\varphi ,f)_{L^{2}(\mathbb{R}^{d})} = \int _{\mathbb{R}^{d}}
\varphi (x) f(x) \,\mathrm{d}x$.
\end{theorem}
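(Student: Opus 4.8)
The plan is to reduce the assertion to a purely deterministic convergence of Green's energies and then establish that by a soft compactness argument for the discrete Dirichlet problem, carried out on the Fourier side. Throughout write $Q_h:=[-\pi/h,\pi/h]^d$ for the fundamental cell of the discrete Fourier transform. I would first perform the \emph{Gaussian reduction}: both $(I_h\varphi_h,f)$ and $(\varphi,f)$ are centered Gaussian random variables, so convergence in law in $\mathcal S'(\R^d)$ follows --- via L\'evy's continuity theorem on $\mathcal S'(\R^d)$, or the Cram\'er--Wold device together with L\'evy's continuity theorem on $\R$ --- once one shows $\Var(I_h\varphi_h,f)\to\Var(\varphi,f)$ for each fixed $f\in\mathcal S(\R^d)$. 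Unwinding the definition of $I_h$ and using Fubini, $(I_h\varphi_h,f)=(\varphi_h,g_h)_{L^2_h(h\Z^d)}$ with $g_h:=(\widetilde\Theta_h*f)\re h\Z^d$ and $\widetilde\Theta_h(x):=\Theta_h(-x)$. Hence, writing $G_h$ for the covariance of $\varphi_h$ --- the Green's operator of the discrete Dirichlet problem for $(-\Delta_h)^s$ on $\Omega_h$ (Section \ref{ssec:discrete-FGF}) --- and $u_h:=G_hg_h$, one has $\Var(I_h\varphi_h,f)=(g_h,u_h)_{L^2_h}$, whereas $\Var(\varphi,f)=(f,u)_{L^2(\R^d)}$, with $u=Gf$ the unique element of the form domain $\mathcal H$ of the continuous FGF (Section \ref{ssec:cont-FGF}) solving $((-\Delta)^{s/2}u,(-\Delta)^{s/2}v)_{L^2}=(f,v)_{L^2}$ for all $v\in\mathcal H$. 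So the theorem reduces to showing $(g_h,u_h)_{L^2_h}\to(f,u)_{L^2}$.

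Next I would establish a \emph{uniform-in-$h$ energy bound} and extract a weak limit. Testing the discrete equation against $u_h$ gives $\|(-\Delta_h)^{s/2}u_h\|_{L^2_h}^2=(g_h,u_h)_{L^2_h}\le\|g_h\|_{L^2_h}\|u_h\|_{L^2_h}$. Combined with a discrete fractional Poincar\'e inequality on the bounded set $\Omega_h$ --- which I expect to hold with an $h$-independent constant, by splitting the frequency integral at a small fixed radius and using $M_h(\xi)^2\gtrsim|\xi|^2$ on $Q_h$ for the high-frequency part together with the crude bound $|\widehat{u_h}(\xi)|\lesssim\|u_h\|_{L^2_h}$ (valid since $\supp u_h\subset\Omega_h$) for the low-frequency part --- this yields $\|u_h\|_{L^2_h}+\|(-\Delta_h)^{s/2}u_h\|_{L^2_h}\le C\|g_h\|_{L^2_h}$, the right side bounded uniformly in $h$ because $\widehat{g_h}\I_{Q_h}\to\widehat f$ in $L^2(\R^d)$ (the aliasing error in restricting $\widetilde\Theta_h*f$ to the lattice is negligible, by Poisson summation and $\widehat\Theta(0)=1$, $f\in\mathcal S$). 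Thus, along a subsequence, $\widehat{u_h}\I_{Q_h}\rightharpoonup\widehat u$ in $L^2(\R^d)$ for some function $u$, and --- since $M_h(\xi)\asymp|\xi|$ on $Q_h$ with $M_h(\xi)\to|\xi|$ locally uniformly --- also $M_h(\cdot)^s\widehat{u_h}\I_{Q_h}\rightharpoonup|\cdot|^s\widehat u$ in $L^2(\R^d)$, so $u\in H^s(\R^d)$; moreover $I_hu_h\to u$ in $\mathcal S'(\R^d)$ with $\supp(I_hu_h)$ inside an $O(h)$-neighborhood of $\overline\Omega$, whence $\supp u\subset\overline\Omega$ and $u\in\mathcal H$.

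It remains to \emph{identify the limit and conclude}. For $v\in C_c^\infty(\Omega)$ I would set $v_h:=v\re h\Z^d$; by Poisson summation and the rapid decay of $\widehat v$ one has $\widehat{v_h}\I_{Q_h}\to\widehat v$ and $M_h(\cdot)^s\widehat{v_h}\I_{Q_h}\to|\cdot|^s\widehat v$ \emph{strongly} in $L^2(\R^d)$. Pairing the weak convergences above against these strong ones and passing to the limit in the discrete weak formulation $((-\Delta_h)^{s/2}u_h,(-\Delta_h)^{s/2}v_h)_{L^2_h}=(g_h,v_h)_{L^2_h}$ gives $((-\Delta)^{s/2}u,(-\Delta)^{s/2}v)_{L^2}=(f,v)_{L^2}$ for all $v\in C_c^\infty(\Omega)$, hence for all $v\in\mathcal H$ by density; since also $u\in\mathcal H$, uniqueness forces $u=Gf$, so the limit is independent of the subsequence and $u_h\to u$ along the full sequence. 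Finally $(g_h,u_h)_{L^2_h}=(2\pi)^{-d}\int_{Q_h}\overline{\widehat{g_h}}\,\widehat{u_h}\dd\xi$, and with $\widehat{g_h}\I_{Q_h}\to\widehat f$ strongly and $\widehat{u_h}\I_{Q_h}\rightharpoonup\widehat u$ weakly in $L^2(\R^d)$, this converges to $(f,u)_{L^2}=(f,Gf)_{L^2}=\Var(\varphi,f)$, as required.

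The hard part will be the uniform-in-$h$ control of the discrete problem --- above all the $h$-independent discrete fractional Poincar\'e inequality --- together with the observation that the zero boundary values on the \emph{bounded} domain $\Omega$ are exactly what keeps both Green's operators well-behaved even when $2s\ge d$, a regime where the full-space operator $(-\Delta)^{-s}$ is too singular to produce a distribution-valued field. Once these are in place the rest is soft compactness plus Fourier-side consistency. Alternatively one could aim for a quantitative rate by combining the finite-difference error bound of Theorem \ref{t:error_estimate} with estimates for the (discrete) Green's function, but the argument above already delivers convergence in law in $\mathcal S'(\R^d)$.
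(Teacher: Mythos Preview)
Your proposal is correct and takes a genuinely different route from the paper's own proof. Both arguments start with the same Gaussian reduction to showing
\[
\|\Theta_h*f\|_{\dot H^{-s}_h(\Omega_h)}^2=(g_h,u_h)_{L^2_h}\longrightarrow(f,u)_{L^2}=\|f\|_{\dot H^{-s}(\Omega)}^2,
\]
but then diverge. The paper proves this convergence \emph{quantitatively}: it introduces an auxiliary mollifier $\tilde\Theta_h$ satisfying the hypotheses of Theorem~\ref{t:error_estimate}, splits the difference $(g_h,u_h)_{L^2_h}-(f,u)_{L^2}$ into five terms $J_1,\ldots,J_5$, and uses Theorem~\ref{t:error_estimate} together with the $H^{s+\kappa}$-regularity of Lemma~\ref{l:higher_reg} to control the main term $J_2$; the remaining terms are elementary. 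In contrast, you bypass Theorem~\ref{t:error_estimate} entirely and run a soft weak-compactness argument: a uniform discrete energy bound (via the discrete Poincar\'e inequality, which is exactly Lemma~\ref{l:poincare}), extraction of a weak $L^2$-limit for $\widehat{u_h}\I_{Q_h}$ and $M_h^s\widehat{u_h}\I_{Q_h}$, identification of the limit through the discrete weak formulation tested against $v\in C_c^\infty(\Omega)$, and a weak--strong pairing to conclude. The paper's approach yields an implicit rate $h^{t-s}$ for the variances and is the one that feeds naturally into the later Besov-space results; your approach is more elementary and self-contained (it needs neither Theorem~\ref{t:error_estimate} nor Lemma~\ref{l:higher_reg}) but gives no rate. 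One point to make explicit in your write-up: the step ``$u\in H^s(\R^d)$ with $\supp u\subset\overline\Omega$ implies $u\in\dot{\tilde H}^s(\Omega)$'' uses the Lipschitz regularity of $\partial\Omega$ (this is where the identification $\tilde H^s(\Omega)=H^s_{\overline\Omega}(\R^d)$ holds for all $s$; cf.\ the references around \cite[Proposition~3.1]{T02}).
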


In Theorem \ref{t:scalinglimit_distributions}, we established a scaling limit in the space of distributions. However, as we discuss in detail in Section \ref{sec:FGF}, the continuous FGF is defined not just as a distribution-valued random variable, but actually has a certain Besov-,  Sobolev- and Hölder-regularity. Hence, it is natural that we can take the scaling limit of the $\varphi_h$ also in these spaces. In order to do so, however, we need some further assumptions on the regularization $\Theta$  (otherwise the interpolated field $I_h\varphi_h$ might not even be an element of the space). The result now is the following.

\begin{theorem}[Scaling limits in Sobolev and H\"older Spaces]\label{t:scalinglimit_sob_hold}
	Let $\Omega\subset\R^d$ be a bounded domain with Lipschitz boundary and let $s\ge0$. Let $\Theta$ be a compactly supported function with integral 1, and suppose that there exists some $k$, with $k>s+\frac d2$, such that
	\begin{equation}\label{e:assumption_Theta}
	|\F\Theta(\xi)|\le C\frac{(\sum_{j=1}^d\sin^2(\xi_j))^{k/2}}{|\xi|^k}
	\end{equation}
	for all $\xi$ and some $C$. 
	
	Let $s'\in\R$ with $s'<s-\frac d2$, and let $p,q\in[1,\infty]$. Then the interpolated fields $I_h\varphi_h$ converge in law with respect to the topology of $\hat B^{s'}_{p,\,q}(\R^d)$ to $\varphi$. 
	
	Moreover, for any fixed bounded domain $\hat\Omega$ with Lipschitz boundary such that $\Omega\Subset\hat\Omega$, $I_h\varphi_h$ are supported in $\hat\Omega$ for $h$ sufficiently small. For any $s'<s-\frac d2$, the interpolated fields $I_h\varphi_h$ converge in law with respect to the topology of $\dot H^{s'}(\hat \Omega)$ to $\varphi$.
	In addition, if $H\coloneqq s-\frac d2>0$, $m=\lceil H\rceil -1$, and $0<\alpha< H-m$, then $I_h\varphi_h$ converges in law with respect to the topology of $C^{m,\,\alpha}(\R^d)$ to $\varphi$.
	
\end{theorem}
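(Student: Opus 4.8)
\emph{Strategy.} Each of the three conclusions asserts that $I_h\varphi_h\to\varphi$ in law in a Banach space $B$ — successively $\hat B^{s'}_{p,q}(\R^d)$, $\dot H^{s'}(\hat\Omega)$, and $C^{m,\alpha}(\R^d)$ — and Theorem~\ref{t:scalinglimit_distributions} already furnishes the same convergence in the weaker topology of $\mathcal S'(\R^d)$, into which all of these spaces embed continuously. The plan is therefore the classical two-step argument: first prove \emph{tightness} of $\{\mathrm{Law}(I_h\varphi_h)\}_{h>0}$ in $B$, and then note that any subsequential limit pushes forward, under $B\hookrightarrow\mathcal S'(\R^d)$, to a weak limit in $\mathcal S'(\R^d)$, which by Theorem~\ref{t:scalinglimit_distributions} can only be $\mathrm{Law}(\varphi)$; since $\varphi\in B$ almost surely (Section~\ref{sec:FGF}) and the embedding is injective, the subsequential limit must equal $\mathrm{Law}(\varphi)$ as a measure on $B$. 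Non-separability of $\hat B^{s'}_{p,q}$ when $p$ or $q$ is $\infty$ is not an issue, since every Gaussian measure is concentrated on a separable closed subspace, here pinned down further by the uniform support property. That support property is itself immediate: $\varphi_h$ lives on $\Omega_h\subset\Omega$ and $\Theta$ has compact support, so $I_h\varphi_h$ is supported in the $O(h)$-neighbourhood of $\overline\Omega$, which is contained in $\hat\Omega$ once $h$ is small.

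\emph{Reduction of tightness to a variance bound.} By Markov's inequality together with the compactness of $\{u:\|u\|_{\hat B^{s''}_{p,q}}\le R,\ \supp u\subset\hat\Omega\}$ in $\hat B^{s'}_{p,q}(\R^d)$ for any $s''\in(s',s-\tfrac d2)$, tightness in $\hat B^{s'}_{p,q}(\R^d)$ would follow from a uniform bound $\sup_{h>0}\E\,\|I_h\varphi_h\|_{\hat B^{s''}_{p,q}}<\infty$. Invoking the Littlewood--Paley characterization of the Besov norm, Minkowski's inequality, the comparison of all moments of a centred Gaussian vector with its second moment, and — when $p$ or $q$ equals $\infty$ — the Gaussian maximal inequality (which only contributes harmless polynomial-in-$j$ factors, absorbed because $s''<s-\tfrac d2$), this reduces to the single estimate
\[
\E\,\bigl|(\Delta_j I_h\varphi_h)(x)\bigr|^2\ \le\ C\,2^{j(d-2s)}\qquad\text{for all }j\ge0,\ x\in\R^d,
\]
\emph{uniformly in $h$}, the range $j<0$ being controlled elementarily by the uniform compact support. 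This is exactly the bound satisfied by the continuum field $\varphi$, whose covariance has Fourier symbol $|\xi|^{-2s}$; the whole point is that it persists under discretization and interpolation.

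\emph{The Littlewood--Paley bound — the main obstacle.} To prove this I would express the covariance kernel of $I_h\varphi_h$ through the Green's function $G_h$ of the finite-difference operator $(-\Delta_h)^s$ on $\Omega_h$ with zero exterior data and through the interpolation kernel $\Theta_h$, pass to Fourier variables, and split at the lattice scale $|\xi|\sim 1/h$. For frequencies $2^j\lesssim 1/h$ the block $\Delta_j I_h\varphi_h$ essentially sees $G_h$, and I would compare it with the continuum Dirichlet Green's function $G$ of $(-\Delta)^s$ on $\Omega$ (the covariance of $\varphi$): the sharp finite-difference error estimate of Theorem~\ref{t:error_estimate}, applied with the minimal regularity $\varphi$ actually possesses ($H^{s+\varepsilon}$-type, which is precisely what that estimate demands), should show that the discrete and continuum covariances differ by contributions that stay summable after multiplication by $2^{s''j}$. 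For frequencies $2^j\gtrsim 1/h$ only the interpolation kernel is seen, and hypothesis~\eqref{e:assumption_Theta} — forcing $\widehat\Theta$ to decay like $|\xi|^{-k}$ with $k>s+\tfrac d2$ while vanishing at the aliasing frequencies of $h\Z^d$ — makes that contribution geometrically small in $j$. Summing the two regimes gives the uniform bound. I expect this step to be the genuine difficulty: it requires controlling at once the boundary effect in the discrete Dirichlet Green's function (delicate for $s>1$, where no maximum principle or Green's-function positivity is at hand), the finite-difference error at sub-lattice frequencies, and the high-frequency tail produced by the interpolation — and it is here that Theorem~\ref{t:error_estimate} and the hypothesis on $\Theta$ enter essentially.

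\emph{From Besov to Sobolev and Hölder.} Once convergence in law in $\hat B^{s'}_{p,q}(\R^d)$ is in hand for every $s'<s-\tfrac d2$ and all $p,q\in[1,\infty]$, the remaining two statements follow from the continuous mapping theorem applied to the standard embeddings: with $p=q=2$ the identification $\hat B^{s'}_{2,2}=\dot H^{s'}$, combined with the uniform support in $\hat\Omega$, yields convergence in $\dot H^{s'}(\hat\Omega)$; and for $m+\alpha<s'<s-\tfrac d2$ the Besov--Hölder embedding $\hat B^{s'}_{\infty,\infty}(\R^d)\hookrightarrow C^{m,\alpha}(\R^d)$ yields convergence in $C^{m,\alpha}(\R^d)$.
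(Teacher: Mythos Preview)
Your overall architecture — tightness in $\hat B^{s'}_{p,q}$ plus identification of subsequential limits via Theorem~\ref{t:scalinglimit_distributions}, followed by Besov embeddings for the Sobolev and H\"older statements — matches the paper's exactly, and your reduction to a second-moment bound on frequency-localised pieces is the same in spirit (the paper uses the wavelet tightness criterion of Lemma~\ref{l:criterion_tightness} rather than Littlewood--Paley blocks, but the two are interchangeable here). The difference, and the place where you are taking an unnecessary and possibly unclosable detour, is in how that moment bound is actually proved.

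You propose to establish $\E|(\Delta_jI_h\varphi_h)(x)|^2\le C\,2^{j(d-2s)}$ by comparing the discrete Dirichlet Green's function with the continuum one through Theorem~\ref{t:error_estimate}. The paper does \emph{not} use Theorem~\ref{t:error_estimate} for tightness at all; that theorem is invoked only in the proof of Theorem~\ref{t:scalinglimit_distributions}. Instead, the paper exploits the elementary domain monotonicity
\[
\E_h\bigl(\varphi_h,g_h\bigr)_{L^2_h}^2=\|g_h\|_{\dot H^{-s}_h(\Omega_h)}^2\ \le\ \|g_h\|_{\dot H^{-s}_h(h\Z^d)}^2,
\]
which discards the zero boundary condition and reduces everything to a full-lattice $\dot H^{-s}_h$ norm. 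That norm is then computed directly in Fourier variables: the Poisson summation formula (Lemma~\ref{l:disc_cont_FT}) rewrites $\F_h[\Theta_h*g_j^{(a)}]$ as a sum over aliases $\zeta\in\frac{2\pi}{h}\Z^d$, a Cauchy--Schwarz step separates the $\F\Theta$ contribution from the $\F g_j$ contribution, and then hypothesis~\eqref{e:assumption_Theta} together with the vanishing moments of the wavelets $g_j$ give the bound by an explicit integral estimate. No boundary analysis, no Green's function comparison, no reference to the continuum field enters.

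Your route, by contrast, would have to convert Theorem~\ref{t:error_estimate} --- an $\dot H^s_h$ bound on $u-u_h$ for a \emph{fixed} right-hand side, with rate $h^{t-s}$ where $t-s$ is limited to the small $\kappa$ supplied by Lemma~\ref{l:higher_reg} --- into a bound on frequency-localised covariances that is \emph{uniform in both $h$ and the dyadic scale $j$}. Tracking how that rate interacts with the scale $2^j$ of the test function, and showing that the comparison term stays below $C\,2^{j(d-2s)}$ uniformly, is exactly the delicate boundary issue you yourself flag; it is not clear Theorem~\ref{t:error_estimate} alone delivers this, and the paper's monotonicity trick simply bypasses the problem.
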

Here, $\hat B^{s'}_{p,\,q}(\R^d)$ is, up to a minor technicality that we again discuss in Section \ref{sec:FGF}, equal to the standard Besov space $B^{s'}_{p,\,q}(\R^d)$.

Several remarks are in order.
First of all, a convenient example of a function satisfying \eqref{e:assumption_Theta} for some $k\in\N$ is given by the centered B-spline of order $k$ (see, e.g., \cite[Chapter 4]{Schu07} for an introduction and \cite[Section 1.9.4]{JS14} for a summary in the context of finite difference schemes).

Next, the range of parameters is basically optimal. Indeed, the continuous FGF is not in $\hat B^{s'}_{p,\,q}$ for any $s'\ge s-\frac d2$ and any $p,q$ with the only possible exception $s'=s-\frac d2$, $p<\infty$, and $q=\infty$ (cf. \cite{MR2918350}, where the case $s=0$ is studied in detail; the general case should be similar). In view of this, Theorem \ref{t:scalinglimit_sob_hold} is optimal apart from the fact that it does not cover the endpoint case $s'=s-\frac d2$, $p<\infty$, and $q=\infty$.

Regarding the convergence in Sobolev spaces, we cannot expect convergence with respect to the topology of $\dot H^{s'}(\Omega)$ for the simple reason that, because of the mollification, $I_h\varphi_h$ need not have zero boundary values outside of $\Omega$.

A fundamental step in the proof of the results above is establishing the following error estimate for a fractional Poisson equation (which is of interest in itself). Our goal is to compare the solutions of $(-\Delta)^su=f$ and of $(-\Delta_h)^su_h=f$ and we will estimate the error $u-u_h$ in the (discrete) energy norm $\|\cdot \|_{\dot H^s_h}$. We refer to Sections \ref{ssec:cont-FGF} and \ref{ssec:discrete-FGF} for the definitions of continuous and discrete fractional Sobolev norms, respectively. As we work under minimal regularity assumptions on $u$, the precise result is somewhat more technical. Namely, in general $u$ and $f$ might not be continuous functions and so it is not clear how to restrict them to the lattice. We circumvent this by introducing two additional mollifiers.

The result then takes the following shape.
\begin{theorem}[Error estimate on the discrete approximation]
	\label{t:error_estimate}
	Let $\Omega\subset\R^d$ be an open bounded set with Lipschitz boundary. Let $\Theta\colon\, \R^d\to\R$ and $\theta\colon\, \R^d\to\R$ be mollifiers that are compactly supported, symmetric around 0, and have integral 1. Furthermore, let us assume that there exist $k,\,l\ge0$ such that
	\begin{align*}
	|\F\Theta(\xi)|&\le C\frac{(\sum_{j=1}^d\sin^2(\xi_j))^{k/2}}{|\xi|^k},\\
	|\F\theta(\xi)|&\le C\frac{1}{(1+|\xi|)^l},
	\end{align*}
	for some $C$ and define $\Theta_h(x)\coloneqq \frac{1}{h^d}\Theta\left(\frac{x}{h}\right)$, $\theta_h(x)\coloneqq \frac{1}{h^d}\theta\left(\frac{x}{h}\right)$.
	
	Let $0<s<t$ and let $u\in H^t(\R^d)$ be the solution of
	\begin{align*}
	\begin{cases}
	(-\Delta)^s u(x)=f(x), &x \in \Omega,\\
	u(x)=0, & x \in \R^d\setminus\Omega,
	\end{cases}	
	\end{align*}
	for some $f\in H^{t-2s}(\R^d)$, and $u_h\colon\,h\Z^d\to\R$ be the solution of
	\begin{align*}
	\begin{cases}
	(-\Delta_h)^s u_h(x) =\Theta_h*f(x), & x \in h\Z^d\cap\Omega, \\
	u_h(x)=0, & x \in h\Z^d\setminus\Omega.
	\end{cases}	
	\end{align*}
	Then, if $h\le1$, $k\ge s$, $k>\frac{d}{2}+2s-t$, $l>\frac{d}{2}-t$, and $t-s\le 2$, we have the estimate
	\[\|\theta_h*u-u_h\|_{\dot H^s_h(h\Z^d)}\le Ch^{t-s}\|u\|_{\dot H^t(\R^d)},\]
	where $C>0$ depends on $\Theta,\,\theta,\,s,\,t,\,\Omega$, but not on $h$.
\end{theorem}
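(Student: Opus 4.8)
The plan is to combine the variational structure of the discrete equation with a Fourier-analytic consistency estimate. Since $(-\Delta_h)^s$ is a positive self-adjoint Fourier multiplier, one has $\|v_h\|_{\dot H^s_h}^2=((-\Delta_h)^sv_h,v_h)_{L^2_h}$, and $u_h$ is characterized by $((-\Delta_h)^su_h,w_h)_{L^2_h}=(\Theta_h*f,w_h)_{L^2_h}$ for all $w_h$ vanishing on $h\Z^d\setminus\Omega$. Write $e_h:=\theta_h*u-u_h$, where $\theta_h*u$ is restricted to $h\Z^d$ (the hypothesis $l>\tfrac d2-t$ ensures $\F[\theta_h*u]$ is integrable, so this restriction makes sense). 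Splitting $e_h=e_h\I_\Omega+e_h\I_{\R^d\setminus\Omega}$ — on $h\Z^d\setminus\Omega$ one has $e_h=\theta_h*u$ (as $u_h=0$ there) and also $u=0$ there, so $e_h=\theta_h*u-u$ on that set — one obtains, using the characterization of $u_h$,
\[
\|e_h\|_{\dot H^s_h}^2=\big((-\Delta_h)^s(\theta_h*u)-\Theta_h*f,\,e_h\I_\Omega\big)_{L^2_h}+\big((-\Delta_h)^se_h,\,(\theta_h*u)\I_{\R^d\setminus\Omega}\big)_{L^2_h}.
\]
By the Cauchy--Schwarz inequality in the $\dot H^s_h$/$\dot H^{-s}_h$ duality, the theorem reduces to three bounds of order $h^{t-s}\|u\|_{\dot H^t(\R^d)}$: a consistency estimate $\|(-\Delta_h)^s(\theta_h*u)-\Theta_h*((-\Delta)^su)\|_{\dot H^{-s}_h}\le Ch^{t-s}\|u\|_{\dot H^t}$; a boundary-layer bound $\|(\theta_h*u)\I_{\R^d\setminus\Omega}\|_{\dot H^s_h}\le Ch^{t-s}\|u\|_{\dot H^t}$; and a bound on the difference $\Theta_h*f-\Theta_h*((-\Delta)^su)$ restricted to $h\Z^d\cap\Omega$ (nonzero only near $\partial\Omega$, since $f=(-\Delta)^su$ in $\Omega$ and $\Theta_h$ has support of radius $O(h)$). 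The latter two are genuine but softer: the boundary-layer bound follows from $\theta_h*u=\theta_h*u-u$ on $\R^d\setminus\Omega$ together with the mollification estimate $\|\theta_h*u-u\|_{\dot H^s(\R^d)}\le Ch^{t-s}\|u\|_{\dot H^t}$ — one place where $t-s\le2$ enters, via $|\F[\theta](h\xi)-1|\le C\min(|h\xi|^2,1)\le C(h|\xi|)^{t-s}$ (using $\F[\theta](0)=1$ and $\nabla\F[\theta](0)=0$) — a localization bound for the discrete energy norm, and the Lipschitz regularity of $\partial\Omega$; the third bound is handled analogously, the relevant part of $f-(-\Delta)^su$ being supported in an $O(h)$-neighborhood of $\partial\Omega$ outside $\Omega$.

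The heart of the matter is the consistency estimate, which I would prove in Fourier space. Set $g:=(-\Delta)^su\in H^{t-2s}(\R^d)$, so $\F[g](\xi)=|\xi|^{2s}\F[u](\xi)$. On $\R^d$, $\F[(-\Delta_h)^s(\theta_h*u)](\xi)=M_h(\xi)^{2s}\F[\theta](h\xi)\F[u](\xi)$ and $\F[\Theta_h*g](\xi)=\F[\Theta](h\xi)|\xi|^{2s}\F[u](\xi)$, and restricting the difference to $h\Z^d$ periodizes the Fourier transform with period $\tfrac{2\pi}{h}$; since $M_h$ is $\tfrac{2\pi}{h}$-periodic, the Poisson summation formula gives
\[
\big\|(-\Delta_h)^s(\theta_h*u)-\Theta_h*g\big\|_{\dot H^{-s}_h}^2=c\int_{[-\pi/h,\pi/h]^d}\frac{1}{M_h(\xi)^{2s}}\Big|\sum_{k\in\Z^d}\big(M_h(\xi)^{2s}\F[\theta](\eta_k)-|\xi_k|^{2s}\F[\Theta](\eta_k)\big)\F[u](\xi_k)\Big|^2\dd\xi,
\]
where $\xi_k:=\xi+\tfrac{2\pi k}{h}$ and $\eta_k:=h\xi_k$. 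I would treat the term $k=0$ and the aliasing terms $k\ne0$ separately. For $k=0$, decompose $M_h(\xi)^{2s}\F[\theta](h\xi)-|\xi|^{2s}\F[\Theta](h\xi)$ as $(M_h(\xi)^{2s}-|\xi|^{2s})\F[\theta](h\xi)+|\xi|^{2s}(\F[\theta](h\xi)-\F[\Theta](h\xi))$ and use the symbol expansion $M_h(\xi)^2=|\xi|^2+O(h^2|\xi|^4)$ — hence $\tfrac{4}{\pi^2}|\xi|^2\le M_h(\xi)^2\le|\xi|^2$ and $\big|M_h(\xi)^{2s}-|\xi|^{2s}\big|\le Ch^2|\xi|^{2s+2}$ on the fundamental cell — together with $\F[\theta](0)=\F[\Theta](0)=1$ and the symmetry of $\theta,\Theta$ (so that $\nabla\F[\theta](0)=\nabla\F[\Theta](0)=0$), to bound the $k=0$ term by $\lesssim|\xi|^{2s}\min(h^2|\xi|^2,1)$ on the fundamental cell. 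With the weight $M_h(\xi)^{-2s}\asymp|\xi|^{-2s}$ and the inequality $\min(h^2|\xi|^2,1)^2\le(h|\xi|)^{2(t-s)}$ (again using $t-s\le2$), the $k=0$ contribution is $\lesssim h^{2(t-s)}\int_{\R^d}|\xi|^{2t}|\F[u](\xi)|^2\dd\xi=h^{2(t-s)}\|u\|_{\dot H^t}^2$.

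For the aliasing terms $k\ne0$, the factors $\F[\theta](\eta_k)$ and $\F[\Theta](\eta_k)$ are evaluated away from the origin, where one invokes the decay hypotheses $|\F[\Theta](\eta)|\lesssim(\sum_j\sin^2\eta_j)^{k/2}/|\eta|^k$ and $|\F[\theta](\eta)|\lesssim(1+|\eta|)^{-l}$. The structural point is that $(\sum_j\sin^2\eta_{k,j})^{1/2}$ is independent of $k$ and $\le hM_h(\xi)$, so that both $|M_h(\xi)^{2s}\F[\theta](\eta_k)|$ and $\big||\xi_k|^{2s}\F[\Theta](\eta_k)\big|$ are controlled by quantities that the factor $M_h(\xi)^{-2s}$ in the integrand can absorb. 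Splitting $\big|\sum_{k\ne0}(\cdots)\big|^2$ by Cauchy--Schwarz against weights $(1+|k|)^{\pm\varepsilon}$ and bounding the resulting frequency integrals by $\|u\|_{\dot H^t}^2$ times a power of $h$, one finds that convergence of the $k$-sum and of the integrals requires precisely $k\ge s$ (so that $M_h(\xi)^{2k-2s}\lesssim h^{2s-2k}$ may be used), $k>\tfrac d2+2s-t$ and $l>\tfrac d2-t$, and that the outcome is again $\lesssim h^{2(t-s)}\|u\|_{\dot H^t}^2$.

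The main obstacle is exactly this aliasing estimate. Near the corners of the fundamental cell $M_h(\xi)$ is small, so the weight $M_h(\xi)^{-2s}$ degenerates, and one must show that the numerator degenerates at a matching rate — which is what the prescribed zeros of $\F[\Theta]$, encoded in the factor $(\sum_j\sin^2\eta_j)^{k/2}$ (modulo the discrepancy between $\sin^2\eta_j$ and $\sin^2(\eta_j/2)$, which has to be handled separately), are designed to provide. Carrying this cancellation through the full aliasing sum, uniformly in $h\le1$ and over the whole fundamental cell, while keeping the constant independent of $h$ and the final bound proportional only to $\|u\|_{\dot H^t}$, is the long technical computation at the core of the proof.
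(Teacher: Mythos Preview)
Your Fourier-analytic consistency estimate is essentially the paper's argument: your ``$k=0$ term'' corresponds to the paper's $I_2+I_3+I_4$ (mollification error of $\theta$, mollification error of $\Theta$, and symbol error $M_h^{2s}-|\xi|^{2s}$), and your aliasing terms $k\neq0$ correspond to the paper's $I_1$ and $I_5$, handled via Poisson summation and Cauchy--Schwarz exactly as you sketch. Your explanation of where the constraints $k\ge s$, $k>\tfrac d2+2s-t$, $l>\tfrac d2-t$, $t-s\le2$ enter is accurate. One small slip: $M_h(\xi)$ vanishes on the fundamental cell only at $\xi=0$, not at the corners (at the corners $M_h(\xi)^2=4d/h^2$ is maximal), so there is no degeneration there; the genuine cancellation you need is that $\sum_j\sin^2(h\xi_j)\le h^2M_h(\xi)^2$, which follows from the double-angle formula and kills the weight $M_h(\xi)^{-2s}$ exactly as in the paper's estimate of $I_5$.

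Where your route differs is in the reduction to the consistency estimate. The paper does not split $e_h$ into its $\Omega_h$ and $h\Z^d\setminus\Omega_h$ parts. Instead it uses that
\[
\|\theta_h*u-u_h\|_{\dot H^s_h(h\Z^d)}=\|(-\Delta_h)^s(\theta_h*u-u_h)\|_{\dot H^{-s}_h(\Omega_h)}=\inf_{v_h=(-\Delta_h)^s(\theta_h*u-u_h)\text{ on }\Omega_h}\|v_h\|_{\dot H^{-s}_h(h\Z^d)},
\]
and then simply \emph{chooses} $v_h:=I_1+\dots+I_5$ (defined on all of $h\Z^d$), which agrees with $(-\Delta_h)^s(\theta_h*u-u_h)$ on $\Omega_h$. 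The whole task becomes estimating $\|I_j\|_{\dot H^{-s}_h(h\Z^d)}$, with no boundary-layer term ever appearing. This is the cleaner reduction.

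By contrast, your splitting produces the term $\|(\theta_h*u)\I_{h\Z^d\setminus\Omega}\|_{\dot H^s_h}$, and here your sketch has a real gap. You propose to use $\|\theta_h*u-u\|_{\dot H^s(\R^d)}\le Ch^{t-s}\|u\|_{\dot H^t}$ together with a ``localization bound for the discrete energy norm''. But multiplication by $\I_{\R^d\setminus\Omega}$ is not bounded on $\dot H^s$ once $s\ge\tfrac12$, so there is no such localization step in general: a sharp cutoff of a function with small $\dot H^s$-norm can have arbitrarily large $\dot H^s$-norm. To make your route work one would need something more delicate (Hardy-type control on $u$ near $\partial\Omega$ combined with a careful count of the $O(h^{1-d})$ affected lattice points, with the Lipschitz regularity of $\partial\Omega$), and this is not the ``softer'' step you describe. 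The paper's quotient-norm manoeuvre sidesteps this entirely. Your third bound (the discrepancy $\Theta_h*f-\Theta_h*((-\Delta)^su)$ on $\Omega_h$) is a fair observation, but in the paper's presentation it is absorbed by simply reading $f$ as $(-\Delta)^su$ on $\R^d$ when writing down the consistency residual.
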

Here (as in the rest of the paper), $C$ denotes some generic constant that might change from line to line, but is always independent of $h$.% When we want to emphasize what variables a constant $C$ depends on, we add them as indices.

Let us give some explanations regarding the linear constraints on the parameters in this result. The most important constraint is $t-s \le 2$. It arises from the fact that the proposed finite difference scheme is of second order (see \cite[Section 5.2]{HO16}), so that the accurary of our scheme saturates at $h^2$.
The condition $k > \frac{d}{2}+2s-t$ is needed in order for $\Theta_h \ast f$ to be continuous (so that it has a well-defined restriction to the lattice). Similarly, the condition $l>\frac{d}{2}-t$ is needed in order for $\theta_h \ast u$ to be continuous.

As mentioned in Section \ref{s:findifffrac}, this is the first rigorous estimate for a finite difference scheme for $(-\Delta)^s$ under low regularity assumptions. For finite elements, a result that is similar in spirit can be found in \cite[Theorem 2.6]{BLN22}. There an estimate for piecewise linear finite elements is shown that is similar to our result (albeit with the additional restriction $t-s\le\frac12$ instead of $t-s\le2$). The method of proof is very different.

\subsection{Future work}
The most well-studied discrete Gaussian interface model is certainly the discrete Gaussian free field (corresponding to $s=1$ in our notation). In recent years, there has been a lot of activity to extend results known for the discrete Gaussian free field to other discrete (Gaussian or non-Gaussian) interface models, and this work is a first step to include the discrete FGFs in the latter class.

Let us highlight one such question, namely regarding the maximum of the field. In case of the discrete Gaussian free field, this is well-understood. In the subcritical dimension ($d=1$), the field is nothing but a random walk bridge, so it is easy to see that the rescaled maximum converges to a non-degenerate random variable. In supercritical dimensions ($d\ge3$), correlations decay so rapidly that the maximum behaves as if the field values were independent \cite{CCH16}. The most interesting case is the critical case, $d=2$, where the field is log-correlated and obtains the typical second-order correction \cite{BDZ16}.

These results have already been extended to the case of the membrane model (corresponding to $s=2$). The subcritical case ($d\le3$) was studied in \cite{CDH19} using results from \cite{MS19}, the supercritical case in \cite{CCH16}, and finally the critical case in \cite{S20}. An important tool in the latter proof was an estimate for finite difference schemes very similar to the one in Theorem \ref{t:error_estimate}.

For general $s$, it is very likely that similar results hold true. In fact, in the subcritical case $d<2s$, convergence of the rescaled maximum is a straightforward corollary of Theorem \ref{t:scalinglimit_sob_hold}.
\begin{corollary}[Convergence of the maximum for $d<2s$]\label{c:convmaxsubcrit}
Let $d<2s$, $\Omega\subset\R^d$ be a fixed bounded domain with Lipschitz boundary, and consider the family $\varphi_h$ of $s$-FGF on $\Omega_h\coloneqq \Omega\cap h\Z^d$ as $h\to0$. Then the random variables $\max_{x\in\Omega_h}\varphi_h(x)$ converge in distribution to a non-degenerate random variable.
\end{corollary}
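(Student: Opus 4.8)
\emph{Proof sketch / plan.} The plan is to deduce the corollary from the H\"older-space scaling limit of Theorem~\ref{t:scalinglimit_sob_hold}, applied with a suitable mollifier, and then to close the small gap between $\max_{\overline{\hat\Omega}}I_h\varphi_h$ and the discrete maximum $M_h:=\max_{x\in\Omega_h}\varphi_h(x)$.

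\emph{Step 1: the continuous maximum.} Fix $k\in\N$ with $k>s+\tfrac d2$ and take $\Theta$ to be the centered tensor-product B-spline of order $k$, i.e.\ the $d$-fold tensor product of the $k$-fold self-convolution of $\tfrac12\I_{[-1,1]}$. This $\Theta$ is nonnegative, compactly supported, symmetric, has integral $1$, satisfies \eqref{e:assumption_Theta} (its Fourier transform $\prod_{j=1}^d(\sin\xi_j/\xi_j)^k$ vanishes to order $k$ on $\pi\Z^d\setminus\{0\}$), and, since $\F\Theta$ also vanishes on $2\pi\Z^d\setminus\{0\}$, it satisfies the exact partition-of-unity identity $\sum_{y\in h\Z^d}h^d\Theta_h(x-y)=1$ for all $x$ and $h$. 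Since $d<2s$ we have $H:=s-\tfrac d2>0$, so Theorem~\ref{t:scalinglimit_sob_hold} (with $m=\lceil H\rceil-1$ and a suitable $\alpha\in(0,H-m)$) gives $I_h\varphi_h\to\varphi$ in law in $C^{m,\alpha}(\R^d)$, hence uniformly on compacts, while $I_h\varphi_h$ is supported in a fixed Lipschitz domain $\hat\Omega$ with $\Omega\Subset\hat\Omega$ for all small $h$. The limit $\varphi$ is a.s.\ continuous and vanishes off $\bar\Omega$, so $X:=\max_{\bar\Omega}\varphi=\sup_{\R^d}\varphi$ is a.s.\ finite and nonnegative; it is non-degenerate, because $X\ge\varphi(x_0)$ for any fixed $x_0\in\Omega$ with $\Var\varphi(x_0)>0$ and the right-hand side is an unbounded centered Gaussian, so $X$ is not a.s.\ constant. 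Since $f\mapsto\max_{\overline{\hat\Omega}}f$ is continuous on $C^0(\overline{\hat\Omega})$, the continuous mapping theorem gives $A_h:=\max_{\overline{\hat\Omega}}I_h\varphi_h\to X$ in distribution.

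\emph{Step 2: reduction to an oscillation estimate.} Let $R>0$ with $\supp\Theta\subseteq B_R(0)$ and set
\[
\mathrm{osc}_{Rh}:=\max\bigl\{|\varphi_h(x)-\varphi_h(y)|:x,y\in h\Z^d\cap\hat\Omega,\ |x-y|\le Rh\bigr\}.
\]
A short argument shows $0\le M_h\vee 0-A_h\le\mathrm{osc}_{Rh}$ for all small $h$: non-negativity and the partition-of-unity property make $I_h\varphi_h(x)$ a convex combination of the lattice values $(\varphi_h(y))_{y\in h\Z^d}$, so $A_h\le\max_{y\in h\Z^d}\varphi_h(y)=M_h\vee 0$; evaluating $I_h\varphi_h$ at a point of $\hat\Omega$ at distance $>Rh$ from $\Omega_h$ (which exists since $\Omega\Subset\hat\Omega$) gives $A_h\ge 0$; and if $x^\ast\in\Omega_h$ attains $M_h$, then $M_h-A_h\le M_h-I_h\varphi_h(x^\ast)=\sum_{y}h^d\Theta_h(x^\ast-y)\bigl(\varphi_h(x^\ast)-\varphi_h(y)\bigr)\le\mathrm{osc}_{Rh}$ (for small $h$ all relevant $y$ lie in $\hat\Omega$), which combined with $A_h\ge 0$ yields $M_h\vee 0\le A_h+\mathrm{osc}_{Rh}$. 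Moreover $(M_h)^-\to 0$ in probability: for $x\in\Omega_h$ near $\partial\Omega$ the variance $\E\varphi_h(x)^2=G_h(x,x)$ converges to the continuous variance $G(x,x)$, which is continuous and vanishes on $\partial\Omega$ and is thus uniformly small there, so $\PP(M_h<-\varepsilon)\le\PP(|\varphi_h(x)|>\varepsilon)\le G_h(x,x)/\varepsilon^2\to 0$. Hence, once we know $\mathrm{osc}_{Rh}\to 0$ in probability, we get $M_h\vee 0=A_h+o_{\PP}(1)\Rightarrow X$ and therefore $M_h=(M_h\vee 0)-(M_h)^-\Rightarrow X$, which is non-degenerate, proving the corollary.

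\emph{Step 3: the main obstacle -- a uniform discrete H\"older estimate.} The crux is to prove $\mathrm{osc}_{Rh}\to 0$ in probability, for which I would establish the bound
\[
\E\bigl|\varphi_h(x)-\varphi_h(y)\bigr|^2\le C\,|x-y|^{2\beta}\qquad\text{for all }x,y\in h\Z^d,
\]
with $\beta>0$ and $C$ independent of $h$ -- the statement that the discrete field inherits, down to the lattice scale, the H\"older regularity of the continuous FGF. To prove this I would write the discrete Dirichlet Green's function $G_h(x,y)=\E[\varphi_h(x)\varphi_h(y)]$ as the translation-invariant whole-space Green's function of $(-\Delta_h)^s$ plus a Dirichlet correction. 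For the whole-space part, the second difference $G_h^{\R^d}(x,x)-2G_h^{\R^d}(x,y)+G_h^{\R^d}(y,y)$ equals, after substituting $\xi\mapsto\xi/h$, a fixed constant times $h^{2s-d}\int_{[-\pi,\pi]^d}\frac{1-\cos((x-y)\cdot\eta/h)}{M_1(\eta)^{2s}}\dd\eta$, and the elementary estimate $\int_{[-\pi,\pi]^d}\frac{1-\cos(w\cdot\eta)}{M_1(\eta)^{2s}}\dd\eta\le C|w|^{2\beta}$ (for $0<\beta<\min\{H,1\}$, the restriction coming from the $|\eta|^{-2s}$ singularity at the origin) reproduces exactly the scale-invariant bound $C|x-y|^{2\beta}$. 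The Dirichlet correction solves a discrete polyharmonic problem with smooth data and is much more regular; its increments (and the boundary decay of $G_h(\cdot,\cdot)$ used in Step 2) are controlled uniformly in $h$ by the finite-difference and Green's-function estimates underpinning Theorem~\ref{t:scalinglimit_sob_hold} (in the spirit of Theorem~\ref{t:error_estimate}). Granting the displayed bound, $\mathrm{osc}_{Rh}$ is the maximum of at most $Ch^{-d}$ centered Gaussians of variance at most $C(Rh)^{2\beta}$, so the standard Gaussian maximal inequality gives $\E[\mathrm{osc}_{Rh}]\le Ch^{\beta}\sqrt{\log(1/h)}\to 0$, and Markov's inequality concludes. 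Every step except this last H\"older estimate is soft, which is why -- with Theorem~\ref{t:scalinglimit_sob_hold} and the estimates behind it in hand -- the corollary is indeed ``straightforward''.
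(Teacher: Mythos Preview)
Your Steps~1 and~2 are essentially the paper's argument: choose a B-spline $\Theta$ with the partition-of-unity property, invoke Theorem~\ref{t:scalinglimit_sob_hold} to get $I_h\varphi_h\to\varphi$ in $C^{0,\alpha}$, observe that $I_h\varphi_h$ is a convex combination of nearby lattice values, and reduce the problem to showing that the lattice-scale oscillation $\mathrm{osc}_{Rh}$ tends to zero in probability.

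The real difference is in how this oscillation bound is obtained, and here your Step~3 takes an unnecessarily hard route with a genuine gap. You propose to prove $\E|\varphi_h(x)-\varphi_h(y)|^2\le C|x-y|^{2\beta}$ via a decomposition of the discrete Dirichlet Green's function into whole-space plus Dirichlet correction. The whole-space part is fine, but the Dirichlet correction is \emph{not} controlled by anything in the paper: Theorem~\ref{t:error_estimate} and the tightness estimates behind Theorem~\ref{t:scalinglimit_sob_hold} are all in Sobolev/Besov norms, not pointwise, and as the introduction emphasizes, for $s>1$ with zero boundary data there is no random walk representation and pointwise Green's-function bounds near the boundary are precisely the hard part. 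The same applies to your treatment of $(M_h)^-$, which relies on $G_h(x,x)\to G(x,x)$ near $\partial\Omega$ --- again not available from the paper's machinery.

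The paper sidesteps all of this with a purely linear-algebraic observation. On each cell $x+(0,h/2)^d$, the map $f_h\mapsto I_hf_h$ is a linear bijection between two finite-dimensional spaces of the same dimension (lattice values in a $k^d$-block versus polynomials of degree $<k$ in each variable). Equivalence of norms on finite-dimensional spaces then gives, after quotienting out constants,
\[
\max_{|y-y'|_\infty\le kh}\bigl|\varphi_h(y)-\varphi_h(y')\bigr|\le Ch^\alpha\,[I_h\varphi_h]_{C^{0,\alpha}(\R^d)}.
\]
Since $I_h\varphi_h$ is tight in $C^{0,\alpha}$ (it converges there!), the right-hand side is $O_{\PP}(h^\alpha)$, and you are done --- no Green's function analysis needed. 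This also makes your separate treatment of $(M_h)^-$ unnecessary: on the tight event one directly obtains $|\varphi_h(x)-I_h\varphi_h(x)|\le CMh^\alpha$ for every lattice point $x$, so the two maxima differ by at most $CMh^\alpha$.
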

While this corollary covers the subcritical case, the critical case ($d=2s$) and the supercritical case ($d>2s$) remain open, and we hope to address them in the future. In particular, a study of the critical case would be very interesting, as most existing examples of discrete log-correlated fields in the literature are in $d=2$ or some other even dimension while the $\frac32$-discrete FGF, for instance, would be a natural example of a log-correlated field in odd dimensions.

\section{Fractional polyharmonic Gaussian fields}
\label{sec:FGF}
In this section, we give precise definitions for the continuous and discrete FGF. For the continuous FGF, we follow \cite{LSSW16}. The major difference is that we only require Lipschitz continuity of the boundary of our domain $\Omega$ (and hence our results cover in particular the important special case $\Omega=(0,1)^d$). Because of this, several functional-analytic statements require extra attention and we give precise references for the results we use.

\subsection{The (continuous) fractional Gaussian field}
\label{ssec:cont-FGF}
We first fix our conventions for the Fourier transform, and then use it to define some relevant function spaces.

For a function $u\colon\, \R^d\to\R$, we let $\F[u](\xi)\colon\, \R^d\to\R$, defined by
\[\F[u](\xi)\coloneqq \int_{\R^d}\e^{-i\xi\cdot x}u(x)\dd x,\]
be its continuous Fourier transform. Then, we have the Fourier inversion formula,
\[u(x)=\F^{-1}[\F [u]](x)=\frac{1}{(2\pi)^d}\int_{\R^d}\e^{i\xi\cdot x}\F[u](\xi)\dd\xi,\]
and Plancherel's theorem,
\[\int_{\R^d}|u(x)|^2\dd x=\frac{1}{(2\pi)^d}\int_{\R^d}|\F[u](\xi)|^2\dd\xi.\]
We can also define the Sobolev norms
\begin{align*}
\|u\|^2_{\dot H^s(\R^d)}&\coloneqq \int_{\R^d}|\xi|^{2s}|\F[u](\xi)|^2\dd\xi,\\
\|u\|^2_{H^s(\R^d)}&\coloneqq \int_{\R^d}(1+|\xi|^2)^s|\F[u](\xi)|^2\dd\xi,
\end{align*}
where we note that $|\xi|^2$ is the Fourier multiplier of the Laplacian $-\Delta$.

Let $\Omega\subset\R^d$ be a bounded domain with Lipschitz boundary and let $\mathcal{S}'(\R^d)$ be the space of tempered distributions on $\R^d$. In what follows, we collect some results on fractional Sobolev spaces on $\Omega$. If $\Omega$ has a smooth boundary, all of them are well-known (and \cite[Chapter 4]{T78} is a comprehensive reference). If $\Omega$ has merely Lipschitz boundary, the situation is slightly more complicated and we  rely on the reference \cite{T02}. A brief version of some of these results is also contained in \cite[Section 4.1]{LSSW16}, but some of them are not made explicit there.

For $s\ge0$, let $\dot{\tilde H}^s(\Omega)$ be the closure of $C_c^\infty(\Omega)$ with respect to the norm $\|\cdot\|_{\dot H^s(\R^d)}$ \footnote{~In \cite{LSSW16} this space is denoted $H^s_0(\Omega)$. However, more commonly $\dot H^s_0(\Omega)$ is defined as the closure of $C_c^\infty(\Omega)$ with respect to the norm $\|\cdot\|_{\dot H^s(\Omega)}$, while our space $\dot{\tilde H}^s(\Omega)$ is equal to the Lions--Magenes space (which is also denoted by $\dot H^s_{00}(\Omega)$). The two spaces are different whenever $s\in\N+\frac12$. Our notation is based on the one in \cite[Chapter 4]{T78}.} and let $\dot H^{-s}(\Omega)$ be its dual space. The space $\dot H^{-s}(\Omega)$ can alternatively be described as follows. Let
\[\|u\|_{\dot H^{-s}(\Omega)}\coloneqq \inf_{\substack{v\in \dot H^{-s}(\R^d)\\u=v \text{ in }\Omega}}\|v\|_{\dot H^{-s}(\R^d)}\]
for $u\in\mathcal{S}(\R^d)$, where $H^{-s}(\R^d)$ is the dual space of $H^s(\R^d)$. Then, if $\mathcal{S}(\Omega)$ is the quotient of $\mathcal{S}(\R^d)$ under the equivalence relation that identifies functions when they agree in $\Omega$, we have that $\dot H^{-s}(\Omega)$ is the closure of $\mathcal{S}(\Omega)$ with respect to the norm $\|\cdot\|_{\dot H^{-s}(\Omega)}$ (this follows from \cite[Theorem 3.5 (i)]{T02} upon observing that our spaces $\dot{\tilde H}^s(\Omega)$, for $s\ge 0$, and $\dot H^{s}(\Omega)$, for $s<0$, are equal to Triebel's $\bar F^s_{2,\,2}(\Omega)=\bar B^s_{2,\,2}(\Omega)$ by \cite[Proposition 3.1]{T02}). Also, by \cite[Theorem 3.5 (ii)]{T02}, $\dot{\tilde H}^s(\Omega)$ for $s>0$ is equal to the space of functions in $H^s(\R^d)$ that are supported in $\bar\Omega$.

From the Lax--Milgram lemma (and the fact that $C_c^\infty(\Omega)$ is dense in $\dot{\tilde H}^s(\Omega)$) we also obtain that $(-\Delta)^s$ is an isometry from $\dot{\tilde H}^s(\Omega)$ to $\dot H^{-s}(\Omega)$.

For convenience (and with a slight abuse of notation) we define
\[\dot H^s(\Omega)\coloneqq \begin{cases}\dot{\tilde H}^s(\Omega)& \text{if } s\ge 0,\\\dot H^{s}(\Omega)& \text{if } s<0.\end{cases}\]
This scale of Hilbert spaces has various desirable properties. For any $s<t$, the embedding from $\dot H^t(\Omega)$ to $\dot H^s(\Omega)$ is compact (cf. \cite[Theorem 2.7]{T02}). Even more importantly, the spaces form an interpolation scale with respect to complex (or equivalently real) interpolation (cf. \cite[Theorem 3.5 (iv)]{T02}).

In \cite[Section 4.2]{LSSW16}, the continuous FGF is defined as a probability measure $\PP$ on $\mathcal{S}'(\R^d)$. More precisely, it is defined such that when $\varphi$ is distributed according to $\PP$, then for every Schwartz function $f\in\mathcal{S}(\R^d)$ we have that $(\varphi,f)$ is a centered Gaussian with variance $\|f\|^2_{\dot H^{-s}(\Omega)}$. By \cite[Theorem 2.3 and Proposition 2.4]{LSSW16}, this property defines $\PP$ as a probability measure on $\mathcal{S}'(\R^d)$ uniquely.

Let us remark that one can one alternatively define the FGF as a random sum of eigenfunctions of $(-\Delta)^s$. We give details on this in  Appendix \ref{app:alternative}.

The regularity of $\varphi$ is best measured in Besov spaces. For $s'\in\R$, $p,q\in[1,\infty]$, we let $\|\cdot\|_{B^{s'}_{p,\,q}(\R^d)}$ be the usual Besov norm (defined,  e.g., via Littlewood--Paley decomposition or via wavelets; see, for example, \cite[Chapter 2]{T78}) and let $\hat B^{s'}_{p,\,q}(\R^d)$ be the closure of $C_c^\infty(\R^d)$ with respect to the norm $\|\cdot\|_{B^{s'}_{p,\,q}(\R^d)}$ \footnote{~We note that the Besov space $B^{s'}_{p,\,q}(\R^d)$ is commonly defined as the set of all tempered distributions for which $\|\cdot\|_{B^{s'}_{p,\,q}(\R^d)}$ is finite. Clearly $\hat B^{s'}_{p,\,q}(\R^d)\subset B^{s'}_{p,\,q}(\R^d)$, and the inclusion is strict if $p=\infty$ or $q=\infty$.}.

Then we have the following regularity results for $\varphi$.

\begin{proposition}[Regularity of the FGF]\label{p:regularityFGF}
	Let $\Omega\subset\R^d$ be a bounded domain with Lipschitz boundary and let $s\ge0$.
	For any $s'<s-\frac d2$ and $p,q\in[1,\infty]$, the FGF on $\Omega$ is $\PP$-almost surely an element of $\hat B^{s'}_{p,\,q}(\R^d)$.
	
In particular, for any $s'<s-\frac d2$, the FGF on $\Omega$ is $\PP$-almost surely an element of $\dot H^{s'}(\Omega)$.
	
	Moreover, if $H\coloneqq s-\frac d2>0$, then the FGF on $\Omega$ is also $\PP$-almost surely an element of $C^{m,\,\alpha}_{\mathrm{loc}}(\R^d)$ for $m\coloneqq \lceil H\rceil -1$ and any $0<\alpha< H-m$. 
\end{proposition}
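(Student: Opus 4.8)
I would prove the Besov-space statement first and deduce the Sobolev and Hölder statements from it. Recall that $\varphi$ is characterised by the requirement that $(\varphi,f)$ be a centred Gaussian with $\Var((\varphi,f))=\|f\|_{\dot H^{-s}(\Omega)}^2$ for every $f\in\mathcal{S}(\R^d)$. Since the $\dot H^{-s}(\Omega)$-norm depends only on the restriction of $f$ to $\Omega$, testing $\varphi$ against a countable dense family of Schwartz functions supported away from $\overline\Omega$ shows that $\varphi$ is $\PP$-a.s.\ supported in $\overline\Omega$. To read off the Besov regularity I would use the Littlewood--Paley decomposition $\varphi=\sum_{j\ge0}\Delta_j\varphi$: the kernel of $\Delta_j$ is $K_{j,x}(\cdot)=2^{jd}\check\psi\big(2^j(x-\cdot)\big)\in\mathcal{S}(\R^d)$, so $\Delta_j\varphi$ is a smooth function and $\Delta_j\varphi(x)=(\varphi,K_{j,x})$ is a genuine centred Gaussian variable. (One could alternatively use the random-series representation of Appendix~\ref{app:alternative}, but that would require bounds on the eigenfunctions of a nonlocal operator on a Lipschitz domain, which is less convenient.)

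The crux is the variance bound
\[\Var\big(\Delta_j\varphi(x)\big)=\|K_{j,x}\|_{\dot H^{-s}(\Omega)}^2\le C\,2^{j(d-2s)}\qquad\text{for all }j\ge0,\ x\in\R^d.\]
For $j\ge1$ I would bound $\|K_{j,x}\|_{\dot H^{-s}(\Omega)}\le\|K_{j,x}\|_{\dot H^{-s}(\R^d)}$ (the extension by itself is admissible in the infimum defining the $\dot H^{-s}(\Omega)$-norm) and note that $\widehat{K_{j,x}}$ is bounded and supported in $\{|\xi|\sim2^j\}$, so $\|K_{j,x}\|_{\dot H^{-s}(\R^d)}^2\lesssim2^{-2js}\cdot2^{jd}$; using instead the cruder estimate $\|K_{j,x}\|_{\dot H^{-s}(\Omega)}\le C\|K_{j,x}\|_{L^2}$ would give only the exponent $d$, so passing through the full-space norm is exactly what produces the gain $2^{-2js}$. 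For $j=0$ the full-space norm may be infinite, and I would use $\|K_{0,x}\|_{\dot H^{-s}(\Omega)}\le C\|K_{0,x}\|_{L^2(\Omega)}\le C$, which follows by duality from the Poincaré inequality $\|g\|_{L^2(\Omega)}\le C\|g\|_{\dot H^s(\R^d)}$ for $g\in C_c^\infty(\Omega)$. The same $L^2(\Omega)$-bound together with the rapid decay of $\check\psi$ shows $\|K_{j,x}\|_{\dot H^{-s}(\Omega)}$ is negligibly small once $\dist(x,\Omega)\gtrsim1$, whence $\int_{\R^d}\Var(\Delta_j\varphi(x))^{p/2}\,\dd x\le C\,2^{jp(d-2s)/2}$ for $p\in[1,\infty)$.

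With the variance bound in hand I would, for $p,q<\infty$, use the Gaussian moment identity $\E|\Delta_j\varphi(x)|^p=C_p\Var(\Delta_j\varphi(x))^{p/2}$ to get $\E\|\Delta_j\varphi\|_{L^p(\R^d)}^p\le C\,2^{jp(d-2s)/2}$, and then (Jensen if $q\le p$, Minkowski's integral inequality if $q>p$) $\E\|\varphi\|_{B^{s'}_{p,q}(\R^d)}^q\le C\sum_{j\ge0}2^{jq(s'-(s-\frac d2))}<\infty$ whenever $s'<s-\frac d2$, hence $\varphi\in B^{s'}_{p,q}(\R^d)$ a.s. For the endpoint cases $p=\infty$ or $q=\infty$ a plain moment computation is not enough; instead I would combine Dudley's entropy bound for the band-limited field $\Delta_j\varphi$, which gives $\E\|\Delta_j\varphi\|_{L^\infty(\R^d)}\le C\sqrt j\,2^{j(d-2s)/2}$ (the logarithmic loss being harmless since $s'<s-\frac d2$ \emph{strictly}), with Gaussian concentration, to conclude $\E[\|\varphi\|_{B^{s'}_{p,q}(\R^d)}^r]<\infty$ for a suitable $r>0$ and hence $\varphi\in B^{s'}_{p,q}(\R^d)$ a.s. Finally, since $\varphi$ is a.s.\ compactly supported and lies in $B^{s''}_{p,q}(\R^d)$ for every $s''<s-\frac d2$, mollifying ($\varphi*\rho_\varepsilon\in C_c^\infty(\R^d)$, and $\varphi*\rho_\varepsilon\to\varphi$ in $B^{s'}_{p,q}$ for $s'$ slightly below $s''$) exhibits $\varphi$ as a $B^{s'}_{p,q}$-limit of $C_c^\infty$ functions, i.e.\ $\varphi\in\hat B^{s'}_{p,q}(\R^d)$ a.s.

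For the Sobolev consequence I would use $B^{s'}_{2,2}(\R^d)=H^{s'}(\R^d)$ together with the identification (recalled in Section~\ref{ssec:cont-FGF}, based on \cite{T02}) of $\dot H^{s'}(\Omega)=\dot{\tilde H}^{s'}(\Omega)$ with the distributions in $B^{s'}_{2,2}(\R^d)$ supported in $\overline\Omega$; if $s'$ happens to be an exceptional half-integer value, one first passes to a slightly larger $s''<s-\frac d2$ and uses that for compactly supported functions in a bounded domain the higher-order homogeneous norm controls the lower one. For the Hölder consequence, writing $s'=m+\alpha$ with $m=\lceil H\rceil-1$ and $0<\alpha<H-m\le1$ (so that $s'<H=s-\frac d2$ and $s'$ is not an integer), the identity $B^{s'}_{\infty,\infty}(\R^d)=C^{m,\alpha}(\R^d)$ gives $\varphi\in C^{m,\alpha}(\R^d)\subset C^{m,\alpha}_{\mathrm{loc}}(\R^d)$ a.s. I expect the variance estimate itself to be short; the genuinely delicate steps should be the endpoint cases $p=\infty$ or $q=\infty$ (which force a chaining/concentration argument rather than a moment computation) and the passage from ``$B^{s'}_{p,q}$ and supported in $\overline\Omega$'' to membership in the closures $\hat B^{s'}_{p,q}(\R^d)$ and $\dot H^{s'}(\Omega)$, which leans on the functional-analytic facts about Lipschitz domains quoted earlier.
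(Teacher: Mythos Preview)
Your argument is correct, but it follows a genuinely different route from the paper's. The paper does not prove the Besov regularity directly: instead it points out that this is an immediate consequence of Theorem~\ref{t:scalinglimit_sob_hold} (the FGF is obtained there as a limit, in the $\hat B^{s'}_{p,q}$-topology, of the interpolated discrete fields), and alternatively that one could feed the constant sequence $\varphi^{(m)}=\varphi$ into the tightness criterion of Lemma~\ref{l:criterion_tightness} (taken from \cite{FM17}). The Sobolev and H\"older statements are then deduced via the same embeddings you use, with \cite[Proposition~6.2 and Theorem~8.3]{LSSW16} quoted as an alternative route to the H\"older regularity.

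Your approach---variance bounds on Littlewood--Paley blocks, Gaussian moment computation for $p,q<\infty$, and Dudley/Borell--TIS for the endpoint cases---is self-contained and does not rely on the scaling-limit machinery or the wavelet-based tightness criterion; in that sense it is more direct. The paper's proof is much shorter on the page because it outsources the work to the (considerably heavier) proof of Theorem~\ref{t:scalinglimit_sob_hold}, which in turn rests on the finite-difference estimate of Theorem~\ref{t:error_estimate}. Conceptually the two arguments are close cousins: your variance bound $\|K_{j,x}\|_{\dot H^{-s}(\Omega)}^2\lesssim 2^{j(d-2s)}$ is the continuous analogue of the estimate \eqref{e:tightness_besov4} established in the proof of Theorem~\ref{t:scalinglimit_sob_hold}, and both feed into the same Besov machinery.
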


\begin{proof}
The Besov regularity could be shown using the tightness criterion in Lemma \ref{l:criterion_tightness} below applied to the constant sequence $\varphi^{(m)}=\varphi$. However, according to Theorem \ref{t:scalinglimit_sob_hold}, we have the much stronger statement that the FGF is the limit (with respect to the $\hat B^{s'}_{p,\,q}(\R^d)$-topology) of the discrete fractional Gaussian fields, suitably interpolated; so we do not give details for the proof of the Besov regularity here.

It is well-known that $\hat B^{s'}_{2,\,2}(\R^d)=H^{s'}(\R^d)$ and $\hat B^{s'}_{\infty,\,\infty}(\R^d)\hookrightarrow\mathcal{C}^{s'}(\R^d)$, where $\mathcal{C}^{s'}(\R^d)$ is the Hölder--Zygmund space, which embeds into the classical Hölder space $C^{\lfloor s''\rfloor, \, s''-\lfloor s''\rfloor}$ for any $0<s''<s'$ (see \cite[Section 2.1]{T02}). These results together with the fact that the FGF is supported in $\Omega$ easily imply the Sobolev and Hölder regularity results in the proposition.

Let us remark that the Sobolev regularity alternatively follows from the fact the random series defining $\tilde\varphi$ converges in $\dot H^{s'}(\Omega)$ almost surely, while the Hölder regularity also follows from \cite[Proposition 6.2 and Theorem 8.3]{LSSW16}\footnote{~N.B. There is a typo in the statement of \cite[Proposition 6.2]{LSSW16}: it should read $H-k$ instead of $H-\lceil H\rceil$.}.
\end{proof}

\subsection{The (discrete) fractional Gaussian field}
\label{ssec:discrete-FGF}
Our definition of the discrete FGF follows the one of the continuous FGF as closely as possibly. Let us again begin by fixing our conventions for discrete Fourier transforms and discrete function spaces.

For a function $u_h\colon\,h\Z^d\to\R$, we let $\F_h[u_h](\xi)\colon\, \R^d\to\R$, defined by
\[\F_h[u_h](\xi)\coloneqq h^d\sum_{x\in h\Z^d}\e^{-i\xi\cdot x}u_h(x),\]
be its discrete Fourier transform (we note that this function is $\frac{2\pi}{h}$-periodic). Then we have the discrete Fourier inversion formula,
\[u_h(x)=\F_h^{-1}[\F_h[u_h]](x)=\frac{1}{(2\pi)^d}\int_{\left(-\frac{\pi}{h},\frac{\pi}{h}\right)^d}\e^{i\xi\cdot x}\F_h[u_h](\xi)\dd\xi,\]
and Plancherel's theorem,
\[h^d\sum_{x\in h\Z^d}|u_h(x)|^2=\frac{1}{(2\pi)^d}\int_{\left(-\frac{\pi}{h},\frac{\pi}{h}\right)^d}|\F_h[u_h](\xi)|^2.\]
We can define the discrete Sobolev norms
\begin{align*}
\|u_h\|^2_{\dot H^s_h(h\Z^d)}&\coloneqq \int_{\left(-\frac{\pi}{h},\frac{\pi}{h}\right)^d}M_h(\xi)^{2s}|\F_h[u_h](\xi)|^2\dd\xi,\\
\|u_h\|^2_{H^s_h(h\Z^d)}&\coloneqq \int_{\left(-\frac{\pi}{h},\frac{\pi}{h}\right)^d}(1+M_h(\xi)^2)^s|\F_h[u_h](\xi)|^2\dd\xi,
\end{align*}
where
\(M_h(\xi)^2\coloneqq \sum_{j=1}^d\frac{4}{h^2}\sin^2\left(\frac{\xi_j h}{2}\right)\)
is the discrete Fourier multiplier of the discrete Laplacian. We remark that, for $s=0$, we recover the space $L^2_h$.

Let $\Omega$ be as before and let $\Omega_h=\Omega\cap h\Z^d$.  Similarly as in the continuous setting, we define the space $\dot{\tilde H}^s_h(\Omega_h)$ as the space of functions $h\Z^d\to\R$ that vanish outside of $\Omega_h$ (equipped with the norm induced by $\|\cdot\|_{\dot H^s_h(h\Z^d)}$). We let $\dot H^{-s}_h(\Omega_h)$ be its dual space, and define
\[\dot H^s(\Omega_h)\coloneqq \begin{cases}\dot{\tilde H}^s_h(\Omega_h)& \text{if } s\ge 0,\\\dot H^{s}_h(\Omega_h)& \text{if } s<0.\end{cases}\]

We define the discrete FGF as a probability measure on $\dot{\tilde H}^s(\Omega_h)$. More precisely, we consider the measure
\begin{align*}
\PP_h(\dd\varphi_h)&=\frac{1}{Z_h}\exp\left(-\frac12\|\varphi_h\|_{\dot H^s_h(h\Z^d)}^2\right)\prod_{x\in\Omega_h}\dd\varphi_h(x)\prod_{x\in h\Z^d\setminus\Omega_h}\delta_0(\dd\varphi_h(x))\\
&=\frac{1}{Z_h}\exp\left(-\frac12\sum_{x\in \Omega_h}h^d\varphi_h(x)(-\Delta_h)^s\varphi_h(x)\right)\prod_{x\in\Omega_h}\dd\varphi_h(x)\prod_{x\in h\Z^d\setminus\Omega_h}\delta_0(\dd\varphi_h(x)).
\end{align*}
This is a well-defined Gaussian measure with mean 0 and variance
\begin{align}\label{eq:var-sf}
\E_h(\varphi_h,f_h)_{L^2_h(h\Z^d)}^2=\|f_h\|_{\dot H^{-s}_h(\Omega_h)}^2
\end{align}
for any $f_h\colon\, h\Z^d\to\R$.

Indeed, \eqref{eq:var-sf} follows from the fact that, if $X\sim\mathcal{N}(0,\Sigma)$ is a multivariate Gaussian, then $\E(X,v)^2=(v,\Sigma^{-1}v)$, together with the observation that, if $u_h\in \dot H^s_h(\Omega_h)$ is such that $(-\Delta_h)^su_h=f_h$ in $\Omega_h$ (and $u_h$ is 0 in $(h\Z^d)\setminus \Omega_h$), then
\[(f_h,u_h)_{L^2_h(\Omega_h)}=(f_h,u_h)_{L^2_h(h\Z^d)}=\|u_h\|_{\dot H^s_h(h\Z^d)}^2=\|u_h\|_{\dot H^s_h(\Omega_h)}^2=\|(-\Delta_h)^s u_h\|_{\dot H^{-s}_h(\Omega_h)}^2=\|f_h\|_{\dot H^{-s}_h(\Omega_h)}^2.\]
This calculation used the fact that $(-\Delta_h)^s$ is an isomorphism from $ H^s_h(\Omega_h)$ to $\dot H^{-s}_h(\Omega_h)$.
\begin{remark}
For $s=1$ and $s=2$, respectively, this agrees (up to a possible rescaling of the lattice) with the standard definitions for the Gaussian free field \cite{S07} and the membrane model \cite{CDH19,S20} in the literature. For $0<s<1$, as already mentioned in the introduction, our definition is not the same as the one in the survey \cite[Section 12]{LSSW16}
\footnote{\label{fn:LSSW}~We note that there are a few small errors in \cite[Section 12.2]{LSSW16}. In particular, with the definition of a discrete FGF given there, \cite[Proposition 12.2]{LSSW16} is false. The correct definition should have the density 
\begin{equation}\label{eq:FGF_LSSW}
\exp\left(-\frac12\,\sum_{\substack{x,\,y\in\delta\Z^d\\ x\neq y}}C_{d,s}\frac{|f(x)-f(y)|^2}{|x-y|^{d+2s}}\delta^{2d}\right)
\end{equation} while, in \cite[Section 12.2]{LSSW16}, $\delta^d$ is used in place of $\delta^{2d}$. Only the definition \eqref{eq:FGF_LSSW} has the property that if we send $\delta\to0$, we formally get the density
\[\exp\left(-\frac12\int_{\R^d}\int_{\R^d} C_{d,s}\frac{|f(x)-f(y)|^2}{|x-y|^{d+2s}}\dd x\dd y\right)\]
of the continuous FGF.

The error in the proof of \cite[Proposition 12.2]{LSSW16} is in (12.10). The process $(Y^\delta_t)_{t=0}^\infty$ converges to $(Y_t)_{t=0}^\infty$ pathwise, and so the occupation measure of $A\cap \delta\Z^d$ of the former process converges to the occupation measure of $A$ of the latter. Thus, the additional factor of $\delta^d$ on the left-hand side of (12.10) is erroneous.

With this error corrected, the proof of \cite[Proposition 12.2]{LSSW16} does show that the discrete FGF with density \eqref{eq:FGF_LSSW} converges in the sense of distributions.}
. Their definition would correspond to the operator $\widetilde{(-\Delta_h)^s}$, where $\widetilde{(-\Delta_h)^s}u_h(x)=-C_{d,s}\sum_{y\in h\Z^d\setminus\{0\}}h^d\frac{u_h(y)-u_h(x)}{|x-y|^{d+2s}}$. From the point of view of numerical analysis, this discretization is quite degenerate (cf. the discussion at the beginning of Section 4.4 in \cite{HO16}). 
 
Nonetheless our methods apply to other discretizations of the fractional FGF as well, including the one in \cite{LSSW16}. The basis for all our arguments is  \eqref{e:secorderapprox}. If, instead of $O(h^2|\xi|^2)$, one only had $O(h^\kappa|\xi|^\kappa)$ for some $0<\kappa\le2$, then Theorem \ref{t:error_estimate} would hold with the restriction $t-s\le \kappa$. For the application towards Theorems \ref{t:scalinglimit_distributions} and \ref{t:scalinglimit_sob_hold}, such an estimate for some $\kappa>0$ is good enough. So our results on the scaling limit of the discrete FGF also hold for other models as long as we can show a version of \eqref{e:secorderapprox} with an error term $O(h^\kappa|\xi|^\kappa)$. That is, we need good control over the asymptotics of the symbol of the corresponding operator. In general, it can be non-trivial to obtain these asymptotics. However, for the case of $\widetilde{(-\Delta_h)^s}$, or more generally for schemes of the form $-C_{d,s}\sum_{y\in h\Z^d\setminus\{0\}}h^d k(|x-y|)(u_h(y)-u_h(x))$ for $k(x)=\frac{1}{|x|^{d+2s}}\left(1+O\left(\frac{1}{|x|^{\kappa'}}\right)\right)$ for $0<s<1$, the relevant calculations can be done as in \cite[Appendix]{H11}. In particular, also for the version of the discrete FGF from \cite{LSSW16} we have not just Theorem \ref{t:scalinglimit_distributions} (as shown in \cite[Proposition 12.2]{LSSW16}), but also Theorem \ref{t:scalinglimit_sob_hold}. This essentially answers the question in \cite[Remark 6]{G23}.
\end{remark}

\begin{remark}[Boundary values]\label{rk:prob}
Let us comment on our choice of boundary values. The main advantage of our definition is the fact that it is consistent with projections. Namely, let $\Omega\subset\tilde\Omega$ be open sets  and consider 
the discrete FGF $\tilde\varphi_h$ on $\tilde\Omega_h$. Then,  the restriction of $\tilde\varphi_h$ to $\Omega_h$ is equal in distribution to the sum of the $(-\Delta_h)^s$-harmonic extension of $\tilde\varphi_h$ from $h\Z^d\setminus\Omega_h$ to $\Omega_h$ and of an independent discrete FGF $\varphi_h$ on $\Omega_h$ \footnote{~If $s=1$, this reduces to the familiar domain Markov property for the discrete Gaussian free field: the field in a subdomain is equal in distribution to the harmonic extension of its boundary values plus an independent zero-boundary field.}. In particular, even if we had started with a field without boundary values (i.e., with $\tilde\Omega=\R^d$), then looking at the field on a subset naturally leads to consider fields with zero boundary values outside that subset.
\end{remark}

\section{Rigorous estimates for the finite difference scheme}
\label{sec:estimates}

In this section, we present the proof of Theorem \ref{t:error_estimate}. As mentioned in Section \ref{s:findifffrac}, the proof of the analogous statement for $s=2$ in \cite[Theorem 2.3]{S20} was based on the Bramble--Hilbert lemma to estimate various error terms. Thus it relied on the fact that $(-\Delta_h)^2$ (and hence the finite difference scheme)  is local in that case. 

In the generic case $s\not\in\N$, however, $(-\Delta_h)^s$ is not local, and so this proof strategy can no longer be applied. Instead, we use the fact that both $(-\Delta)^s$ and $(-\Delta_h)^s$ are defined via Fourier multipliers and directly estimate all relevant error terms in Fourier space. However, this requires extra care as we need to switch from discrete Fourier space to continuous Fourier space at some point. In fact, we need a way to compare $\F_h$ and $\F$. Fortunately, the following Poisson-type summation formula enables us to do so easily.

\begin{lemma}[Poisson-type summation formula]\label{l:disc_cont_FT}
	Suppose that $g\colon\, \R^d\to\R$ is a Schwartz function. Then we have the identity
	\[\F_h[g](\xi)=\sum_{\zeta\in\frac{2\pi}{h}\Z^d}\F [g](\xi+\zeta).\]
\end{lemma}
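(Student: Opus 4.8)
The plan is to recognize the claimed identity as a rescaled, modulated instance of the classical Poisson summation formula. I will describe two routes; the second is self-contained and does not invoke the classical statement.

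For the first route, I would fix $\xi\in\R^d$ and introduce the auxiliary function $f(x):=g(hx)\,\e^{-ih\xi\cdot x}$. Since $g$ is Schwartz and $x\mapsto\e^{-ih\xi\cdot x}$ is smooth with all derivatives bounded, $f$ is again a (complex-valued) Schwartz function, so the classical Poisson summation formula $\sum_{n\in\Z^d}f(n)=\sum_{m\in\Z^d}\F[f](2\pi m)$ applies, with the Fourier normalization used in this paper. A short computation then identifies the left-hand side with $h^{-d}\F_h[g](\xi)$ (substitute $x=hn$) and the right-hand side with $h^{-d}\sum_{\zeta\in\frac{2\pi}{h}\Z^d}\F[g](\xi+\zeta)$ (using $\F[f](\eta)=h^{-d}\F[g](\xi+\eta/h)$, which follows from the substitution $y=hx$); multiplying through by $h^d$ gives the lemma.

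If one prefers not to cite the classical formula, I would instead argue directly with the right-hand side $F(\xi):=\sum_{\zeta\in\frac{2\pi}{h}\Z^d}\F[g](\xi+\zeta)$. This is manifestly periodic with respect to $\frac{2\pi}{h}\Z^d$, and the rapid decay of $\F[g]\in\mathcal{S}(\R^d)$ makes the series---and all of its derivatives---converge absolutely and locally uniformly, so $F$ is smooth and is represented by its Fourier series over the lattice $\frac{2\pi}{h}\Z^d$, whose dual lattice is $h\Z^d$ and a fundamental domain of which is $(-\frac{\pi}{h},\frac{\pi}{h})^d$, of volume $(2\pi/h)^d$. Computing the $k$-th Fourier coefficient for $k\in h\Z^d$, one unfolds the integral over the fundamental domain to all of $\R^d$---legitimate since $\e^{-ik\cdot\zeta}=1$ for $k\in h\Z^d$ and $\zeta\in\frac{2\pi}{h}\Z^d$, and since the sum converges absolutely---and recognizes the resulting integral, via the Fourier inversion formula, as $h^d g(-k)$. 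Substituting this back into the Fourier series and reindexing $k\mapsto-k$ turns it into exactly $h^d\sum_{x\in h\Z^d}\e^{-i\xi\cdot x}g(x)=\F_h[g](\xi)$.

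I do not expect any genuine obstacle; the argument is essentially bookkeeping. The only points needing (minor) care are the interchange of summation and integration and the convergence of the Fourier series, both immediate from the Schwartz decay of $g$ and of $\F[g]$, and keeping consistent track of the constants $(2\pi)^d$ and the powers of $h$ arising from the two different Fourier transforms $\F$ and $\F_h$.
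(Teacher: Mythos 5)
Your first route is essentially the paper's own proof: the paper applies the (rescaled) Poisson summation formula $h^d\sum_{x\in h\Z^d}f(x)=\sum_{\zeta\in\frac{2\pi}{h}\Z^d}\F[f](\zeta)$ directly to $f(x)=\e^{-i\xi\cdot x}g(x)$, which is the same computation as your substitution $f(x)=g(hx)\e^{-ih\xi\cdot x}$ on the unit lattice, up to where the rescaling is performed. Both your bookkeeping of the factors $h^d$ and $(2\pi)^d$ and the sign in $\F[\e^{-i\xi\cdot}g](\zeta)=\F[g](\xi+\zeta)$ check out against the paper's Fourier conventions, and your second, self-contained route is just the standard proof of Poisson summation (Fourier expansion of the periodization of $\F[g]$ over $\frac{2\pi}{h}\Z^d$) inlined; it is correct but adds nothing beyond making the cited classical result explicit.
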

\begin{proof}
	By the Poisson summation formula (see, e.g., \cite[Chapter  4.4]{MR2485091}), for any Schwartz function $f$, we have
	\[h^d\sum_{x\in h\Z^d}f(x)=\sum_{\zeta\in\frac{2\pi}{h}\Z^d}\F[f](\zeta).\]
	Applying this to $f(x)=\e^{-i\xi\cdot x}g(x)$, we find
	\[h^d\sum_{x\in h\Z^d}\e^{-i\xi\cdot x}g(x)=\sum_{\zeta\in\frac{2\pi}{h}\Z^d}\F[\e^{-i\xi\cdot }g](\zeta)=\sum_{\zeta\in\frac{2\pi}{h}\Z^d}\F [g](\xi+\zeta),\]
which implies the claim.
\end{proof}

Using Lemma \ref{l:disc_cont_FT}, we can now turn to the proof of our estimate on finite difference schemes.
\begin{proof}[Proof of Theorem \ref{t:error_estimate}]
As $u\in H^t(\Omega)$ is supported in $\bar\Omega$, by the discussion in subsection \ref{ssec:cont-FGF} we can approximate it in $\dot H^t(\Omega)$-norm by functions in $C_c^\infty(\Omega)$. So by a density argument it suffices to consider the case that $u\in C_c^\infty(\Omega)$. Then, in particular, $u$ is  a Schwartz function and $\F[u]$ is a Schwartz function as well. Therefore, all integrals and sums below will be well-defined.

\textbf{Step 1.} \emph{Representation of the error.} From the definitions we have
\begin{equation}\label{e:error_estimate}
\|\theta_h \ast u-u_h\|_{\dot H^s_h(h\Z^d)}=\|(-\Delta_h)^s(\theta_h*u-u_h)\|_{\dot H^{-s}_h(\Omega_h)}=\inf_{\substack{v_h\colon\,h\Z^d\to\R\\v_h=(-\Delta_h)^s(\theta_h*u-u_h)\text{ in }\Omega_h}}\|v_h\|_{\dot H^{-s}_h(h\Z^d)}.
\end{equation}
Using Lemma \ref{l:commute}, we can also rewrite, for $x\in\Omega_h$,
\begin{align*}
&(-\Delta_h)^s(\theta_h*u-u_h)(x)\\
&\quad=(-\Delta_h)^s (\theta_h*u)(x)-\Theta_h*f(x)\\
&\quad=(-\Delta_h)^s (\theta_h*u)(x)-\Theta_h*(-\Delta)^su(x)\\
&\quad=\frac{1}{(2\pi)^d}\int_{\left(-\frac{\pi}{h},\frac{\pi}{h}\right)^d}\e^{i\xi\cdot x}M_h(\xi)^{2s}\F_h[\theta_h*u](\xi)\dd\xi-\frac{1}{(2\pi)^d}\int_{\R^d}\e^{i\xi\cdot x}|\xi|^{2s}\F[\Theta_h](\xi)\F[u](\xi)\dd\xi\\
&\quad=I_1+I_2+I_3+I_4+I_5,
\end{align*}
where
\begin{align*}
I_1(x)&\coloneqq \frac{1}{(2\pi)^d}\int_{\left(-\frac{\pi}{h},\frac{\pi}{h}\right)^d}\e^{i\xi\cdot x}M_h(\xi)^{2s}\left(\F_h[\theta_h*u](\xi)-\F[\theta_h*u](\xi)\right)\dd\xi,\\
I_2(x)&\coloneqq \frac{1}{(2\pi)^d}\int_{\left(-\frac{\pi}{h},\frac{\pi}{h}\right)^d}\e^{i\xi\cdot x}M_h(\xi)^{2s}\left(\F[\theta_h*u](\xi)-\F[u](\xi)\right)\dd\xi,\\
I_3(x)&\coloneqq \frac{1}{(2\pi)^d}\int_{\left(-\frac{\pi}{h},\frac{\pi}{h}\right)^d}\e^{i\xi\cdot x}M_h(\xi)^{2s}\left(1-\F[\Theta_h](\xi)\right)\F[u](\xi)\dd\xi,\\
I_4(x)&\coloneqq \frac{1}{(2\pi)^d}\int_{\left(-\frac{\pi}{h},\frac{\pi}{h}\right)^d}\e^{i\xi\cdot x}\left(M_h(\xi)^{2s}-|\xi|^{2s}\right)\F[\Theta_h](\xi)\F[u](\xi)\dd\xi,\\
I_5(x)&\coloneqq -\frac{1}{(2\pi)^d}\int_{\R^d\setminus\left(-\frac{\pi}{h},\frac{\pi}{h}\right)^d}\e^{i\xi\cdot x}|\xi|^{2s}\F[\Theta_h](\xi)\F[u](\xi)\dd\xi.
\end{align*}
We can choose $v_h=I_1+I_2+I_3+I_4+I_5$ in \eqref{e:error_estimate}, and so it suffices to show that
\[\|I_j\|_{\dot H^{-s}_h(h\Z^d)}\le Ch^{t-s}\|u\|_{\dot H^t(\R^d)}\]
holds for each $j\in\{1,2,3,4,5\}$. The cases $j\in\{2,3,4\}$ are easier and we begin with those.

\textbf{Step 2.} \emph{Estimate of $I_2$.} Directly from the definition, we see that
\[\F_h[I_2](\xi)=M_h(\xi)^{2s}\left(\F[\theta_h*u](\xi)-\F[u](\xi)\right)=M_h(\xi)^{2s}\left(\F[\theta_h](\xi)-1\right)\F[u](\xi).\]
% As $\F[\theta_h](\xi)=\F[\theta](h\xi)$, our assumption on $\theta$ implies that $\F[\theta_h](\xi)\le\frac{C}{(1+h|\xi|)^l}\le C$ for $\xi\in\left(-\frac{\pi}{h},\frac{\pi}{h}\right)^d$.
First, we note that $0\le M_h(\xi) \le C|\xi|$ for some constant $C>0$. The assumptions on $\theta$ imply that $\F[\theta](0)=1$ and $\nabla\F[\theta](0)=0$. Furthermore, $\F[\theta]$ is a Schwartz function. So, by Taylor's theorem, there exists a constant $C>0$ such that $|1-\F[\theta](\xi)|\le C|\xi|^2$. Since $\mathcal{F}\left[\theta_{h}\right](\xi)=\mathcal{F}[\theta](h \xi)$, this implies $|1-\F[\theta_h](\xi)|\le Ch^2|\xi|^2$.   Therefore, 
\begin{align*}
\|I_2\|^2_{\dot H^{-s}_h(h\Z^d)}&=\int_{\left(-\frac{\pi}{h},\frac{\pi}{h}\right)^d}M_h(\xi)^{-2s}|\F_h[I_2](\xi)|^2\dd\xi\\
&=\int_{\left(-\frac{\pi}{h},\frac{\pi}{h}\right)^d}M_h(\xi)^{2s}\left|\F[\theta_h](\xi)-1\right|^2|\F[u](\xi)|^2\dd\xi\\
&\le C\int_{\left(-\frac{\pi}{h},\frac{\pi}{h}\right)^d}|\xi|^{2s}h^4|\xi|^4|\F[u](\xi)|^2\dd\xi\\
&\le C\int_{\left(-\frac{\pi}{h},\frac{\pi}{h}\right)^d}|\xi|^{2t}h^{2(t-s)}|\F[u](\xi)|^2\dd\xi\\
&\le C\int_{\left(-\frac{\pi}{h},\frac{\pi}{h}\right)^d}|\xi|^{2t}h^{2(t-s)}|\F[u](\xi)|^2\dd\xi\\
&\le Ch^{2(t-s)}\|u\|^2_{\dot H^t(\R^d)}.
\end{align*}
Here we used that $t-s\le 2$ and thus $h^4|\xi|^4\le Ch^{2(t-s)}|\xi|^{2(t-s)}$.

\textbf{Step 3.} \emph{Estimate of $I_3$.}  The estimate of $I_3$ is quite similar: again, we have that
\[\F_h[I_3](\xi)=\left(1-\F[\Theta_h](\xi)\right)M_h(\xi)^{2s}\F[u](\xi).\]
The assumptions on $\Theta$ imply that $|1-\F[\Theta](\xi)|\le C|\xi|^2$ and hence $|1-\F[\Theta_h](\xi)|\le Ch^2|\xi|^2$. Using these estimates, we can proceed exactly as in Step 2.
% We can now estimate
%\begin{align*}
%\|I_3\|^2_{\dot H^{-s}_h(h\Z^d)}&=\int_{\left(-\frac{\pi}{h},\frac{\pi}{h}\right)^d}M_h(\xi)^{-2s}|\F_h[I_3](\xi)|^2\dd\xi\\
%&=\int_{\left(-\frac{\pi}{h},\frac{\pi}{h}\right)^d}M_h(\xi)^{2s}(1-\F[\Theta_h](\xi))^2|\F[u](\xi)|^2\dd\xi\\
%&\le C\int_{\left(-\frac{\pi}{h},\frac{\pi}{h}\right)^d}|\xi|^{2s}h^4|\xi|^4|\F[u](\xi)|^2\dd\xi\\
%&\le C\int_{\left(-\frac{\pi}{h},\frac{\pi}{h}\right)^d}|\xi|^{2t}h^{2{t-s)}|\F[u](\xi)|^2\dd\xi\\
%&\le Ch^{2(t-s)}\|u\|^2_{\dot H^t(\R^d)}
%\end{align*}

\textbf{Step 4.} \emph{Estimate of $I_4$.} The argument for $I_4$ is very similar to that for $I_2$ and $I_3$: we use the fact that $\left|M_h(\xi)^{2s}-|\xi|^{2s}\right|\le Ch^2|\xi|^2$ and that $|\F[\Theta_h](\xi)|\le C$ and proceed as for $I_3$.

\textbf{Step 5.} \emph{Estimate of $I_1$.}  Here,  we need to compare $\F$ and $\F_h$. Fortunately, we can use Lemma \ref{l:disc_cont_FT} for that purpose. From the definition and Lemma \ref{l:disc_cont_FT}, we have that
\begin{align*}\F_h[I_1](\xi)&=M_h(\xi)^{2s}\left(\F_h[\theta_h*u](\xi)-\F[\theta_h*u](\xi)\right)\\
&=M_h(\xi)^{2s}\sum_{\zeta\in\frac{2\pi}{h}\Z^d\setminus\{0\}}\F[\theta_h*u](\xi+\zeta)\\
&=M_h(\xi)^{2s}\sum_{\zeta\in\frac{2\pi}{h}\Z^d\setminus\{0\}}\F[\theta_h](\xi+\zeta)\F[u](\xi+\zeta).
\end{align*}
Cauchy--Schwarz' inequality then yields 
\begin{align*}
\|I_1\|^2_{\dot H^{-s}_h(h\Z^d)}&=\int_{\left(-\frac{\pi}{h},\frac{\pi}{h}\right)^d}M_h(\xi)^{-2s}|\F_h[I_1](\xi)|^2\dd\xi\\
&=\int_{\left(-\frac{\pi}{h},\frac{\pi}{h}\right)^d}M_h(\xi)^{2s}\left|\sum_{\zeta\in\frac{2\pi}{h}\Z^d\setminus\{0\}}\F[\theta_h](\xi+\zeta)\F[u](\xi+\zeta)\right|^2\dd\xi\\
&\le\int_{\left(-\frac{\pi}{h},\frac{\pi}{h}\right)^d}M_h(\xi)^{2s}\left(\sum_{\zeta\in\frac{2\pi}{h}\Z^d\setminus\{0\}}|\xi+\zeta|^{2t}|\F[u](\xi+\zeta)|^2\right)\left(\sum_{\zeta\in\frac{2\pi}{h}\Z^d\setminus\{0\}}\frac{|\F[\theta_h](\xi+\zeta)|^2}{|\xi+\zeta|^{2t}}\right)\dd\xi.
\end{align*}
We know that $\F[\theta_h](\xi+\zeta)\le\frac{C}{h^l|\xi+\zeta|^l}$. As $2(t+l)>d$, we can bound
\begin{align*}
\sup_{\xi\in \left(-\frac{\pi}{h},\frac{\pi}{h}\right)^d}\sum_{\zeta\in\frac{2\pi}{h}\Z^d\setminus\{0\}}\frac{|\F[\theta_h](\xi+\zeta)|^2}{|\xi+\zeta|^{2t}}&\le C\sup_{\xi\in \left(-\frac{\pi}{h},\frac{\pi}{h}\right)^d}\sum_{\zeta\in\frac{2\pi}{h}\Z^d\setminus\{0\}}\frac{1}{h^{2l}|\xi+\zeta|^{2(t+l)}}\\
&\le C\sum_{\zeta\in\frac{2\pi}{h}\Z^d\setminus\{0\}}\frac{1}{h^{2l}|\zeta|^{2(t+l)}}\\
&\le Ch^{2t}
\end{align*}
and deduce
\begin{align*}
\|I_1\|^2_{\dot H^{-s}_h(h\Z^d)}&\le C\frac{1}{h^{2s}}h^{2t}\int_{\left(-\frac{\pi}{h},\frac{\pi}{h}\right)^d}\sum_{\zeta\in\frac{2\pi}{h}\Z^d\setminus\{0\}}|\xi+\zeta|^{2t}|\F[u](\xi+\zeta)|^2\dd\xi\\
&\le Ch^{2(t-s)}\int_{\R^d}|\xi|^{2t}\F[u](\xi)|^2\dd\xi\\
&\le Ch^{2(t-s)}\|u\|^2_{\dot H^t(\R^d)}.
\end{align*}

\textbf{Step 6.} \emph{Estimate of $I_5$.} The argument is similar to the previous step. We see that
\[\F [I_5](\xi)=-|\xi|^{2s}\F[\Theta_h](\xi)\F[u](\xi)\chi_{\R^d\setminus\left(-\frac{\pi}{h},\frac{\pi}{h}\right)^d}(\xi),\]
where $\chi_A$ is the indicator function of the set $A$. Lemma \ref{l:disc_cont_FT} then implies that, for $\xi\in\left(-\frac{\pi}{h},\frac{\pi}{h}\right)^d$,
\begin{align*}
\F_h[I_5](\xi)&=-\sum_{\zeta\in\frac{2\pi}{h}\Z^d}\F [I_5](\xi+\zeta)\\
&=-\sum_{\zeta\in\frac{2\pi}{h}\Z^d}|\xi+\zeta|^{2s}\F[\Theta_h](\xi+\zeta)\F[u](\xi+\zeta)\chi_{\R^d\setminus\left(-\frac{\pi}{h},\frac{\pi}{h}\right)^d}(\xi+\zeta)\\
&=-\sum_{\zeta\in\frac{2\pi}{h}\Z^d\setminus\{0\}}|\xi+\zeta|^{2s}\F[\Theta_h](\xi+\zeta)\F[u](\xi+\zeta)
\end{align*}
and therefore (recalling that $M_h(\xi)$ is $\frac{2\pi}{h}$-periodic)
\begin{align*}
&\|I_5\|^2_{\dot H^{-s}_h(h\Z^d)}\\
&\quad=\int_{\left(-\frac{\pi}{h},\frac{\pi}{h}\right)^d}M_h(\xi)^{-2s}|\F_h[I_5](\xi)|^2\dd\xi\\
&\quad=\int_{\left(-\frac{\pi}{h},\frac{\pi}{h}\right)^d}M_h(\xi)^{-2s}\left|\sum_{\zeta\in\frac{2\pi}{h}\Z^d\setminus\{0\}}|\xi+\zeta|^{2s}\F[\Theta_h](\xi+\zeta)\F[u](\xi+\zeta)\right|^2\dd\xi\\
&\quad=\int_{\left(-\frac{\pi}{h},\frac{\pi}{h}\right)^d}\left|\sum_{\zeta\in\frac{2\pi}{h}\Z^d\setminus\{0\}}\frac{|\xi+\zeta|^{2s}}{M_h(\xi+\zeta)^s}\F[\Theta_h](\xi+\zeta)\F[u](\xi+\zeta)\right|^2\dd\xi\\
&\quad\le\int_{\left(-\frac{\pi}{h},\frac{\pi}{h}\right)^d}\left(\sum_{\zeta\in\frac{2\pi}{h}\Z^d\setminus\{0\}}|\xi+\zeta|^{2t}|\F[u](\xi+\zeta)|^2\right)\left(\sum_{\zeta\in\frac{2\pi}{h}\Z^d\setminus\{0\}}\frac{|\F[\Theta_h](\xi+\zeta)|^2}{M_h(\xi+\zeta)^{2s}|\xi+\zeta|^{2(t-2s)}}\right)\dd\xi.
\end{align*}
We note that $\F[\Theta_h](\xi)=\F[\Theta](h\xi)$ and so $|\F[\Theta_h](\xi)|\le C\frac{(\sum_{j=1}^d\sin^2(h\xi_j))^{k/2}}{h^k|\xi|^k}$; moreover, $\frac{(\sum_{j=1}^d\sin^2(h\xi_j))^{1/2}}{M_h(\xi)}\le Ch$. Since  $k\ge s$, $M_h(\xi+\zeta)^{2s}$ is controlled by the $\sin$-terms from $|\F[\Theta_h](\xi+\zeta)|^2$ and so we can bound
\begin{align*}
\sup_{\xi\in \left(-\frac{\pi}{h},\frac{\pi}{h}\right)^d}\sum_{\zeta\in\frac{2\pi}{h}\Z^d\setminus\{0\}}\frac{|\F[\Theta_h](\xi+\zeta)|^2}{M_h(\xi+\zeta)^{2s}|\xi+\zeta|^{2t-4s}}&\le C\sup_{\xi\in \left(-\frac{\pi}{h},\frac{\pi}{h}\right)^d}\sum_{\zeta\in\frac{2\pi}{h}\Z^d\setminus\{0\}}\frac{h^{2s}}{h^{2k}|\xi+\zeta|^{2(t+k-2s)}}\\
&\le C\sum_{\zeta\in\frac{2\pi}{h}\Z^d\setminus\{0\}}\frac{1}{h^{2(k-s)}|\zeta|^{2(t+k-2s)}}\\
&\le Ch^{2(t-s)},
\end{align*}
where we used the fact that $2(t+k-2s)>d$. Hence, 
\begin{align*}
\|I_5\|^2_{\dot H^{-s}_h(h\Z^d)}&\le Ch^{2(t-s)}\int_{\left(-\frac{\pi}{h},\frac{\pi}{h}\right)^d}\sum_{\zeta\in\frac{2\pi}{h}\Z^d\setminus\{0\}}|\xi+\zeta|^{2t}|\F[u](\xi+\zeta)|^2\dd\xi\\
&\le Ch^{2(t-s)}\int_{\R^d}|\xi|^{2t}|\F[u](\xi)|^2\dd\xi\\
&\le Ch^{2(t-s)}\|u\|^2_{\dot H^t(\R^d)}.
\end{align*}
This completes the proof.
\end{proof}

\begin{remark}[Usage of the finite difference scheme]\label{r:findiffscheme}
So far we have not said much regarding the practical applications of the finite difference scheme in Theorem \ref{t:error_estimate}. It would go beyond the scope of this work to report on some practical experiments, but let us make a few comments.

In order to use the scheme to approximate a solution of $(-\Delta)^su=f$, a first challenge is to compute the entries of $((-\Delta_h)^s)_{x,\,y\in\Omega_h}$. Even using the translation-invariance of $(-\Delta_h)^s$, we need to compute $O\left(\frac{1}{h^2}\right)$ entries, where each is given as a singular integral. This is quite costly, but avoids introducing an additional error. In fact, the pictures in Figure \ref{fig:ex} were produced using this method.

If one is willing to accept an additional error term, then a more efficient way to compute an approximation to the entries of $((-\Delta_h)^s)_{x,\,y\in\Omega_h}$ was suggested in \cite{HZD21}: choose a parameter $h'\le h$, and approximate the integral over $\left(-\frac{\pi}{h},\frac{\pi}{h}\right)^d$ appearing in the definition of $(-\Delta_h)^s$ by a Riemann sum on a lattice of width $\frac{h'}{h}$. The advantage is that this Riemann sum can be computed very efficiently using the fast Fourier transform. Moreover, in \cite[Section 4.2]{HZD21}, it is suggested that this should lead to an additional error of order $O\left(\frac{h'^{d+2s}}{h^{2s}}\right)$. In other words, if we choose $h'\le h^{(2s+2)/(2s+d)}$, the error should be of order $h^2$ and thus not bigger than the error in Theorem \ref{t:error_estimate}. While the error estimate in \cite[Section 4.2]{HZD21} is not rigorous, it should be possible to give a full proof.
\end{remark}

\section{Proofs of the scaling limits}
\label{sec:proof-m}

\subsection{Scaling limit in the space of distributions} 
\label{ssec:proof-dis}

With Theorem \ref{t:error_estimate} in hand, we are ready to prove that $\varphi$ is indeed the scaling limit of the $\varphi_h$. First, we study the scaling limit in the space of distributions, Theorem \ref{t:scalinglimit_distributions}.

\begin{proof}[Proof of Theorem \ref{t:scalinglimit_distributions}]
	\textbf{Step 1.} \emph{Characterization of the convergence.} Let us consider some $f\in\mathcal{S}(\R^d)$. Both $(I_h\varphi_h,f)_{L^2(\R^d)}$ and $(\varphi,f)_{L^2(\R^d)}$ are centered Gaussian random variables and so it suffices to prove that their variances converge. We have that
	\begin{equation}\label{e:scalinglimit_distributions2}
	\begin{split}
	\E_h(I_h\varphi_h,f)^2&=\E_h\left(\int_{\R^d}\sum_{y\in h\Z^d}h^d\varphi_h(y)\Theta_h(x-y)f(x)\dd x\right)^2\\
	&=\E_h\left(\sum_{y\in h\Z^d}h^d\varphi_h(y)\int_{\R^d}\Theta_h(x-y)f(x)\dd x\right)^2\\
	&=\E_h(\varphi_h,\Theta_h*f)_{L^2_h(h\Z^d)}^2\\
	&=\|\Theta_h*f\|_{\dot H^{-s}_h(\Omega_h)}^2
	\end{split}
	\end{equation}
	and so we only need to prove that
	\begin{equation}\label{e:scalinglimit_distributions1}
	\lim_{h\to\infty}\|\Theta_h*f\|_{\dot H^{-s}_h(\Omega_h)}^2=\|f\|_{\dot H^{-s}(\Omega)}^2.
	\end{equation}

	\textbf{Step 2.} \emph{Representation of the error.}
	For each $h>0$, let $u_h\colon\,h\Z^d\to\R$ be the solution of
	\begin{align*}
	\begin{cases}
	(-\Delta_h)^s u_h(x) =\Theta_h*f(x), & x \in h\Z^d\cap\Omega, \\
	u_h(x)=0, & x \in h\Z^d\setminus\Omega,
	\end{cases}	
	\end{align*}
	and let $u\in H^s(\R^d)$ be the solution of
	\begin{align*}
	\begin{cases}
	(-\Delta)^s u(x)=f(x), &x \in \Omega,\\
	u(x)=0, & x \in \R^d\setminus\Omega.
	\end{cases}	
	\end{align*}
	Moreover, let $\tilde\Theta,\,\tilde\theta$ be functions satisfying the assumptions of Theorem \ref{t:error_estimate} with $k\coloneqq \max\left(\frac d2+s,s\right)$ and $l\coloneqq \max\left(\frac d2-s,0\right)$; for example, let us take $\tilde\Theta$ to be a B-spline of order $\lceil k\rceil$ as in \cite[Section 1.9.4]{JS14} and $\tilde\theta$ any smooth mollifier. Then, let us define $\tilde\Theta_h$ and $\tilde\theta_h$ as before and let $\tilde u_h\colon\,h\Z^d\to\R$ be the solution of
	\begin{align*}
	\begin{cases}
	(-\Delta_h)^s \tilde u_h(x) =\tilde\Theta_h*f(x), & x \in h\Z^d\cap\Omega, \\
	\tilde u_h(x)=0, & x \in h\Z^d\setminus\Omega.
	\end{cases}	
	\end{align*}
	
	Then, we can write
	\begin{equation}\label{e:scalinglimit_distributions3}
	\begin{split}
	\|\Theta_h*f\|_{\dot H^{-s}_h(\Omega_h)}^2-\|f\|_{\dot H^{-s}(\Omega)}^2&=(\Theta_h*f,u_h)_{L^2_h(h\Z^d)}-(f,u)_{L^2(\R^d)}\\
	&=J_1+J_2+J_3+J_4+J_5,
	\end{split}
	\end{equation}
	where
	\begin{align*}
	J_1&\coloneqq (\Theta_h*f,u_h)_{L^2_h(h\Z^d)}-(\Theta_h*f,\tilde u_h)_{L^2_h(h\Z^d)},\\
	J_2&\coloneqq (\Theta_h*f,\tilde u_h)_{L^2_h(h\Z^d)}-(\Theta_h*f,\tilde\theta_h*u)_{L^2_h(h\Z^d)},\\
	J_3&\coloneqq (\Theta_h*f,\tilde\theta_h*u)_{L^2_h(h\Z^d)}-(f,\tilde\theta_h*u)_{L^2_h(h\Z^d)},\\
	J_4&\coloneqq (f,\tilde\theta_h*u)_{L^2_h(h\Z^d)}-(f,\tilde\theta_h*u)_{L^2(\R^d)},\\
	J_5&\coloneqq (f,\tilde\theta_h*u)_{L^2(\R^d)}-(f,u)_{L^2(\R^d)}.
	\end{align*}
	We need to show that $J_i \to 0$ as $h\to0$. This implies \eqref{e:scalinglimit_distributions1}, as required. The most important term is $J_2$, for which we need to use Theorem \ref{t:error_estimate}; the other terms are straightforward to control.
	
		\textbf{Step 3.} \emph{Estimate of $J_2$.}   Let $t>s$ be a constant to be chosen later. Our choices of $k,l$ ensure that the assumptions of Theorem \ref{t:error_estimate} are all satisfied. Theorem \ref{t:error_estimate} and the discrete Poincaré inequality (see Lemma \ref{l:poincare}) then imply that
	\begin{align*}
	J_2&=(\Theta_h*f,\tilde u_h-\tilde\theta_h*u)_{L^2_h(h\Z^d)}\\
	&\le\|\Theta_h*f\|_{L^2_h(h\Z^d)}\|\tilde u_h-\tilde\theta_h*u\|_{L^2_h(h\Z^d)}\\
	&\le C\|\Theta_h*f\|_{L^2_h(h\Z^d)}\|\tilde u_h-\tilde\theta_h*u\|_{\dot H^s_h(h\Z^d)}\\
	&\le C\|f\|_{L^\infty(\R^d)}h^{t-s}\|u\|_{\dot H^t(\R^d)}.
	\end{align*}
	For $t-s$ small enough (depending on $s$ and $\Omega$), Lemma \ref{l:higher_reg} implies that we have $H^{t}$-regularity-estimates on $\Omega$ and hence in particular $\|u\|_{\dot H^t(\R^d)}<\infty$. Thus $J_2\to 0$ as $h\to0$.
	
		\textbf{Step 3.} \emph{Estimate of $J_1$, $J_3$, $J_4$, $J_5$.} For $J_1$, using again the discrete Poincaré inequality, we estimate
	\begin{align*}
	J_1&=(\Theta_h*f,u_h-\tilde u_h)_{L^2_h(h\Z^d)}\\
	&\le\|\Theta_h*f\|_{L^2_h(h\Z)^d}\|u_h-\tilde u_h\|_{L^2_h(h\Z^d)}\\
	&\le C\|f\|_{L^\infty(\R^d)}\|u_h-\tilde u_h\|_{\dot H^s_h(\Omega_h)}\\
	&\le C\|f\|_{L^\infty(\R^d)}\|\Theta_h*f-\tilde\Theta_h*f\|_{\dot H^{-s}_h(\Omega_h)}\\
	&\le C\|f\|_{L^\infty(\R^d)}\|\Theta_h*f-\tilde\Theta_h*f\|_{L^2_h(\Omega_h)}\\
	&\le Ch\|f\|_{L^\infty(\R^d)}\|\nabla f\|_{L^\infty(\R^d)},
	\end{align*}
	where the right-hand side tends to $0$ as $h\to0$. The same argument also applies to $J_3$.
	
	For $J_5$, it suffices to observe that $\tilde\theta_h*u$ tends to $u$ in $L^2(\R^d)$. Finally, for $J_4$, we use the fact that $\tilde\theta_h*u$ is continuous (and thus $f\cdot(\tilde\theta_h*u)$ is continuous), and so
	\[\lim_{h\to0}(f,\tilde\theta_h*u)_{L^2_h(h\Z^d)}=(f,\tilde\theta_h*u)_{L^2(\R^d)}\]
	as a Riemann sum.

\end{proof}

\subsection{Scaling limit in Besov, Sobolev and H\"older spaces}
\label{ssec:proof-sh}

We now turn to the proof of the scaling limit in Besov spaces (which then implies the result in Sobolev and H\"older spaces as well). As we have already established convergence of the fields in the space of distributions, the main challenge is to prove tightness in Besov spaces. To this end, we use a very convenient criterion from \cite{FM17}. As in our case we do not need to worry about boundary issues, we do not need the full generality of that criterion. Let us state the version that we will use.

\begin{lemma}[Tightness criterion]\label{l:criterion_tightness}
Let $r\in\N$ and let $\hat\Omega\subset\R^d$ be an open bounded set. Then there exist functions $f,(g_j)_{j=1}^{2^d-1}\in C_c^r(\R^d)$ such that, for any multi-index $\underline{m}\in\N^d$ with $|m|<r$ and any $j\in\{1,\ldots,2^d-1\}$, we have 
\begin{equation}\label{e:moments_g}
\int_{\R^d}x^{\underline{m}}g_j(x)\dd x=0
\end{equation}
 and such that the following statement holds. Let $(\phi_n)_{n\in\N}$ be a family of random linear forms on $C^r_c(\R^d)$ with support in $\hat\Omega$. Let $t,t'\in\R$ with $t<t'$, $|t|,|t'|<r$ and let $p\in[1,\infty)$, $q\in[1,\infty]$. Let us suppose that there exists a constant $C$ such that
\begin{equation}\label{e:criterion_tightness1}
\sup_{n\in\N}\sup_{x\in\R^d}\left(\E\left| \langle \phi_n,f(\cdot-x)\rangle \right|^p\right)^{1/p}<\infty
\end{equation}
and
\begin{equation}\label{e:criterion_tightness2}
\sup_{n\in\N}\sup_{x\in\R^d}\max_{1\le j\le 2^d-1}\left(\E\left| \langle \phi_n,g_j(2^a(\cdot-x))\rangle \right|^p\right)^{1/p}\le \frac{C}{2^{a(d+t')}}\quad\text{for all } a\in\N.
\end{equation}
Then the family $(\phi_n)_{n\in\N}$ is tight in $\hat B^t_{p,\,q}(\R^d)$. If $t<t'-\frac{d}{p}$, it is also tight in $\hat B^t_{\infty,\,q}(\R^d)$.
\end{lemma}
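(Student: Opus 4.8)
The plan is to build $f$ and $g_1,\dots,g_{2^d-1}$ as the scaling function and the $2^d-1$ mother wavelets of a compactly supported, $C^r$-smooth multiresolution analysis of $L^2(\R^d)$, and then to run a Markov-plus-compactness argument based on the wavelet characterization of Besov norms. Concretely, take a one-dimensional Daubechies wavelet with $N$ vanishing moments, with $N$ large enough that it and its scaling function lie in $C^r$, and tensorize to get $f(x)=\prod_{i=1}^d\phi(x_i)$ and $g_j(x)=\prod_{i=1}^d\eta^{(j_i)}(x_i)$ for $j\in\{0,1\}^d\setminus\{0\}$, with $\eta^{(0)}=\phi$, $\eta^{(1)}=\psi$. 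Each $g_j$ has at least one $\psi$-factor, so the product structure gives $\int_{\R^d}x^{\underline m}g_j(x)\dd x=\prod_i\int_\R x_i^{m_i}\eta^{(j_i)}(x_i)\dd x_i=0$ whenever $|\underline m|<r\le N$, which is \eqref{e:moments_g}, while $f,g_j\in C_c^r(\R^d)$ by construction. The external input I would quote (e.g.\ from the wavelet characterizations in \cite[Chapter 3]{T78}) is that, for $|t|<r$ and $p,q\in[1,\infty]$,
\[\|\phi\|_{B^t_{p,q}}\asymp\big\|(\langle\phi,f(\cdot-k)\rangle)_k\big\|_{\ell^p}+\Big\|\big(2^{a(t+\frac d2-\frac dp)}\,\|(\langle\phi,2^{ad/2}g_j(2^a\cdot-k)\rangle)_{k,j}\|_{\ell^p}\big)_{a\ge0}\Big\|_{\ell^q}.\]

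The second ingredient is a Rellich-type fact: for $t<t''$ with $|t|,|t''|<r$ and any fixed bounded $D$, the set of distributions supported in $\overline D$ with $\|\cdot\|_{B^{t''}_{p,q}}\le R$ is a compact subset of $\hat B^t_{p,q}(\R^d)$. I would prove this from the wavelet characterization: the support constraint and the compact support of the wavelets force all but $O(2^{ad})$ of the scale-$a$ coefficients to vanish, so a diagonal extraction gives a subsequence along which every wavelet coefficient converges; the uniform $B^{t''}$-bound controls the $\ell^q$-tail in the scale variable $a$ because $\sum_a 2^{a(t-t'')q}<\infty$, and combining finitely-many-coefficients-per-scale convergence with this tail bound yields convergence in $\hat B^t_{p,q}$. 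When $t<t''-\frac dp$, composing with the elementary Besov embedding $B^{t''}_{p,q}\hookrightarrow B^{t''-d/p}_{\infty,q}$ gives in the same way compactness in $\hat B^t_{\infty,q}$.

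With these in hand the argument is short. Fix $t''\in(t,t')$, and additionally $t''>t+\frac dp$ in the $\hat B^t_{\infty,q}$ case (possible precisely because $t<t'-\frac dp$). Using the wavelet norm and $\ell^1\hookrightarrow\ell^q$,
\[\|\phi\|_{B^{t''}_{p,q}}\lesssim\big\|(\langle\phi,f(\cdot-k)\rangle)_k\big\|_{\ell^p}+\sum_{a\ge0}2^{a(t''+\frac d2-\frac dp)}\,\big\|(\langle\phi,2^{ad/2}g_j(2^a\cdot-k)\rangle)_{k,j}\big\|_{\ell^p}.\]
Taking expectations, applying Jensen's inequality, and using that $\phi_n$ is supported in $\hat\Omega$ (so at most $O(2^{ad})$ scale-$a$ coefficients are nonzero, each bounded through \eqref{e:criterion_tightness2} with $x=2^{-a}k$ by $C2^{-a(d+t')}$), one gets $\E\big\|(\langle\phi_n,2^{ad/2}g_j(2^a\cdot-k)\rangle)_{k,j}\big\|_{\ell^p}\le\big(2^{ad}(2^{ad/2}C2^{-a(d+t')})^p\big)^{1/p}=C\,2^{a(\frac dp-\frac d2-t')}$, so the $a$-series is bounded by $C\sum_a 2^{a(t''-t')}<\infty$; the $f$-term is controlled similarly by \eqref{e:criterion_tightness1}. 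Hence $\limsup_{n\to\infty}\E\|\phi_n\|_{B^{t''}_{p,q}}<\infty$. By Markov's inequality, for every $\varepsilon>0$ there is $R$ with $\PP(\|\phi_n\|_{B^{t''}_{p,q}}>R)<\varepsilon$ for $n$ large; by the Rellich fact the set $\{\|\cdot\|_{B^{t''}_{p,q}}\le R,\ \supp\subset\overline{\hat\Omega}\}$ is compact in $\hat B^t_{p,q}(\R^d)$ (resp.\ $\hat B^t_{\infty,q}$), which is tightness of $(\phi_n)_{n\ge n_0}$; the finitely many remaining $\phi_n$ are individually tight since these are separable Banach spaces, so the whole family is tight.

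I expect the main obstacle to be the functional-analytic bookkeeping of the first two steps rather than the probability: producing one system $f,g_j$ that is simultaneously $C_c^r$, has $r$ vanishing moments, and supports the Besov wavelet norm equivalence on the full range $|t|,|t'|<r$, and then lining up the exponents ($\frac d2$, $\frac dp$, and the negative gain $t''-t'$) in the compact-embedding estimate. The $\E\sup_a$-versus-$\sup_a\E$ nuisance that would otherwise appear is sidestepped by working in the intermediate space $B^{t''}_{p,q}$ and using $\ell^1\hookrightarrow\ell^q$.
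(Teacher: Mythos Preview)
Your proposal is correct and, in fact, gives considerably more detail than the paper does. The paper's own ``proof'' of this lemma is a pure citation: it invokes \cite[Theorem~2.30]{FM17} (Furlan--Mourrat), observes that the global-vs-local and $\sup$-vs-$\limsup$ discrepancies are harmless, and notes that the vanishing-moment property \eqref{e:moments_g} is \cite[Equation~(2.2)]{FM17}. Your argument --- Daubechies wavelets for $f,g_j$, wavelet characterization of Besov norms, $\ell^1\hookrightarrow\ell^q$ to pull the expectation inside the sum over scales, Markov's inequality, and compact embedding $B^{t''}_{p,q}\hookrightarrow\hookrightarrow \hat B^t_{p,q}$ on compactly supported distributions --- is precisely the machinery underlying the cited result, so there is no substantive divergence in approach; you have simply unpacked the black box.

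Two small corrections. First, the reference \cite[Chapter~3]{T78} is anachronistic: Triebel's 1978 book predates the wavelet characterization of Besov spaces; you want Meyer's \emph{Ondelettes} or one of Triebel's later volumes. Second, your remark that ``the finitely many remaining $\phi_n$ are individually tight since these are separable Banach spaces'' needs the observation that $\hat B^t_{p,q}$ (being the closure of $C_c^\infty$) is indeed separable even when $p$ or $q$ equals $\infty$, and that your bound $\E\|\phi_n\|_{B^{t''}_{p,q}}<\infty$ forces $\phi_n$ to lie almost surely in $\hat B^t_{p,q}$ (not just $B^t_{p,q}$); both are true, but worth stating explicitly.
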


We note that the assumptions are independent of the parameter $q$, only the integrability $p$ and regularity $t$ are important. Also,  \eqref{e:criterion_tightness2} is required to hold for all $a\in\N$, not just for $a=0$ (as would correspond to \eqref{e:criterion_tightness1}). Here the additional assumption \eqref{e:moments_g} on the $g_j$ will come in.

\begin{proof}
This is essentially \cite[Theorem 2.30]{FM17}. There a local version of the theorem is given. The global version presented here is obtained by choosing $U=\R^d$, $\hat\Omega\subset K_1\subset K_2\subset\ldots$ such that already $K_1$ is far larger than $\hat\Omega$, $k_1=k_2=\ldots =0$ and observing that, for functions with uniformly compact support, the local and global Besov spaces agree.
%We also used $\limsup_{n\to\infty}$ instead of $\sup_{n\in\N}$ in \eqref{e:criterion_tightness1} and \eqref{e:criterion_tightness2}, but this clearly does not make a difference. 
Finally, the assertion that $\int_{\R^d}x^{\underline{m}}g_j(x)\dd x=0$ is stated in \cite[Equation (2.2)]{FM17}.
\end{proof}

\begin{proof}[Proof of Theorem \ref{t:scalinglimit_sob_hold}]

\textbf{Step 1.} \emph{Simplifications.} It suffices to prove tightness of $I_h\varphi_h$ in the corresponding spaces; the convergence then follows easily from Theorem \ref{t:scalinglimit_distributions} by the same argument as in \cite[Proof of Theorem 3.11]{CDH19}. In order to prove tightness, we will apply Lemma \ref{l:criterion_tightness}. We fix some open bounded set $\hat\Omega\Supset\Omega$ and note that for $h$ small enough $I_h\varphi_h$ is supported in $\hat\Omega$. Let us fix some $r\in\N$ with $r>\left|s-\frac d2\right|$ and let $f,(g_j)$ be as in the lemma. We claim that, for any $p'<\infty$, 
\begin{align}
\sup_{0<h\le 1}\sup_{x\in\R^d}\E_h\left| (I_h\varphi_h,f(\cdot-x))_{L^{p'}(\R^d)} \right|^{p'}&< \infty,\label{e:tightness_besov1}\\
\sup_{0<h\le 1}\sup_{x\in\R^d}\max_{1\le j\le 2^d-1}\E_h\left| (I_h\varphi_h,g_j(2^a(\cdot-x)))_{L^{p'}(\R^d)} \right|^{p'}&\le \frac{C}{2^{ap'(d/2+s)}}\quad\text{for all } a\in\N.\label{e:tightness_besov2}
\end{align}
Once we have verified this, Lemma \ref{l:criterion_tightness} (with $t'=s-\frac{d}{2}$) directly implies tightness in $\hat B^{s'}_{p,\,q}(\R^d)$ for any $p\in[1,\infty)$, $q\in[1,\infty]$, and, choosing $p'$ sufficiently large such that $s'<t'-\frac{d}{p'}$, we cover the case $p=\infty$ as well. Once we know tightness in Besov spaces, the tightness in Sobolev- and Hölder spaces follows directly from Besov embedding.

Regarding \eqref{e:tightness_besov1} and \eqref{e:tightness_besov2}, we can make some immediate simplifications. First of all, it suffices to check the two estimates for $p'\in2\N$ (the result for other $p'$ then follows from Jensen's inequality). In addition, as $\varphi_h$ is a Gaussian random variable, all even moments of linear functionals of $\varphi_h$ are controlled by its second moment. This means that we only need to consider $p'=2$.
That is, we actually only need to verify that
\begin{align}
\sup_{0<h\le 1}\sup_{x\in\R^d}\E_h\left| (I_h\varphi_h,f(\cdot-x))_{L^2(\R^d)} \right|^2&<\infty,\label{e:tightness_besov3}\\
\sup_{0<h\le 1}\sup_{x\in\R^d}\max_{1\le j\le 2^d-1}\E_h\left| (I_h\varphi_h,g_j(2^a(\cdot-x)))_{L^2(\R^d)} \right|^2&\le \frac{C}{2^{2a(d+t')}} =\frac{C}{2^{a(d+2s)}}\quad\text{for all } a\in\N.\label{e:tightness_besov4}
\end{align}
The estimate \eqref{e:tightness_besov4} is the crucial one. So we give its proof in detail, and then explain how to prove \eqref{e:tightness_besov3} as well.

\textbf{Step 2.} \emph{Proof of \eqref{e:tightness_besov4}.}  Let us fix some $h\le1$, $a\in\N$, $x\in\R^d$, and abbreviate $\tilde g_j^{(a)}(y)\coloneqq g_j(-2^ay)$. A computation similar to the one in \eqref{e:scalinglimit_distributions2} shows that
\begin{equation}\label{e:tightness_besov5}
\begin{split}
\E_h\left| (I_h\varphi_h,g_j(2^a(\cdot-x)))_{L^2(\R^d)} \right|^2&=\E_h\left|\int_{y\in \R^d}\sum_{z\in h\Z^d}h^d\varphi_h(z)\Theta_h(y-z)g_j(2^a(y-x))\dd y\right|^2\\
&=\E_h\left(\sum_{z\in h\Z^d}h^d\varphi_h(z)\int_{y\in \R^d}\Theta_h(y-z)\tilde g^{(a)}_j(x-y)\dd y\right)^2\\
&=\E_h\left(\varphi_h,(\Theta_h*\tilde g_j^{(a)})(x-\cdot)\right)_{L^2_h(\Omega_h)}^2\\
&=\|(\Theta_h*\tilde g_j^{(a)})(x-\cdot)\|_{\dot H^{-s}_h(\Omega_h)}^2\\
&\le\|(\Theta_h*\tilde g_j^{(a)})(x-\cdot)\|_{\dot H^{-s}_h(h\Z^d)}^2.
\end{split}
\end{equation}
We estimate the right-hand side of \eqref{e:tightness_besov5} by arguing in Fourier space (similarly as in the proof of Theorem \ref{t:error_estimate}). Namely, using Lemma \ref{l:disc_cont_FT} and the fact that the Fourier transform of a convolution is the product of the Fourier transforms, we compute
\begin{align*}
&\E_h\left| (I_h\varphi_h,g_j(2^a(\cdot-x)))_{L^2(\R^d)} \right|^2\\
&\quad\le\int_{\left(-\frac{\pi}{h},\frac{\pi}{h}\right)^d}M_h(\xi)^{-2s}|\F_h[(\Theta_h*\tilde g_j^{(a)})(x-\cdot)](\xi)|^2\dd\xi\\
&\quad=\int_{\left(-\frac{\pi}{h},\frac{\pi}{h}\right)^d}M_h(\xi)^{-2s}\left|\sum_{\zeta\in\frac{2\pi}{h}\Z^d}\F[(\Theta_h*\tilde g_j^{(a)})(x-\cdot)](\xi+\zeta)\right|^2\dd\xi\\
&\quad=\int_{\left(-\frac{\pi}{h},\frac{\pi}{h}\right)^d}M_h(\xi)^{-2s}\left|\sum_{\zeta\in\frac{2\pi}{h}\Z^d}\F[(\Theta_h(x-\cdot)](\xi+\zeta)\F[\tilde g_j^{(a)}(x-\cdot)](\xi+\zeta)\right|^2\dd\xi.
\end{align*}
Next, we fix some $t$ with $\frac d2<t<k-s$ and use Cauchy--Schwarz' inequality (as in the proof of Theorem \ref{t:error_estimate}) to rewrite this as
\begin{equation}\label{e:tightness_besov6}
\begin{split}
&\E_h\left| (I_h\varphi_h,g_j(2^a(\cdot-x)))_{L^2(\R^d)} \right|^2\\
&\quad\le\int_{\left(-\frac{\pi}{h},\frac{\pi}{h}\right)^d}M_h(\xi)^{-2s}\left(\sum_{\zeta\in\frac{2\pi}{h}\Z^d}\left(\frac1h+|\xi+\zeta|\right)^{2t}|\F[\Theta_h(x-\cdot)](\xi+\zeta)|^2|\F[\tilde g_j^{(a)}(x-\cdot)](\xi+\zeta)|^2\right)\\
&\qquad\qquad\times\left(\sum_{\zeta\in\frac{2\pi}{h}\Z^d}\frac{1}{\left(\frac1h+|\xi+\zeta|\right)^{2t}}\right)\dd\xi.
\end{split}
\end{equation}
Observing that
\[\sup_{\xi\in\left(-\frac{\pi}{h},\frac{\pi}{h}\right)^d}\sum_{\zeta\in\frac{2\pi}{h}\Z^d}\frac{1}{\left(\frac1h+|\xi+\zeta|\right)^{2t}}\le C\sum_{\zeta\in\frac{2\pi}{h}\Z^d}\frac{h^{2t}}{\left(1+h|\zeta|\right)^{2t}}\le Ch^{2t}\]
as well as the fact that $M_h(\xi)$ is $\frac{2\pi}{h}$-periodic, we can rewrite \eqref{e:tightness_besov6} as
\begin{equation}\label{e:tightness_besov7}
\begin{split}
&\E_h\left| (I_h\varphi_h,g_j(2^a(\cdot-x)))_{L^2(\R^d)} \right|^2\\
&\le Ch^{2t}\int_{\left(-\frac{\pi}{h},\frac{\pi}{h}\right)^d}\sum_{\zeta\in\frac{2\pi}{h}\Z^d}M_h(\xi+\zeta)^{-2s}\left(\frac1h+|\xi+\zeta|\right)^{2t}|\F[\Theta_h(x-\cdot)](\xi+\zeta)|^2|\F[\tilde g_j^{(a)}(x-\cdot)](\xi+\zeta)|^2\dd\xi\\
&=Ch^{2t}\int_{\R^d}M_h(\xi)^{-2s}\left(\frac1h+|\xi|\right)^{2t}|\F[\Theta_h(x-\cdot)](\xi)|^2|\F[\tilde g_j^{(a)}(x-\cdot)](\xi)|^2\dd\xi.
\end{split}
\end{equation}
After all these manipulations, we have rewritten the term to be estimated as an integral involving the absolute values of the Fourier transforms of $\Theta_h$, $\tilde g_j^{(a)}$. To complete the proof, we use our assumptions on $\Theta_h$, $\tilde g_j^{(a)}$ to bound these Fourier transforms.

Regarding $\Theta_h$, we know that $\F[\Theta_h](\xi)=\F[\Theta](h\xi)$; so assumption \eqref{e:assumption_Theta} implies that $|\F[\Theta_h](\xi)|\le C\frac{(\sum_{j=1}^d\sin^2(h\xi_j))^{k/2}}{h^k|\xi|^k}$ and 
\begin{equation}\label{e:tightness_besov9}
|\F[\Theta_h(x-\cdot)](\xi)|\le C\frac{(\sum_{j=1}^d\sin^2(h\xi_j))^{k/2}}{h^k|\xi|^k}.
\end{equation}
Regarding $\tilde g_j^{(a)}$, we first note that $\F[\tilde g_j^{(a)}](\xi)=\F[g_j(-2^a\cdot)](\xi)=\frac{1}{2^{ad}}\F[g_j]\left(-\frac{\xi}{2^a}\right)$.
As $g_j\in C^r_c(\R^d)$, we know that $\F[g_j]$ is smooth and decays at least like $\frac{1}{|\xi|^r}$ as $\xi\to\infty$. On the other hand, the moments of $g_j$ up to order $r-1$ vanish by \eqref{e:moments_g} and so $\nabla^m\F[g_j](0)=0$ for any $m\le r-1$. By Taylor's theorem, this implies $|\F[g_j](\xi)|\le C|\xi|^r$. Altogether, we conclude that $|\F[g_j](\xi)|\le C\frac{|\xi|^r}{(1+|\xi|)^{2r}}$ and thus also
\[|\F[\tilde g_j^{(a)}](\xi)|\le C\frac{1}{2^{ad}}\frac{\left|\frac{\xi}{2^a}\right|^r}{\left(1+\left|\frac{\xi}{2^a}\right|\right)^{2r}}=\frac{C|\xi|^r}{2^{a(d+r)}(1+2^{-a}|\xi|)^{2r}}\]
and
\begin{equation}\label{e:tightness_besov10}
|\F[\tilde g_j^{(a)}(x-\cdot)](\xi)|\le \frac{C|\xi|^r}{2^{a(d+r)}(1+2^{-a}|\xi|)^{2r}}.
\end{equation}

Returning to \eqref{e:tightness_besov7}, we obtain that
\begin{equation}\label{e:tightness_besov8}
\begin{split}
&\E_h\left| (I_h\varphi_h,g_j(2^a(\cdot-x)))_{L^2(\R^d)} \right|^2\\
&\quad\le Ch^{2t}\int_{\R^d}\frac{h^{2s}}{\sum_{j=1}^d(\sin^2(h\xi_j))^s}\frac{(1+h|\xi|)^{2t}}{h^{2t}}\left|\frac{(\sum_{j=1}^d\sin^2(h\xi_j))^{k/2}}{h^k|\xi|^k}\right|^2\left|\frac{|\xi|^r}{2^{a(d+r)}(1+2^{-a}|\xi|)^{2r}}\right|^2\dd\xi\\
&\quad\le C\frac{h^{2(s-k)}}{2^{2a(d+r)}}\int_{\R^d}\frac{(1+h|\xi|)^{2t}(\sum_{j=1}^d\sin^2(h\xi_j))^{k-s}}{|\xi|^{2(k-r)}(1+2^{-a}|\xi|)^{4r}}\dd\xi.
\end{split}
\end{equation}
In particular, the integrand decays like $\frac{1}{|\xi|^{2(k+r-t)}}$ and so our assumptions $t<k-s$ and $r>\left|s-\frac d2\right|\ge\frac{d}{2}-s$ ensure its integrability at $\xi = \infty$. At $\xi = 0$, the integral behaves like $\frac{1}{|\xi|^{2(s-r)}}$, which  is integrable since $r>\left|s-\frac d2\right|\ge s-\frac d2$.

We distinguish the two cases whether $h<2^{-a}$ or $h\ge2^{-a}$. In the former case, we can bound the integral on the right-hand side of \eqref{e:tightness_besov8} as
\begin{align*}
&\int_{\R^d}\frac{(1+h|\xi|)^{2t}(\sum_{j=1}^d\sin^2(h\xi_j))^{k-s}}{|\xi|^{2(k-r)}(1+2^{-a}|\xi|)^{4r}}\dd\xi\\
&\quad\le C\int_{|\xi|\le 2^a}\frac{1\cdot(h|\xi|)^{2(k-s)}}{|\xi|^{2(k-r)}\cdot 1}\dd\xi+C\int_{2^a<|\xi|\le 1/h}\frac{1\cdot(h|\xi|)^{2(k-s)}}{|\xi|^{2(k-r)}\cdot(2^{-a}|\xi|)^{4r}}+C\int_{|\xi|> 1/h}\frac{(h|\xi|)^{2t}\cdot 1}{|\xi|^{2(k-r)}(2^{-a}|\xi|)^{4r}}\dd\xi\\
&\quad\le C2^{ad}h^{2(k-s)}2^{2a(r-s)}+C\frac{1}{h^d}2^{4ar}h^{2(k-s)}h^{2(r+s)}+C\frac{1}{h^d}2^{4ar}h^{2t}h^{2(k+r-t)}\\
&\quad\le C2^{a(d+2r-2s)}h^{2(k-s)}\left(1+2^{a(2r+2s-d)}h^{2r+2s-d}+2^{a(2r+2s-d)}h^{2r+2s-d}\right)\\
&\quad\le C2^{a(d+2r-2s)}h^{2(k-s)},
\end{align*}
where in the last step we used that $2^ah<1$ and $2r+2s-d>0$. 
In case $h\ge2^{-a}$, we can similarly estimate the integral by
\begin{align*}
&\int_{\R^d}\frac{(1+h|\xi|)^{2t}(\sum_{j=1}^d\sin^2(h\xi_j))^{k-s}}{|\xi|^{2(k-r)}(1+2^{-a}|\xi|)^{4r}}\dd\xi\\
&\quad\le C\int_{|\xi|\le 1/h}\frac{1\cdot(h|\xi|)^{2(k-s)}}{|\xi|^{2(k-r)}\cdot 1}\dd\xi+C\int_{1/h<|\xi|\le 2^a}\frac{(h|\xi|)^{2t}\cdot 1}{|\xi|^{2(k-r)}\cdot1}\dd\xi+C\int_{|\xi|> 2^a}\frac{(h|\xi|)^{2t}\cdot 1}{|\xi|^{2(k-r)}(2^{-a}|\xi|)^{4r}}\dd\xi\\
&\quad\le C\frac{1}{h^d} h^{2(k-s)}h^{2(s-r)}+C2^{ad}h^{2t}2^{2a(-k+r+t)}+C2^{ad}2^{4ar}h^{2t}2^{2a(-k-r+t)}\\
&\quad\le C2^{a(d+2r-2s)}h^{2(k-s)}\left(2^{a(-d-2r+2s)}h^{-d-2r+2s}+2^{2a(-k+s+t)}h^{2(-k+s+t}+2^{2a(-k+s+t)}h^{2(-k+s+t)}\right)\\
&\quad\le C2^{a(d+2r-2s)}h^{2(k-s)}.
\end{align*}
Thus, in any case, the integral on the right-hand side of \eqref{e:tightness_besov8} is bounded by $C2^{a(d+2r-2s)}h^{2(k-s)}$. Using this, \eqref{e:tightness_besov8} implies  \eqref{e:tightness_besov4}.

\textbf{Step 3.} \emph{Proof of \eqref{e:tightness_besov3}.}
The proof of \eqref{e:tightness_besov3} is similar. One difference is that we no longer need to prove decay of the term in question, only boundedness. On the other hand, the function $f$ does not satisfy a moment bound like \eqref{e:moments_g} and so we have less control over the behavior of $\F[f]$ near 0. To deal with the latter problem, we will use the Poincar\'e inequality on a suitable bounded domain $\tilde{\tilde\Omega}_h$ right in the beginning of the argument to replace the term $\frac{1}{M_h(\xi)^{2s}}$ with $\frac{1}{(1+M_h(\xi)^2)^s}$ and thereby make sure there is no singularity at $0$.

In more detail, let us fix some bounded domain $\tilde\Omega\Supset\hat\Omega$ such that $\supp(f(x-\cdot))\subset\tilde\Omega$ whenever $x\in\hat\Omega$. Then \eqref{e:tightness_besov3}  vanishes for $x\not\in\tilde\Omega$, and we can restrict attention to $x\in\tilde\Omega$. Let us fix a bounded domain $\tilde{\tilde\Omega}$ such that $\supp(\Theta_h*f(x-\cdot))\subset\tilde{\tilde\Omega}$ when $x\in\tilde\Omega$ and $h\le1$, and let $\tilde{\tilde\Omega}_h=\tilde{\tilde\Omega}\cap h\Z^d$.% Fix a cut-off function $\eta\in C_c^\infty(\Omega^*)$ that is equal to 1 in $\tilde{\tilde\Omega}$, and let $\eta_h$ be its restriction to $h\Z^d$. %and another bounded domain $\Omega^*\Supset\tilde{\tilde\Omega}$, 

We also abbreviate  $\tilde f(y)=f(-y)$. Arguing as for \eqref{e:tightness_besov4}, we find that, for $x\in\tilde\Omega$ and $h\le 1$, 
\[\E_h\left| (I_h\varphi_h,f(\cdot-x))_{L^2(\R^d)} \right|^2\le\|(\Theta_h*\tilde f)(x-\cdot)\|_{\dot H^{-s}_h(\Omega_h)}^2\le\|(\Theta_h*\tilde f)(x-\cdot)\|_{\dot H^{-s}_h(\tilde{\tilde\Omega}_h)}^2.\]
Since $\supp(\Theta_h*\tilde f(x-\cdot))\subset\tilde{\tilde\Omega}$, we use the Poincar\'e inequality (Lemma \ref{l:poincare}) to deduce that the $\|\cdot\|_{\dot H^{-s}_h(\tilde{\tilde\Omega}_h)}$-norm is bounded by a multiple of the (inhomogenous) $\|\cdot\|_{H^{-s}_h(h\Z^d)}$-norm. Indeed, if we fix another bounded domain $\Omega^*\Supset\tilde{\tilde\Omega}$ and a cut-off function $\eta\in C_c^\infty(\Omega^*)$ that is equal to 1 in $\tilde{\tilde\Omega}$ and let $\eta_h$ be its restriction to $h\Z^d$, then for any $g_h\colon h\Z^d\to\R$ that is supported in $\tilde{\tilde\Omega}_h$ we have
\[
\|g_h\|_{\dot H^{-s}_h(\tilde{\tilde\Omega}_h)}\le \|\eta_h g_h\|_{\dot H^{-s}_h(\Omega^*_h)}\le \sup_{j_h\in \dot H^s_h(\Omega^*_h)}\frac{(g_h,\eta_hj_h)_{L^2_h(h\Z^d)}}{\|j_h\|_{\dot H^s_h(\Omega^*_h)}}\le \|g_h\|_{H^{-s}_h(h\Z^d)}\sup_{j_h\in \dot H^s_h(\Omega^*_h)}\frac{\|\eta_hj_h\|_{{H^s_h(h\Z^d)}}}{\|j_h\|_{\dot H^s_h(\Omega^*_h)}}
\]
and the second factor is bounded uniformly in $h$, as Young's convolution inequality (in the form $\int |a*b|^2\le(\int |a|^2)(\int |b|)^2$) and the estimate $(1+M_h(\xi)^2)^s\le C\left((1+M_h(\xi-\zeta)^2)^s+(1+M_h(\zeta)^2)^s\right)$ for $\xi,\zeta\in\left(-\frac{\pi}{h},\frac{\pi}{h}\right)^d$  imply that
\begin{align*}
\|\eta_hj_h\|_{H^s_h(h\Z^d)}^2&=\int_{\left(-\frac{\pi}{h},\frac{\pi}{h}\right)^d}(1+M_h(\xi)^2)^s\left|\int_{\left(-\frac{\pi}{h},\frac{\pi}{h}\right)^d}\F_h[\eta_h](\zeta)\F_h[j_h](\xi-\zeta)\dd\zeta\right|^2\dd\xi\\
&\le C\int_{\left(-\frac{\pi}{h},\frac{\pi}{h}\right)^d}\left|\int_{\left(-\frac{\pi}{h},\frac{\pi}{h}\right)^d}(1+M_h(\zeta)^2)^{s/2}\F_h[\eta_h](\zeta)\F_h[j_h](\xi-\zeta)\dd\zeta\right|^2\dd\xi\\
&\qquad+C\int_{\left(-\frac{\pi}{h},\frac{\pi}{h}\right)^d}\left|\int_{\left(-\frac{\pi}{h},\frac{\pi}{h}\right)^d}(1+M_h(\xi-\zeta)^2)^{s/2}\F_h[\eta_h](\zeta)\F_h[j_h](\xi-\zeta)\dd\zeta\right|^2\dd\xi\\
&\le C\left(\int_{\left(-\frac{\pi}{h},\frac{\pi}{h}\right)^d}(1+M_h(\xi)^2)^{s/2}|\F_h[\eta_h](\xi)|^2\dd\xi\right)^2\left(\int_{\left(-\frac{\pi}{h},\frac{\pi}{h}\right)^d}|\F_h[j_h](\xi)|^2\dd\xi\right)
\\
&\qquad+C\left(\int_{\left(-\frac{\pi}{h},\frac{\pi}{h}\right)^d}|\F_h[\eta_h](\xi)|\dd\xi\right)^2\left(\int_{\left(-\frac{\pi}{h},\frac{\pi}{h}\right)^d}(1+M_h(\xi)^2)^s|\F_h[j_h](\xi)|^2\dd\xi\right)
\end{align*}
which, by the Poincar\'e inequality on $\Omega_h^*$ and the fact that $\F_h[\eta_h]$ decays faster than any polynomial (uniformly in $h$), is bounded by $C\|j_h\|_{\dot H^s_h(\Omega^*_h)}^2$.

So in fact we have
\[\|(\Theta_h*\tilde f)(x-\cdot)\|_{\dot H^{-s}_h(\tilde{\tilde\Omega}_h)}\le C\|(\Theta_h*\tilde f)(x-\cdot)\|_{H^{-s}_h(h\Z^d)}\]
Using this, we can now continue with a calculation similar as the one for \eqref{e:tightness_besov5} to obtain
\begin{align*}
&\E_h\left| (I_h\varphi_h,f(\cdot-x))_{L^2(\R^d)}\right|^2\\
%&\quad\le\|(\Theta_h*\tilde f)(x-\cdot)\|_{H^{-s}_h(\tilde{\tilde\Omega}_h)}^2\\
&\quad\le\|(\Theta_h*\tilde f)(x-\cdot)\|_{H^{-s}_h(h\Z^d)}^2\\
&\quad=\int_{\left(-\frac{\pi}{h},\frac{\pi}{h}\right)^d}(1+M_h(\xi)^2)^{-s}\left|\sum_{\zeta\in\frac{2\pi}{h}\Z^d}\F[(\Theta_h(x-\cdot)](\xi+\zeta)\F[\tilde f(x-\cdot)](\xi+\zeta)\right|^2\dd\xi\\
&\quad\le Ch^{2t}\int_{\R^d}(1+M_h(\xi)^2)^{-s}\left(\frac1h+|\xi|\right)^{2t}|\F[(\Theta_h(x-\cdot)](\xi)|^2|\F[\tilde f(x-\cdot)](\xi)|^2\dd\xi.
\end{align*}
Using the bound \eqref{e:tightness_besov9} for $\F[\Theta_h]$ as well as the estimate 
\[|\F[\tilde f(x-\cdot)](\xi)|\le \frac{C}{(1+|\xi|)^r}\]
(the analogue of \eqref{e:tightness_besov10}), we obtain that
\[\E_h\left| (I_h\varphi_h,f(\cdot-x))_{L^2(\R^d)} \right|^2\le Ch^{2(s-k)}\int_{\R^d}\frac{(1+h|\xi|)^{2t}(\sum_{j=1}^d\sin^2(h\xi_j))^k}{(h^2+(\sum_{j=1}^d\sin^2(h\xi_j))^2)^s|\xi|^{2k}(1+|\xi|)^{2r}}\dd\xi\]
and (splitting the integral into three integrals over $|\xi|\le 1$, $1<|\xi|\le1/h$, and $1/h<|\xi|$) we see as before that the right-hand side is indeed bounded by a constant.

\end{proof}

Finally, let us give the argument for convergence of the maximum of the subcritical discrete FGF. Some technicalities arise because Theorem \ref{t:scalinglimit_sob_hold} applies to $I_h\varphi_h$ while we are interested in $\varphi_h$ itself. So we need to argue that the regularity of $I_h\varphi_h$ implies that $\varphi_h$ is necessarily close to $I_h\varphi_h$.

\begin{proof}[Proof of Corollary \ref{c:convmaxsubcrit}]
\textbf{Step 1.} \emph{Consequences of Theorem \ref{t:scalinglimit_sob_hold}.} Let $k \in \N$, with $k>s+\frac{d}{2}$, be arbitrary and take $\Theta$ to be a product of one-dimensional B-splines of order $k$ as in \cite[Section 1.9.4]{JS14}, i.e.,
\[\F[\Theta](\xi)=\prod_{j=1}^d\left(\frac{\sin(\xi)}{\xi}\right)^k.\]
This $\Theta$ is a compactly supported non-negative mollifier that satisfies the assumptions of Theorem \ref{t:scalinglimit_sob_hold}. 
Moreover, let us fix some $\alpha $ with $0<\alpha<\min\left(s-\frac{d}{2},1\right)$. 
Theorem \ref{t:scalinglimit_sob_hold} implies that $I_h\varphi_h$ converges to $\varphi$ in law with respect to the topology of $C^{0,\,\alpha}(\R^d)$. This directly implies that the maximum of $I_h\varphi_h$ converges in distribution to the maximum of $\varphi$. Therefore, it suffices to prove that the maximum of $\varphi_h$ is close enough to the maximum of $I_h\varphi_h$ in the sense that
\begin{equation}\label{e:convmax1}
\max_{x\in\Omega_h}\varphi_h(x)-\max_{y\in\R^d}I_h\varphi_h(y)\to0
\end{equation}
in probability as $h\to0$.

\textbf{Step 2.} \emph{Regularity of $\varphi_h$.} In order to prove \eqref{e:convmax1}, we need to quantify the regularity of $\varphi_h$. The idea here is that if $\varphi_h$ oscillates a lot, then also $I_h\varphi_h$ oscillates a lot and hence has large $C^{0,\,\alpha}$-norm, which is unlikely. In making this rigorous, we use our choice of $\Theta$, which simplifies some calculations.

Given an arbitrary $f_h\colon\,h\Z^d\to\R$, we claim that
\begin{equation}\label{e:convmax2}
\max_{\substack{y,\,y'\in h\Z^d\\|y-y'|_\infty\le kh}}|f_h(y)-f_h(y')|\le Ch^\alpha\|I_hf_h\|_{C^{0,\,\alpha}(\R^d)}.
\end{equation}
To see \eqref{e:convmax2}, we note that both sides are invariant under scaling the lattice, and so if we prove it for some fixed $h_*>0$, then automatically \eqref{e:convmax2} holds for all $h>0$ with a constant $c$ that does not depend on $h$. So let us fix, say, $h_*=1$.

The function $\Theta_{h_*}$ has support precisely $\left(-\frac{{h_*}k}{2},\frac{{h_*}k}{2}\right)^d$ and is piecewise a polynomial of degree at most $k-1$ in each variable.

Let us take $x\in {h_*}\Z^d$. Then $I_{h_*}f_{h_*}\restriction_{x+(0,{h_*}/2)^d}$ is a polynomial of degree at most $k-1$ in each variable, which depends precisely on the values of $f_{h_*}$ in $x+\left(-\frac{{h_*}k}{2},\frac{{h_*}(k+1)}{2}\right)^d\cap h\Z^d$. 
The space of polynomials of degree at most $k-1$ in each variable is an $\R$-vector space of dimension exactly $k^d$. The same holds true for the space of functions from $x+\left(-\frac{{h_*}k}{2},\frac{{h_*}(k+1)}{2}\right)^d\cap {h_*}\Z^d$ to $\R$.
This means that $I_{h_*}f_{h_*}$ induces a linear map between two finite-dimensional vector spaces of the same dimension. This map has trivial kernel (as follows for example from \cite[Corollary 4.62]{Schu07} together with an induction on $d$) and so it is in fact an isomorphism.

As all norms on a finite-dimensional $\R$-vector space are equivalent, we conclude that
\[\max_{y\in x+\left(-{h_*}k/2,{h_*}(k+1)/2\right)\cap {h_*}\Z^d}|f_{h_*}(y)|\le C\sup_{z\in x+(0,{h_*}/2)^d}|I_{h_*}f_{h_*}(z)|,\]
where the constant $C$ is independent of $x$ by translation-invariance.
In fact, even more is true: if $f_{h_*}$ is equal to a constant $a$ everywhere, then $I_{h_*}f_{h_*}$ is equal to the same constant $a$ (as follows for example from \cite[Theorem 4.20]{Schu07}). This implies that we actually have
\[\max_{y\in x+\left(-{h_*}k/2,{h_*}(k+1)/2\right)\cap {h_*}\Z^d}|f_{h_*}(y)-a|\le C\sup_{z\in x+(0,{h_*}/2)^d}\left|I_{h_*}f_{h_*}(z)-a\right|\]
for any $a\in\R$. By choosing $a=I_{h_*}f_{h_*}(x)$, we obtain that
\begin{align*} 
&\max_{y,\,y'\in x+\left(-{h_*}k/2,{h_*}(k+1)/2\right)\cap {h_*}\Z^d}|f_{h_*}(y)-f_{h_*}(y')|\\
&\quad\le C\max_{y,\,y'\in x+\left(-{h_*}k/2,{h_*}(k+1)/2\right)\cap {h_*}\Z^d}|f_{h_*}(y)-a|+|f_{h_*}(y')-a|\\
&\quad\le C\max_{z\in x+(0,h_*/2)^d}\left|I_{h_*}f_{h_*}(z)-I_{h_*}f_{h_*}(x)\right|\\
&\quad\le Ch_*^\alpha\|I_{h_*}f_{h_*}\|_{C^{0,\,\alpha}(x+(0,{h_*}/2)^d)}.
\end{align*} 
This shows \eqref{e:convmax2}.

We know that $I_h\varphi_h$ converges in $C^{0,\,\alpha}(\R^d)$ and so it is, in particular, tight in that space. This means that, if we define 
\[\Ev_M=\left\{[I_h\varphi_h]_{C^{0,\,\alpha}(\R^d)}\le M\right\},\] then 
$\lim_{M\to\infty}\lim_{h\to0}\PP_h(\Ev_M)=1$.
On the other hand, \eqref{e:convmax2} implies that, on the event $\Ev_M$, we have
\begin{equation}\label{e:convmax3}\max_{\substack{y,\,y'\in h\Z^d\\|y-y'|y_\infty\le kh}}|\varphi_h(y)-\varphi_h(y')|\le CMh^\alpha.
\end{equation}
This is the desired regularity estimate for $\varphi_h$.

\textbf{Step 3.} \emph{Completion of the proof.} Our specific choice of $\Theta$ has the property that $\sum_{x\in h\Z^d}h^d\Theta_h(y-x)=1$ for any $y\in\R^d$. This means that $I_h\varphi_h(y)$ is a convex combination of the $\varphi_h(x)$ with $|x-y|_\infty
<\frac{hk}{2}$, and so we have \[\max_{x\in\Omega_h}\varphi_h(x)\ge\max_{y\in\R^d}I_h\varphi_h(y),\]
which implies the lower bound in \eqref{e:convmax1}. For the upper bound we need to use \eqref{e:convmax3}. As $I_h\varphi_h(y)$ is a convex combination of the $\varphi_h(x)$ with $|x-y|_\infty
<\frac{hk}{2}$, \eqref{e:convmax3} implies that, on the event $\Ev_M$, we have $\left|\varphi_h(x)-I_h\varphi(x)\right|\le CMh^\alpha$ for any $x\in h\Z^d$.
Therefore, on the event $\Ev_M$, we have
\[\max_{x\in\Omega_h}\varphi_h(x)\le \max_{x\in\Omega_h}I_h\varphi_h(x)+CMh^\alpha\le \max_{y\in\R^d}I_h\varphi_h(y)+CMh^\alpha.\]
Putting these considerations together, we conclude that 
\[\lim_{M\to\infty}\lim_{h\to0}\PP_h\left(\max_{y\in\R^d}I_h\varphi_h(y)\le\max_{x\in\Omega_h}\varphi_h(x)\le \max_{y\in\R^d}I_h\varphi_h(y)+CMh^\alpha\right)=1,\]
which yields \eqref{e:convmax1}.
\end{proof}

\appendix

\section{Technical lemmas}
\label{sec:lemmas}

In this appendix, we provide the proof of several technical results that have been used throughout the paper.

\subsection{Discretization and restriction}

We start by proving that the applications of restricting to $h\Z^d$ and applying  $(-\Delta_h)^{s}$ commute. 

\begin{lemma}[Discretization and restriction]\label{l:commute}
	Let $u\colon\, \R^d\to\R$ be a Schwartz function. Then,  restricting to $h\Z^d$ and applying  $(-\Delta_h)^{s}$ commute: i.e., 
	\[\left((-\Delta_h)^s u\right)\re_{h\Z^d}=(-\Delta_h)^s \left(u\re_{h\Z^d}\right).\]
\end{lemma}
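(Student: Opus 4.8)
The plan is to work entirely in Fourier space, where both sides of the claimed identity become transparent. The key point is that the left-hand side involves first applying the continuous Fourier multiplier $M_h(\xi)^{2s}$ to the Schwartz function $u$ (producing again a nice function, since $M_h(\xi)^{2s}$ is a bounded smooth multiplier up to polynomial growth and $\widehat{u}$ is Schwartz), and then restricting to the lattice; whereas the right-hand side first restricts $u$ to $h\Z^d$ and then applies the discrete operator, whose symbol with respect to $\F_h$ is the \emph{same} function $M_h(\xi)^{2s}$, now regarded as a $\frac{2\pi}{h}$-periodic multiplier. The bridge between the two viewpoints is the Poisson-type summation formula of Lemma \ref{l:disc_cont_FT}, together with the fact that $M_h$ is $\frac{2\pi}{h}$-periodic.

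Concretely, first I would record that for a Schwartz function $u$ the function $g:=(-\Delta_h)^su$ (defined via the continuous multiplier) satisfies $\F[g](\xi)=M_h(\xi)^{2s}\F[u](\xi)$; one should check $g$ is nice enough that Lemma \ref{l:disc_cont_FT} applies to it (it is smooth with all derivatives of at most polynomial growth, and in fact Schwartz up to the mild issue that $M_h(\xi)^{2s}$ is only $C^{\lfloor 2s\rfloor}$ near the zeros of the $\sin$ — but it is smooth and bounded away from a neighborhood of those zeros, and near them $M_h(\xi)^{2s}$ vanishes to high order, so $g$ is still Schwartz; alternatively one notes Lemma \ref{l:disc_cont_FT} only needs enough decay of $\F[g]$, which holds). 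Then by Lemma \ref{l:disc_cont_FT},
\[
\F_h\big[g\re_{h\Z^d}\big](\xi)=\sum_{\zeta\in\frac{2\pi}{h}\Z^d}\F[g](\xi+\zeta)=\sum_{\zeta\in\frac{2\pi}{h}\Z^d}M_h(\xi+\zeta)^{2s}\F[u](\xi+\zeta)=M_h(\xi)^{2s}\sum_{\zeta\in\frac{2\pi}{h}\Z^d}\F[u](\xi+\zeta),
\]
using the $\frac{2\pi}{h}$-periodicity of $M_h$ in the last step. On the other hand, applying Lemma \ref{l:disc_cont_FT} to $u$ itself gives $\F_h[u\re_{h\Z^d}](\xi)=\sum_{\zeta}\F[u](\xi+\zeta)$, so the definition of $(-\Delta_h)^s$ as a discrete operator yields $\F_h\big[(-\Delta_h)^s(u\re_{h\Z^d})\big](\xi)=M_h(\xi)^{2s}\sum_{\zeta}\F[u](\xi+\zeta)$, which is the same expression. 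Since two lattice functions with the same discrete Fourier transform coincide (by discrete Fourier inversion), the identity follows.

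I expect the only real subtlety — and the main obstacle — to be the regularity/decay bookkeeping needed to justify applying the Poisson summation formula to $g=(-\Delta_h)^su$: one must make sure $\F[g]$ decays fast enough for the sum $\sum_\zeta\F[g](\xi+\zeta)$ to converge absolutely and for the Poisson formula to be valid, despite $M_h(\xi)^{2s}$ being only finitely smooth (when $2s\notin 2\N$) at the lattice points $\xi\in\frac{\pi}{h}\Z^d$ where some $\sin(\xi_jh/2)$ vanishes. This is handled by noting that $\F[u]$ is Schwartz and $M_h(\xi)^{2s}$ grows at most polynomially and is continuous, so $\F[g]$ still has rapid decay; the sum converges and the interchange is legitimate. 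Everything else is a direct symbol computation, so the proof is short once this point is dispatched.
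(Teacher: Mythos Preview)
Your approach is essentially the same as the paper's --- use the $\frac{2\pi}{h}$-periodicity of $M_h$ together with the Poisson-type summation formula (Lemma~\ref{l:disc_cont_FT}) --- but there is one organizational difference that matters.

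You apply Lemma~\ref{l:disc_cont_FT} to $g=(-\Delta_h)^s u$ and then to $u$. The paper only applies it to $u$: it starts from the Fourier inversion formula
\[
\big((-\Delta_h)^s u\big)(x)=\frac{1}{(2\pi)^d}\int_{\R^d}e^{i\xi\cdot x}M_h(\xi)^{2s}\F[u](\xi)\dd\xi,
\]
which needs only integrability of $M_h^{2s}\F[u]$ (trivially true, since $M_h$ is bounded and $\F[u]$ is Schwartz), splits $\R^d$ into translates of $(-\frac{\pi}{h},\frac{\pi}{h})^d$, uses that $e^{i\zeta\cdot x}=1$ for $x\in h\Z^d$, $\zeta\in\frac{2\pi}{h}\Z^d$, pulls $M_h(\xi)^{2s}$ out of the $\zeta$-sum by periodicity, and only then invokes Lemma~\ref{l:disc_cont_FT} on $u$.

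The reason this matters is exactly the regularity issue you flagged. Your claim that ``$g$ is still Schwartz'' is not correct: when $2s\notin 2\N$, the multiplier $M_h(\xi)^{2s}$ is only $C^{\lfloor 2s\rfloor}$ at every point of the lattice $\frac{2\pi}{h}\Z^d$ (not just at the origin), so $\F[g]=M_h^{2s}\F[u]$ is only $C^{\lfloor 2s\rfloor}$. This limits the decay of $g$, and for small $s$ (say $2s\le d$) the naive argument does not give $g\in\ell^1(h\Z^d)$. Your fallback ``Lemma~\ref{l:disc_cont_FT} only needs enough decay of $\F[g]$'' is also not quite right: Poisson summation needs control on both sides, and Lemma~\ref{l:disc_cont_FT} is stated only for Schwartz functions. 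The paper's route simply avoids having to check any of this, because it never samples $g$ on the lattice via Poisson summation --- it evaluates $g(x)$ directly from Fourier inversion and rearranges the absolutely convergent integral. If you reorganize your computation in that order, the technical obstacle disappears and your proof becomes the paper's proof.
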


This allows us to be rather careless about when we restrict functions to $h\Z^d$. In fact, we will omit writing $\re_{h\Z^d}$ when (because of Lemma \ref{l:commute}) there is no ambiguity.
\begin{proof}
	The crucial fact here is that $M_h(\xi)$ is $\frac{2\pi}{h}$-periodic. Using this, we compute that, for $x\in h\Z^d$,  
	\begin{align*}
	\left((-\Delta_h)^s u\right)(x)&=\int_{\R^d}M_h(\xi)^{2s}\F[u](\xi)\dd\xi\\
	&=\sum_{\zeta\in\frac{2\pi}{h}\Z^d}\int_{\left(-\frac{\pi}{h},\frac{\pi}{h}\right)^d}M_h(\xi+\zeta)^{2s}\F[u](\xi+\zeta)\dd\xi\\
	&=\int_{\left(-\frac{\pi}{h},\frac{\pi}{h}\right)^d}M_h(\xi)^{2s}\sum_{\zeta\in\frac{2\pi}{h}\Z^d}\F[u](\xi+\zeta)\dd\xi.\\
	\end{align*}
	Using Lemma \ref{l:disc_cont_FT}, we can rewrite this as
	\begin{align*}
	\left((-\Delta_h)^s u\right)(x)&=\int_{\left(-\frac{\pi}{h},\frac{\pi}{h}\right)^d}M_h(\xi)^{2s}\F_h[u](\xi)\dd\xi\\
	&=(-\Delta_h)^s \left(u\re_{h\Z^d}\right),
	\end{align*}
	which is what we wanted to show.
\end{proof}

\subsection{Discrete inequalities}

Let us state the discrete Poincaré inequality that we used in the proof.

\begin{lemma}\label{l:poincare}
	Let $\Omega\subset\R^d$ be a bounded domain with Lipschitz boundary and let $s>0$. Then, there exists a constant $C=C(d,\Omega,s,h_*)>0$ such that, for any $h<h_*$ and any $u_h\colon\,h\Z^d\to\R$ that vanishes outside of $\Omega_h$, we have
	\[\|u_h\|_{L^2_h(\Omega_h)}\le C\|u_h\|_{\dot H^s_h(\Omega_h)}.\]
\end{lemma}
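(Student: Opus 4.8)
The plan is to work entirely on the Fourier side, using that $u_h$ is supported in a fixed box while its discrete Fourier transform is a trigonometric polynomial with a quantitatively controlled gradient, so it cannot be concentrated at low frequencies.

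\textbf{Reduction.} Since $\Omega$ is bounded, fix $R>0$ with $\Omega\subset(-R,R)^d$; then $u_h$ is supported in $\Omega_h\subset(-R,R)^d$ and $h^d\#\Omega_h\le(2R+1)^d$ for $h\le1$. Writing $Q_h:=(-\frac\pi h,\frac\pi h)^d$ and using $\sin t\ge\frac2\pi t$ on $[0,\frac\pi2]$, one checks that $M_h(\xi)^2\ge\frac4{\pi^2}|\xi|^2$, hence $M_h(\xi)^{2s}\ge(\tfrac2\pi)^{2s}|\xi|^{2s}$, for all $\xi\in Q_h$. By Plancherel it therefore suffices to prove the homogeneous inequality
\[\int_{Q_h}|\F_h u_h(\xi)|^2\dd\xi\le C\int_{Q_h}|\xi|^{2s}|\F_h u_h(\xi)|^2\dd\xi\]
with $C$ independent of $h$; this is a Poincaré-type inequality for $F:=\F_h u_h$ restricted to the fundamental domain.

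\textbf{Mechanism.} Differentiating $F(\xi)=h^d\sum_{x\in\Omega_h}u_h(x)\e^{-i\xi\cdot x}$ and applying the Cauchy--Schwarz inequality on the finite set $\Omega_h$ yields, for all $\xi$, the uniform bound $|\nabla_\xi F(\xi)|\le R\sqrt d\,(h^d\#\Omega_h)^{1/2}\|u_h\|_{L^2_h(h\Z^d)}\le A\|u_h\|_{L^2_h(h\Z^d)}$ with $A=A(R,d)$; in particular $|F(\xi)-F(0)|\le A\|u_h\|_{L^2_h}|\xi|$. Now fix a small $\delta\in(0,1]$ (so that $B_\delta:=\{|\xi|<\delta\}\subset Q_h$ for $h\le1$) and split the left-hand side over $B_\delta$ and $Q_h\setminus B_\delta$. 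On $Q_h\setminus B_\delta$ we simply use $1\le\delta^{-2s}|\xi|^{2s}$. On $B_\delta$ we write $|F|^2\le2|F-F(0)|^2+2|F(0)|^2$; the first term is controlled by the gradient bound, and for the second we argue by dichotomy: either $|F(0)|\le2A\|u_h\|_{L^2_h}\delta$, or $|F|\ge\frac12|F(0)|$ on all of $B_\delta$ and then
\[\int_{Q_h}|\xi|^{2s}|F|^2\ge\int_{B_\delta}|\xi|^{2s}|F|^2\ge\tfrac14|F(0)|^2\int_{B_\delta}|\xi|^{2s}\dd\xi=c_{d,s}\,|F(0)|^2\,\delta^{d+2s}.\]
In either case $|F(0)|^2\le CA^2\delta^2\|u_h\|_{L^2_h}^2+C\delta^{-(d+2s)}\int_{Q_h}|\xi|^{2s}|F|^2$.

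\textbf{Absorption, and the main obstacle.} Collecting the estimates and recalling $\int_{Q_h}|F|^2=(2\pi)^d\|u_h\|_{L^2_h}^2$, one obtains
\[(2\pi)^d\|u_h\|_{L^2_h}^2\le C A^2\delta^{d+2}\,\|u_h\|_{L^2_h}^2+C\delta^{-(d+2s)}\int_{Q_h}|\xi|^{2s}|F|^2.\]
Choosing $\delta$ small enough (depending only on $R,d,s$, hence not on $h$) that $CA^2\delta^{d+2}\le\frac12(2\pi)^d$, we absorb the first term on the right and conclude, via the reduction of the first step, that $\|u_h\|_{L^2_h(h\Z^d)}\le C\|u_h\|_{\dot H^s_h(h\Z^d)}$. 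The one point requiring care is precisely the parameter management in this absorption step: every constant and every radius must be pinned down by the \emph{uniform} (in $h$) gradient bound of the second step, which is what makes the argument work simultaneously for all $s>0$, with no need to separate the cases $s<1$ and $s\ge1$ (an alternative for $s\in(0,1)$ would be to use the nonnegativity of the kernel of $(-\Delta_h)^s$ and mimic the classical proof from the Gagliardo seminorm, but that route does not extend cleanly to $s\ge1$).
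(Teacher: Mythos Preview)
Your proof is correct. The overall architecture---split the Plancherel integral into a low-frequency ball $B_\delta$ and its complement, bound the high-frequency part trivially by $\delta^{-2s}\int|\xi|^{2s}|F|^2$, and then absorb a small multiple of $\|u_h\|_{L^2_h}^2$ coming from the low-frequency part---is exactly the strategy the paper uses.

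Where you differ is in the mechanism for controlling the low-frequency piece. The paper simply observes that $|\F_h u_h(\xi)|\le\|u_h\|_{L^1_h(\Omega_h)}\le (h^d\#\Omega_h)^{1/2}\|u_h\|_{L^2_h}$ uniformly in $\xi$, which immediately gives $\int_{B_\varepsilon}|F|^2\le C\varepsilon^d\|u_h\|_{L^2_h}^2$ and the absorption follows. You instead pass through the gradient bound $|\nabla_\xi F|\le A\|u_h\|_{L^2_h}$, split $|F|^2\le 2|F-F(0)|^2+2|F(0)|^2$, and then run a dichotomy on the size of $|F(0)|$. This works, but it is more elaborate than necessary: the very same Cauchy--Schwarz inequality that gives you the gradient bound also gives $|F(0)|\le\|u_h\|_{L^1_h}\le A'\|u_h\|_{L^2_h}$ directly (indeed $|F(\xi)|\le A'\|u_h\|_{L^2_h}$ for \emph{every} $\xi$), which would let you skip the dichotomy entirely. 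One minor bookkeeping point: in your collected estimate the exponent on $\delta$ in the second term should be $-2s$ rather than $-(d+2s)$, since integrating $|F(0)|^2$ over $B_\delta$ contributes an extra factor $\delta^d$; this does not affect the absorption because $\delta\le1$.
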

\begin{proof}
We present a discrete version of the proof in \cite[Theorem 3.7]{zbMATH07383659}. The key idea is to use Plancherel's theorem and split low and high frequencies as follows: 
\begin{align*}
\|u_h\|_{L^2_h(\Omega_h)}^2&=\int_{B_\varepsilon(0)}|\F_h{u}_h(\xi)|^2 \dd \xi+\int_{\left(-\frac{\pi}{h},\frac{\pi}{h}\right)^d \setminus B_\varepsilon(0)}|\F_h{u}_h(\xi)|^2 \dd\xi \\ &\eqqcolon I_1 + I_2,
\end{align*}
where \(\varepsilon>0\) is to be fixed later on. 

\textbf{Step 1.} \emph{Low-frequencies.} 
For the low-frequency part, $I_1$, Hölder's inequality yields
\begin{align*}
|\F_h{u}_h(\xi)| \leq\|u_h\|_{L^1_h(\Omega_h)} \leq|\Omega_h|^{1 / 2}h^{d/2}\|u_h\|_{L^2_h(\Omega_h)},
\end{align*}
where $|\Omega_h|$ denotes the cardinality of $\Omega_h$ and
\begin{align*}
\|u_h\|_{L^p_h(\Omega_h)} \coloneqq  \left(\sum_{x \in \Omega_h }h^d |u_h|^p\right)^{1/p}, \quad p \in [1,+\infty).
\end{align*}
Therefore, we have 
\begin{align*}
I_1 \le \varepsilon^d|B_1(0)||\Omega_h|h^{d}\|u_h\|_{L^2_h(\Omega_h)}^2.
\end{align*}

\textbf{Step 2.} \emph{High-frequencies.} For the high-frequency part, $I_2$, we use that $M_h(\xi)^2\ge c|\varepsilon|^2$ for $\xi\in\left(-\frac{\pi}{h},\frac{\pi}{h}\right)^d \setminus B_\varepsilon(0)$ and compute
\[
\int_{\left(-\frac{\pi}{h},\frac{\pi}{h}\right)^d \setminus B_\varepsilon(0)}|\F_h{u}_h(\xi)|^2 \dd \xi=\int_{\left(-\frac{\pi}{h},\frac{\pi}{h}\right)^d \setminus B_\varepsilon(0)} \frac{M_h(\xi)^{2s}|\F_h{u}_h(\xi)|^2}{M_h(\xi)^{2s}} \dd\xi\leq C\varepsilon^{-2 s}\|(-\Delta_h)^{s / 2} u_h\|_{L^2_h(\R^d)}^2.
\]

\textbf{Step 3.} \emph{Conclusion.}
Choosing \(0<\varepsilon<(|\Omega_h|h^d|B_1(0)|)^{-1 / d}\), we conclude
\begin{align*}
\|u_h\|_{L^2_h(\Omega_h)} \leq \frac{\varepsilon^{-s}}{\sqrt{1-\varepsilon^d|\Omega_h|h^d|B_1(0)|}}\| u_h\|_{\dot H^s_h(\Omega_h)}.
\end{align*}
By considering a square of side $L\ge \mathrm{diam}(\Omega)$ (containing $\Omega_h$), we deduce that $|\Omega_h| \le C\max\left(\frac{1}{h^d},1\right)\le \frac{C}{h^d}$ (as $h<h_*$ by assumption). This means that $|\Omega_h|h^d|B_1(0)|\le C$, and so we can make a choice of $\varepsilon>0$ independent of $h$. This concludes the proof.
\end{proof}

\begin{remark}[Generalized Poincaré inequality] Let \(s \geq t \geq 0\). Arguing as in  \cite[Theorem 1.5]{zbMATH07383659}, Lemma \ref{l:poincare} also implies that, for \(u \in \tilde H^s(\Omega)\), there exists a constant \({c}={c}(d, \Omega, s)>0\) such that
	\begin{align*}
	\|(-\Delta_h)^{t / 2} u_h\|_{L^2_h\left(\Omega_h\right)} \leq {c}\|(-\Delta_h)^{s / 2} u_h\|_{L^2_h(\Omega_h)}.
	\end{align*}
Indeed, 
	\begin{align*}
	\|(-\Delta_h)^{t / 2} u_h\|_{L^2_h(\Omega_h)}=\|u_h\|_{\dot{H}^t_h(\Omega_h)} & \leq\|u_h\|_{H^t_h(\Omega_h)} \leq\|u_h\|_{H^s_h(\Omega_h)} \leq 2^{(s+1)/2}(\|u_h\|_{L^2(\Omega_h)}+\|u_h\|_{\dot{H}^s_h(\Omega_h)}) \\
	& \leq 2^{(s+1)/2}(C\|(-\Delta_h)^{s / 2} u_h\|_{L^2(\Omega_h)}+\|(-\Delta_h)^{s / 2} u_h\|_{L^2(\Omega_h)}) \\
	&\le C\|(-\Delta_h)^{s / 2} u_h\|_{L^2(\Omega_h)}.
	\end{align*}
\end{remark} 

We also used the fact that solutions of the Dirichlet problem for $(-\Delta)^s$ have a little bit of additional regularity.

\begin{lemma}\label{l:higher_reg}
	Let $\Omega\subset\R^d$ be a bounded domain with Lipschitz boundary and let $s\ge0$. Then, there exists \(\kappa_0>0\) with the following property. If \(0 \leq \kappa \leq \kappa_0\), then, for each \(f \in H^{-s+\kappa}(\Omega)\), there exists a unique \(u \in \dot H^{s + \kappa}(\Omega) \) such that \((-\Delta)^s u=f\) in the sense of distributions; moreover, we have the estimate
		\begin{align*}
		\|u\|_{\dot H^{s+\kappa}(\Omega)} \leq C_\kappa\|f\|_{\dot H^{-s+\kappa}(\Omega)}
		\end{align*}
		for a constant \(C_\kappa\) depending only on \(\kappa\).

\end{lemma}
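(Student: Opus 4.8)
The plan is to combine the basic $L^2$-based well-posedness of the Dirichlet problem with an interpolation argument, exploiting the fact that the scale $\dot H^s(\Omega)$ forms an interpolation scale (as recalled in Section \ref{ssec:cont-FGF} via \cite[Theorem 3.5 (iv)]{T02}). First I would recall the base case $\kappa=0$: as noted after the definition of $\dot H^s(\Omega)$, the operator $(-\Delta)^s$ is an isometry from $\dot{\tilde H}^s(\Omega)=\dot H^s(\Omega)$ onto $\dot H^{-s}(\Omega)$, so for every $f\in\dot H^{-s}(\Omega)$ there is a unique $u\in\dot H^s(\Omega)$ with $(-\Delta)^su=f$ and $\|u\|_{\dot H^s(\Omega)}=\|f\|_{\dot H^{-s}(\Omega)}$. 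Call this solution operator $G:=((-\Delta)^s)^{-1}\colon\dot H^{-s}(\Omega)\to\dot H^s(\Omega)$.

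The heart of the matter is to produce a single value $\kappa_0>0$ for which $G$ also maps $\dot H^{-s+\kappa_0}(\Omega)$ boundedly into $\dot H^{s+\kappa_0}(\Omega)$; the intermediate values $0<\kappa<\kappa_0$ then follow by real (or complex) interpolation between this endpoint and the base case $\kappa=0$, using that both $\{\dot H^{-s+\kappa}(\Omega)\}$ and $\{\dot H^{s+\kappa}(\Omega)\}$ are interpolation scales and that $G$ is consistently defined on them. For the endpoint estimate I would invoke the global regularity theory for the Dirichlet problem of the fractional Laplacian on Lipschitz domains: this is exactly the type of statement established in \cite{T02} (via the identification of $\dot{\tilde H}^s(\Omega)$ and $\dot H^{-s}(\Omega)$ with Triebel's spaces $\bar B^s_{2,2}(\Omega)$, already used in the excerpt) together with the mapping properties of $(-\Delta)^s$ on these spaces; alternatively one may cite the higher-differentiability results for the fractional Poisson problem on rough domains (e.g. in the spirit of \cite{BLN22} and the references therein). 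The key point is that there is \emph{some} positive gain of regularity $\kappa_0$ — which on a merely Lipschitz domain can be small and depends on the geometry — but uniqueness for each $\kappa$ is inherited from the base case since $\dot H^{s+\kappa}(\Omega)\hookrightarrow\dot H^s(\Omega)$ and the solution in $\dot H^s(\Omega)$ is already unique.

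I expect the main obstacle to be pinning down the endpoint regularity gain $\kappa_0$ cleanly on a Lipschitz (not smooth) domain: on smooth domains one has the full shift theorem $f\in\dot H^{-s+\kappa}\Rightarrow u\in\dot H^{s+\kappa}$ for all $\kappa\ge0$ up to the usual compatibility thresholds, but on Lipschitz domains the solution operator only gains a limited amount of smoothness, and making this precise in the right function-space scale (and checking that $\dot{\tilde H}^{s+\kappa}(\Omega)$ rather than the larger $\dot H^{s+\kappa}(\Omega)$ is the correct target, so that the zero boundary condition is retained) requires care. Once that single endpoint statement is in hand, the interpolation step and the uniqueness/uniqueness-transfer are routine.
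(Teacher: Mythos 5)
Your proposal is correct in its base case and in its uniqueness transfer, but it takes a genuinely different route from the paper at the crucial step, and that step is where the weight of the argument lies. You propose to first establish a \emph{hard endpoint estimate} (boundedness of the solution operator from $\dot H^{-s+\kappa_0}(\Omega)$ to $\dot H^{s+\kappa_0}(\Omega)$ for one fixed $\kappa_0>0$) and then interpolate down to $0<\kappa<\kappa_0$. The paper instead uses a \emph{soft perturbative argument}: since the spaces $\dot H^{\alpha}(\Omega)$ form a complex interpolation scale and $(-\Delta)^s\colon\dot H^{\alpha}(\Omega)\to\dot H^{\alpha-2s}(\Omega)$ is invertible at $\alpha=s$ (the Lax--Milgram base case), \v{S}ne\u{\i}berg's stability theorem (\cite[Proposition 4.1]{MR961906}) says the set of $\alpha$ for which it is invertible is \emph{open}, so some $\kappa_0>0$ exists automatically. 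No endpoint regularity estimate is needed at all -- that is precisely the point of the paper's argument, and it is the idea your proposal is missing.

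As written, your endpoint step has a gap: \cite{T02} is a paper about function spaces on Lipschitz domains and contains no regularity theory for the fractional Dirichlet problem, so it cannot supply the estimate you need. The citation in the spirit of \cite{BLN22} is the right one -- indeed the paper remarks after the lemma that \cite[Theorem 2.3]{BLN22} gives any $\kappa_0<\tfrac12$ -- but if you invoke that result you get the full lemma directly for every $0\le\kappa\le\kappa_0$, making your interpolation scaffolding redundant; and the paper deliberately avoids this route because the proof in \cite{BLN22} is involved. So either close the endpoint by an honest citation (in which case the rest of your argument collapses to that citation), or replace it by the openness-of-invertibility argument, which derives $\kappa_0>0$ from the $\kappa=0$ case alone.
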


Let us remark that according to \cite[Theorem 2.3]{BLN22} one can take any $\kappa_0<\frac12$ here. The argument, however, is rather complicated; so we prefer to present an easy perturbative argument that gives existence of some $\kappa_0>0$ (which is enough for our purposes).

\begin{proof} We adapt the argument used in \cite[Theorem 3.3]{S20} for the biharmonic operator to the fractional case. 
	
We first show that the claimed estimate holds for $\kappa=0$. To do so, we test the equation with $u$ and deduce  
	\begin{align*}
	\|(-\Delta)^{s/2} u\|_{L^2(\Omega)}^2=\left(u, (-\Delta)^s u\right)_{L^2(\Omega)}=(u, f)_{L^2(\Omega)} \leq\|u\|_{\dot H^{s}(\Omega)}\|f\|_{\dot H^{-s}(\Omega)} .
	\end{align*}
Using Poincaré's inequality, we see that, indeed,
\[\|u\|_{\dot H^s(\Omega)} \leq C_\kappa\|f\|_{\dot H^{-s}(\Omega)}.
\]

To show that we also can take some $\kappa >0$, we use a stability result for analytic families of operators on Banach spaces. The spaces $\dot H^{s}(\Omega)$ form an interpolation family with respect to complex interpolation; thus, by \cite[Proposition 4.1]{MR961906}, the set of those $\alpha$ for which the operator \((-\Delta)^s: \dot H^{\alpha}(\Omega)\rightarrow \dot H^{\alpha-2s}(\Omega)\) has a bounded inverse is open. We have seen that this set contains $s$, so the existence of \(\kappa_0\) as in the statement of the theorem follows.
\end{proof}

\section{Fractional Gaussian Fields via eigenfunctions}
\label{app:alternative}

In this appendix, we present an alternate description of the continuous FGF. As remarked in Section \ref{ssec:cont-FGF}, $(-\Delta)^s$ is an isometry from $\dot H^s(\Omega)$ to $\dot H^{-s}(\Omega)$. Its inverse, restricted to $L^2(\Omega)$, is a positive-definitive compact operator on $L^2(\Omega)$; so, by the spectral theorem, there exists an orthonormal basis $(v_1,v_2,\ldots)$ of $L^2(\Omega)$ consisting of eigenfunctions of $(-\Delta)^s$ with associated eigenvalues $0<\lambda_1\le\lambda_2\le\ldots$. Let $X_j$ be a collection of independent standard Gaussians, and let $\tilde\varphi$ be the random variable
$\tilde\varphi=\sum_{j=1}^\infty\frac{X_j}{\sqrt{\lambda_j}}v_j$.

According to Lemma \ref{l:seriesrepresentation} below, this sum converges almost surely in $\dot H^{s'}(\Omega)\subset \dot H^{s'}(\R^d)$ for any $s'<s-\frac{d}{2}$. Therefore, $\tilde\varphi$ is a well-defined random variable on $\dot H^{s'}(\Omega)\subset \dot H^{s'}(\R^d)$. Every element of $\dot H^{s'}(\R^d)$ induces an element of $\mathcal{S}'(\R^d)$ and so we can think of $\tilde\varphi$ as a random element of $\mathcal{S}'(\R^d)$. Again, according to Lemma \ref{l:seriesrepresentation}, for any $f\in\mathcal{S}(\R^d)$ we have that $(\tilde\varphi,f)$ is a centered Gaussian with variance $\|f\|_{\dot H^{-s}(\Omega)}^2$. This means that $\tilde\varphi$ has the law $\PP$ on $\mathcal{S}'(\R^d)$ and so we can identify $\varphi$ and $\tilde\varphi$.

Let us present the aforementioned lemma.

\begin{lemma}\label{l:seriesrepresentation}
	Let $\Omega\subset\R^d$ be a bounded domain with Lipschitz boundary. Let  $s\ge0$ and $s'<s-\frac d2$ be arbitrary.
	\begin{itemize}
		\item[(i)] The series \[\tilde\varphi\coloneqq \sum_{j=1}^\infty\frac{X_j}{\sqrt{\lambda_j}}v_j\]
		converges almost surely in $\dot H^{s'}(\Omega)$.
		\item[(ii)] For any $f\in\mathcal{S}(\R^d)$, we have \[\E(\tilde\varphi,f)_{L^2(\R^d)}^2= \|f\|_{\dot H^{-s}(\Omega)}^2.\]
	\end{itemize}
\end{lemma}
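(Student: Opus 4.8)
The plan is to verify both claims by explicit computation with the eigenfunction expansion, using the spectral asymptotics of $(-\Delta)^s$ together with the characterization of the $\dot H^{s'}$-norms. First I would record the Weyl-type bound $\lambda_j \gtrsim j^{2s/d}$ for the eigenvalues of $(-\Delta)^s$ on $\Omega$; this follows from the standard Weyl law for the Dirichlet Laplacian (valid on bounded Lipschitz domains) combined with the fact that the eigenvalues of $(-\Delta)^s$ interlace with the $2s$-th powers of suitable Laplacian eigenvalues, or alternatively from a Courant–Fischer min-max argument comparing $\|(-\Delta)^{s/2}\cdot\|_{L^2}$ with $\|(-\Delta_{\mathrm{Dir}})^{s/2}\cdot\|_{L^2}$ on finite-dimensional subspaces. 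I would also need the bound $\|v_j\|_{\dot H^{s'}(\Omega)}^2 \le C\lambda_j^{s'/s}$ for $s'\in[0,s]$, which holds because $v_j$ is an eigenfunction: $\|v_j\|_{\dot H^{s}(\Omega)}^2 = ((-\Delta)^s v_j, v_j)_{L^2} = \lambda_j$ and $\|v_j\|_{L^2}=1$, so interpolating between these two endpoints (using that the $\dot H^\bullet(\Omega)$ spaces form an interpolation scale, as recalled in Section \ref{ssec:cont-FGF}) gives the claim; for $s'<0$ one argues by duality against $\dot{\tilde H}^{-s'}(\Omega)$.

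For part (i), I would show almost-sure convergence in $\dot H^{s'}(\Omega)$ by bounding the tail sums. Since the $v_j$ need not be orthogonal in $\dot H^{s'}(\Omega)$ when $s'\neq 0,s$, I would first treat the endpoint cases $s'\le 0$ and $s'$ close to $s$ where the $v_j/\sqrt{\lambda_j}$ (suitably weighted) are orthogonal, and then fill in the intermediate range by interpolation of the convergence statement; or, more cleanly, I would bound $\mathbb E\big\|\sum_{j=N}^{M}\tfrac{X_j}{\sqrt{\lambda_j}}v_j\big\|_{\dot H^{s'}(\Omega)}^2$ by writing the random function as $(-\Delta)^{(s'-s)/2}$ applied to a series that is orthogonal in $L^2$ (namely $\sum X_j \lambda_j^{(s-s')/(2s)-1/2}\cdot(\text{unit vectors})$ after rescaling), so that the expected squared norm is $\sum_{j=N}^{M}\lambda_j^{s'/s-1} \lesssim \sum_{j=N}^M j^{(2s'/d)-2} $. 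The condition $s'<s-\tfrac d2$ is exactly what makes the exponent $(2s'/d)-2 < -1 - (2/d)\cdot\tfrac d2 \cdot 0$... more precisely $\tfrac{2s'}{d}-2 < \tfrac{2s}{d}-1-2 = \tfrac{2s}{d}-3$; the relevant summability threshold is $\tfrac{2s'}{d}-1 < -\tfrac d2\cdot\tfrac 2d$, i.e. the series $\sum \lambda_j^{s'/s-1}$ converges iff $s'/s - 1 < -d/(2s)$, i.e. $s' < s - d/2$. So the partial sums form a Cauchy sequence in $L^2(\Omega;\dot H^{s'})$, hence converge there; to upgrade to almost-sure convergence I would invoke the Itô–Nisio theorem (or a Borel–Cantelli argument along a dyadic subsequence together with a maximal inequality for Gaussian sums), using that we are dealing with a Gaussian series in a separable Hilbert space. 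The embedding $\dot H^{s'}(\Omega)\hookrightarrow \dot H^{s'}(\R^d)$ is immediate from the definition of $\dot{\tilde H}^{s'}(\Omega)$ (for $s'\ge0$) as a subspace of $\dot H^{s'}(\R^d)$, and by duality for $s'<0$.

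For part (ii), I would compute directly: for $f\in\mathcal S(\R^d)$, $(\tilde\varphi,f) = \sum_j \tfrac{X_j}{\sqrt{\lambda_j}}(v_j,f)_{L^2}$ where $(v_j,f)_{L^2}$ denotes the pairing of $v_j\in L^2(\Omega)$ against the restriction of $f$ to $\Omega$. This is a convergent Gaussian series (the coefficients are square-summable since $\sum_j \lambda_j^{-1}(v_j,f)^2 = \|(-\Delta)^{-s/2}f\|_{L^2(\Omega)}^2 = \|f\|_{\dot H^{-s}(\Omega)}^2 < \infty$, using that $(v_j)$ is an orthonormal basis of $L^2(\Omega)$ diagonalizing $(-\Delta)^{-s}$), and its variance is precisely $\sum_j \tfrac{1}{\lambda_j}(v_j,f)^2 = \|f\|_{\dot H^{-s}(\Omega)}^2$. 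One small point requiring care is to check that the $\dot H^{s'}(\R^d)$-valued sum, paired against $f$, agrees with this scalar series — this follows since pairing against a fixed $f\in\mathcal S(\R^d)$ is a continuous linear functional on $\dot H^{s'}(\R^d)$, so it commutes with the limit. The main obstacle is part (i): getting the almost-sure (not merely $L^2$) convergence in the non-Hilbert-orthogonal norm $\dot H^{s'}(\Omega)$ cleanly, which is why I would lean on the interpolation-scale structure of $\{\dot H^\bullet(\Omega)\}$ to reduce to an orthogonal series and then on a standard Gaussian-series convergence theorem; the eigenvalue asymptotics on a Lipschitz domain is classical but should be cited carefully.
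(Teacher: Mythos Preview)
Your proposal is essentially the same approach as the paper's: Weyl asymptotics $\lambda_j\asymp j^{2s/d}$, the interpolation bound $\|v_j\|_{\dot H^{s'}(\Omega)}^2\le C\lambda_j^{s'/s}$, summability of $\sum_j\lambda_j^{s'/s-1}$ under $s'<s-\tfrac d2$, and a direct spectral computation for part (ii). The paper packages the eigenfunction bound as a separate lemma (Lemma \ref{l:regularity_eigenfunction}) and invokes the Hilbert-space Kolmogorov two-series theorem rather than It\^o--Nisio, but these are interchangeable here.

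One correction: your concern that the $v_j$ are not orthogonal in $\dot H^{s'}(\Omega)$ is a red herring, and your proposed workaround via $(-\Delta)^{(s'-s)/2}$ would not work as stated (the $v_j$ are eigenfunctions of $(-\Delta)^s$ with the given boundary conditions, not of $(-\Delta)^{s'}$, so that operator does not map them to scalar multiples of an orthonormal system). Fortunately no such device is needed: since the $X_j$ are independent with mean zero, the summands $Y_j=\tfrac{X_j}{\sqrt{\lambda_j}}v_j$ satisfy $\E\langle Y_j,Y_k\rangle_{\dot H^{s'}}=\E[X_jX_k]\cdot\tfrac{1}{\sqrt{\lambda_j\lambda_k}}\langle v_j,v_k\rangle_{\dot H^{s'}}=0$ for $j\neq k$, so
\[
\E\Big\|\sum_{j=N}^M Y_j\Big\|_{\dot H^{s'}}^2=\sum_{j=N}^M \E\|Y_j\|_{\dot H^{s'}}^2=\sum_{j=N}^M\frac{1}{\lambda_j}\|v_j\|_{\dot H^{s'}}^2\le C\sum_{j=N}^M\lambda_j^{s'/s-1},
\]
and the rest of your argument goes through cleanly. (Your arithmetic in the middle of that paragraph wanders before landing on the correct summability condition; it may be worth rewriting that passage.)
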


For the proof, we need a sharp estimate on the eigenfunction expansion of a function.

\begin{lemma}\label{l:regularity_eigenfunction}
	Let $\Omega\subset\R^d$ be a bounded domain with Lipschitz boundary. Let $s\ge0$ and $s'\le s$ be arbitrary. Then, for any $f\in \dot H^{s'}(\Omega)$, we have
	\begin{equation}\label{e:regularity_eigenfunction}
	\|f\|_{\dot H^{s'}(\Omega)}^2\le C\sum_{j=1}^\infty \lambda_j^{s'/s}(f,v_j)_{L^2(\R^d)}^2.
	\end{equation}
\end{lemma}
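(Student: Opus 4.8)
The strategy is interpolation: both sides of \eqref{e:regularity_eigenfunction} are squares of norms in complex interpolation scales that agree at the two endpoints $s'=0$ and $s'=s$, hence are comparable in between. For $\sigma\in[0,s]$, I would let $\mathcal H^\sigma$ be the Hilbert space of those $f$ for which $\|f\|_{\mathcal H^\sigma}^2:=\sum_{j=1}^\infty\lambda_j^{\sigma/s}(f,v_j)_{L^2(\R^d)}^2$ is finite. Since $\lambda_j\ge\lambda_1>0$, elements of $\mathcal H^\sigma$ lie in $L^2(\Omega)$, and the coefficient map $f\mapsto((f,v_j)_{L^2})_j$ is an isometric isomorphism of $\mathcal H^\sigma$ onto the weighted sequence space $\ell^2(\lambda_j^{\sigma/s})$; equivalently, $\mathcal H^\sigma=D(A^{\sigma/(2s)})$, where $A$ is the positive self-adjoint operator on $L^2(\Omega)$ inverse to the compact operator of Appendix \ref{app:alternative}. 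I will prove \eqref{e:regularity_eigenfunction} for $0\le s'\le s$; the range $s'<0$, should it be needed, is treated in the same way by interpolating the pair $(\dot H^{-s}(\Omega),L^2(\Omega))$, using that the construction of Appendix \ref{app:alternative} also gives $\|f\|_{\dot H^{-s}(\Omega)}^2=\sum_j\lambda_j^{-1}(f,v_j)_{L^2}^2$.

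\emph{Step 1 (endpoint identities).} At $\sigma=0$, Parseval's identity for the $L^2(\Omega)$-orthonormal basis $(v_j)$ gives $\|f\|_{\dot H^0(\Omega)}^2=\|f\|_{L^2(\Omega)}^2=\|f\|_{\mathcal H^0}^2$. At $\sigma=s$, one first checks that each $v_j$ lies in $\dot H^s(\Omega)=\dot{\tilde H}^s(\Omega)$, because $v_j=\lambda_j G v_j$ and $G$ maps $L^2(\Omega)\subset\dot H^{-s}(\Omega)$ into $\dot{\tilde H}^s(\Omega)$. The weak eigenvalue equation $\langle v_j,w\rangle_{\dot H^s(\R^d)}=\lambda_j(v_j,w)_{L^2}$, valid for every $w\in\dot{\tilde H}^s(\Omega)$, then yields $\langle v_i,v_j\rangle_{\dot H^s(\R^d)}=\lambda_i\delta_{ij}$, so $\{v_j/\sqrt{\lambda_j}\}_j$ is an orthonormal system in $\dot H^s(\Omega)$. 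It is complete: any $w\in\dot H^s(\Omega)$ that is $\dot H^s$-orthogonal to all $v_j$ is, by the same equation, $L^2$-orthogonal to all $v_j$, hence $w=0$. Since the $\dot H^s$-Fourier coefficient of $f\in\dot H^s(\Omega)$ with respect to $v_j/\sqrt{\lambda_j}$ equals $\sqrt{\lambda_j}(f,v_j)_{L^2}$, Parseval gives $\|f\|_{\dot H^s(\Omega)}^2=\sum_j\lambda_j(f,v_j)_{L^2}^2=\|f\|_{\mathcal H^s}^2$. Thus the identity map is an isometry both from $\mathcal H^0$ onto $\dot H^0(\Omega)$ and from $\mathcal H^s$ onto $\dot H^s(\Omega)$.

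\emph{Step 2 (interpolation).} Under the coefficient identification, $[\mathcal H^0,\mathcal H^s]_\theta=[\ell^2,\ell^2(\lambda_j)]_\theta=\ell^2(\lambda_j^\theta)=\mathcal H^{\theta s}$, which is the elementary complex interpolation of weighted $\ell^2$-spaces. On the other hand, the scale $\{\dot H^\sigma(\Omega)\}_{\sigma\ge0}$ is a complex interpolation scale with $[\dot H^0(\Omega),\dot H^s(\Omega)]_\theta=\dot H^{\theta s}(\Omega)$ (equivalent norms), as recalled in Section \ref{ssec:cont-FGF} on the basis of \cite[Theorem 3.5 (iv)]{T02}. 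Applying complex interpolation to the two isometries of Step 1 shows that the identity map $\mathcal H^{s'}=[\mathcal H^0,\mathcal H^s]_{s'/s}\to[\dot H^0(\Omega),\dot H^s(\Omega)]_{s'/s}=\dot H^{s'}(\Omega)$ is bounded, with operator norm controlled by the equivalence constant in the Triebel identity; squaring the resulting bound is precisely \eqref{e:regularity_eigenfunction}.

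\emph{Expected main obstacle.} Everything reduces to the $\sigma=s$ endpoint in Step 1, i.e. to the claim that $\{v_j/\sqrt{\lambda_j}\}_j$ is a \emph{complete} orthonormal system in $\dot H^s(\Omega)=\dot{\tilde H}^s(\Omega)$. This rests on the Gelfand-triple structure $\dot{\tilde H}^s(\Omega)\subset L^2(\Omega)\subset\dot H^{-s}(\Omega)$ and on $v_j\in\dot{\tilde H}^s(\Omega)$, and one has to be a little careful that $\dot H^s(\Omega)$ throughout means the Lions–Magenes space (as fixed in Section \ref{ssec:cont-FGF}), which is the space for which both Triebel's interpolation identity and the spectral identity of Step 1 hold. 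Once the two endpoints are pinned down, the interpolation argument in Step 2 is entirely routine.
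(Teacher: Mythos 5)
Your argument for $0\le s'\le s$ is essentially the paper's: both proofs pin down the two endpoints $s'=0$ and $s'=s$, where the spectral quantity $\sum_j\lambda_j^{s'/s}(f,v_j)_{L^2}^2$ and $\|f\|_{\dot H^{s'}(\Omega)}^2$ coincide, and then invoke the fact that both scales interpolate. The paper phrases the spectral side via the interpolation theory of fractional powers of self-adjoint operators (citing Triebel), while you carry it out by hand through the coefficient map onto weighted $\ell^2$; this is the same substance, and your explicit verification of the $\sigma=s$ endpoint (completeness of $\{v_j/\sqrt{\lambda_j}\}$ in $\dot{\tilde H}^s(\Omega)$) is correct. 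Your treatment of $-s\le s'\le 0$ by duality, or by interpolating $(\dot H^{-s}(\Omega),L^2(\Omega))$, likewise matches the paper.

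The genuine gap is the range $s'<-s$, which your proposal never reaches: interpolating the couple $(\dot H^{-s}(\Omega),L^2(\Omega))$ (equivalently, dualizing $(L^2(\Omega),\dot H^s(\Omega))$) only produces exponents in $[-s,0]$, and none of the endpoint identifications you establish sit below $-s$, so no interpolation between them can get there. This case is not vacuous: the lemma is invoked in Lemma \ref{l:seriesrepresentation} with $s'$ just below $s-\frac d2$, which is $<-s$ as soon as $d>4s$. The paper closes this range by induction on $k$: assuming \eqref{e:regularity_eigenfunction} for $s'\ge-(2k-1)s$, for $s'\in[-(2k+1)s,-(2k-1)s]$ one lets $u$ solve $(-\Delta)^su=f$ in $\Omega$ with $u=0$ outside, takes $g=(-\Delta)^su$ as a competitor in the infimum defining $\|f\|_{\dot H^{s'}(\Omega)}$, uses that the multiplier $|\xi|^{2s}$ maps $\dot H^{s'+2s}(\R^d)$ boundedly into $\dot H^{s'}(\R^d)$, and then applies the induction hypothesis to $u$ at regularity $s'+2s$ together with $(f,v_j)_{L^2}=\lambda_j(u,v_j)_{L^2}$ to convert back to coefficients of $f$. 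You would need to add such a bootstrapping step (or an equivalent device) to prove the lemma for all $s'\le s$ as stated.
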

We remark that we make no claim about the $\dot H^{s'}$-regularity for $s'>s$.
\begin{proof}
	For $s'\in\{-s,0,s\}$ the estimate \eqref{e:regularity_eigenfunction} follows directly from the definition. 
We next claim that \eqref{e:regularity_eigenfunction} holds whenever $-s\le s'\le s$. To see this, we adapt the argument in \cite[Corollary 1]{MR3246044}. Namely, we  take first $0<s'<s$, consider $(-\Delta)^s$ restricted to functions in $\dot H^s(\Omega)$, and let $((-\Delta)^s)_N^{s'/s}$ be its (spectral) $\frac{s'}{s}$-th power. Explicitly,
\[((-\Delta)^s)_N^{s'/s}f=\sum_{j=1}^\infty\lambda_j^{s'/s}(f,v_j)_{L^2(\R^d)}	v_j\]
We note that, if we define $\dot{\mathcal{H}}^{s'}(\Omega)$ to be the space of functions in $L^2(\Omega)$ such that this quantity is finite, then the domain of $((-\Delta)^s)_N^{s'/s}$ is exactly $\dot{\mathcal{H}}^{s'}(\Omega)$. According to the theory of interpolation of fractional powers of self-adjoint operators (see, e.g., \cite[Section 1.18.10]{T78}), the Hilbert spaces $\dot{\mathcal{H}}^{s'}(\Omega)$ form an interpolation scale. However, we know that the same holds true for the Hilbert spaces $\dot H^{s'}(\Omega)$, and moreover $\dot{\mathcal{H}}^{s'}(\Omega)=\dot H^{s'}(\Omega)$ (with equivalent norms) for $s'\in\{0,s\}$, and so we have actually have this equality for any $s'$ with $0\le s'\le s$. So, for $0\le s'\le s$, there exists some $C>0$ such that  
\[\frac1C\sum_{j=1}^\infty \lambda_j^{s'/s}(f,v_j)_{L^2(\R^d)}^2\le \|f\|_{\dot H^{s'}(\Omega)}^2\le C\sum_{j=1}^\infty \lambda_j^{s'/s}(f,v_j)_{L^2(\R^d)}^2\]
By duality, the same holds true for $-s\le s'\le0$. Putting these considerations together, we obtain a statement even stronger than \eqref{e:regularity_eigenfunction}.

It remains to study the case that $s'<-s$. We proceed inductively. Let us suppose that we know that \eqref{e:regularity_eigenfunction} holds for $s'\ge-(2k-1)s$, for some $k\in\N$, and let us consider some $s'$ with $-(2k+1)s\le s'\le -(2k-1)s$. We have that
	\[\|f\|_{\dot H^{s'}(\Omega)}^2=\inf_{\substack{g\in \dot H^{s'}(\R^d)\\f=g \text{ in }\Omega}}\|g\|_{\dot H^{s'}(\R^d)}^2.\]
	Let $u$ be such that
	\[\begin{cases}
	(-\Delta)^s u(x)=f(x), &x \in \Omega,\\
	u(x)=0, & x \in \R^d\setminus\Omega.
	\end{cases}	\]
	We can choose $g=(-\Delta)^s u$ and obtain, using the induction hypothesis, that
	\begin{align*}
	\|f\|_{\dot H^{s'}(\Omega)}^2&\le \|(-\Delta)^s u\|_{\dot H^{s'}(\R^d)}^2\\
	&\le \|u\|_{\dot H^{s'+2s}(\R^d)}^2\\
	&\le C\sum_{j=1}^\infty \lambda_j^{(s'+2s)/s}(u,v_j)_{L^2(\R^d)}^2\\
	&=C\sum_{j=1}^\infty \lambda_j^{(s'+2s)/s}\left(u,\frac{(-\Delta)^sv_j}{\lambda_j}\right)_{L^2(\R^d)}^2\\
	&=C\sum_{j=1}^\infty \lambda_j^{s'/s}\left((-\Delta)^su,v_j\right)_{L^2(\R^d)}^2\\
	&=C\sum_{j=1}^\infty \lambda_j^{s'/s}\left(f,v_j\right)_{L^2(\R^d)}^2.
	\end{align*}
	This completes the induction step.
	
\end{proof}

\begin{proof}[Proof of Lemma \ref{l:seriesrepresentation}]
	\emph{Claim (i).} By the Hilbert-space-valued version of Kolmogorov's two series theorem (see e.g. \cite[Corollary on p. 386]{GS04}), the series \[\sum_{j=1}^\infty\frac{X_j}{\sqrt{\lambda_j}}v_j\]
	converges almost surely in $\dot H^{s'}(\Omega)$ if
	\[\sum_{j=1}^\infty\left\|\frac{1}{\sqrt{\lambda_j}}u_j\right\|_{\dot H^{s'}(\Omega)}^2<\infty.\]
	From Lemma \ref{l:regularity_eigenfunction}, we know, in particular, that
	\[\|v_j\|_{\dot H^{s'}(\Omega)}^2\le \lambda_j^{s'/s}.\]
	Moreover, by Weyl's law for the operator $(-\Delta)^s$ restricted to $\dot H^s(\Omega)$ (as follows, e.g., from the main result of \cite{G14}), we have that
	\[\lambda_j\asymp j^{2s/d}.\]
	Therefore,
	\begin{align*}
	\sum_{j=1}^\infty\left\|\frac{1}{\sqrt{\lambda_j}}v_j\right\|_{\dot H^{s'}(\Omega)}^2&\le\sum_{j=1}^\infty\lambda_j^{s'/s-1}\\
	&\le\sum_{j=1}^\infty (cj)^{2s/d\cdot (s'/s-1)}\le C\sum_{j=1}^\infty j^{2(s'-s)/d},
	\end{align*}
	and this sum is indeed convergent if $s'-s<-\frac{d}{2}$. 
	
	\emph{Claim (ii).} Let $f\in\mathcal{S}(\R^d)$. The functions $v_j$ are by definition orthonormal in $L^2(\Omega)$, and the $X_j$ are independent. So we can calculate that
	\begin{align*}
	\E(\tilde\varphi,f)_{L^2(\R^d)}^2&=\E\left(\sum_{j=1}^\infty\frac{X_j}{\sqrt{\lambda_j}}(v_j,f)_{L^2(\R^d)}\right)^2\\
	&=\sum_{j=1}^\infty\frac{1}{\lambda_j}(v_j,f)_{L^2(\R^d)}^2=\left(f,\sum_{j=1}^\infty\frac{1}{\lambda_j}(v_j,f)_{L^2(\R^d)}v_j\right)\\
	&=\left(f,(-\Delta)^{-s}f\right)_{L^2(\R^d)}=\|f\|_{H^{-s}(\Omega)}^2.
	\end{align*}
\end{proof}

\vspace{5mm}
\section*{Acknowledgments}

N.~De Nitti is a member of the Gruppo Nazionale per l'Analisi Matematica, la Probabilit\`a e le loro Applicazioni (GNAMPA) of the Istituto Nazionale di Alta Matematica (INdAM).  He has been supported by the Alexander von Humboldt Foundation and by the TRR-154 project of the Deutsche Forschungsgemeinschaft (DFG, German Research Foundation).  During the revision of the manuscript, he has been supported by the Swiss State Secretariat for Education, Research and Innovation (SERI) under contract number M822.00034 through the project TENSE.

F.~Schweiger has been supported by the Foreign Postdoctoral Fellowship Program of the Israel Academy of Sciences and Humanities, and partially by ISF grant No. 421/20.

We thank O.~Zeitouni and E.~Zuazua for helpful comments on the topic of this work and W.~Ruszel for pointing out related references. We also acknowledge the anonymous referees for a careful reading of the manuscript.

\vspace{5mm}

\bibliographystyle{abbrv}
\bibliography{FGF-ref}

\vfill 

\end{document}